\newtheorem{theorem}{Theorem}[section]
\newtheorem{lemma}[theorem]{Lemma}
\newtheorem{proposition}[theorem]{Proposition}
\newtheorem{remark}[theorem]{Remark}
\newtheorem{definition}[theorem]{Definition}
\numberwithin{equation}{section}
\def\blfootnote{\xdef\@thefnmark{}\@footnotetext}
\def\m{\mathbb}		\def\mcal{\mathcal}		\def\mb{\mathbbm}		
\def\lam{\lambda}  \def\eps{\epsilon}  
\def\a{\alpha}	\def\b{\beta}	\def\ls{\lesssim}
\def\p{\partial}  
\def\wh{\widehat}	\def\la{\langle}	\def\ra{\rangle}
\def\ls{\lesssim}	\def\gs{\gtrsim}
\def\i{\int\limits}
\def\be{\begin{equation}}     \def\ee{\end{equation}}
\def\bp{\begin{pmatrix}}	\def\ep{\end{pmatrix}}
\begin{document}
	\title{Local Well-posedness of the  Coupled KdV-KdV Systems on $\mathbb{R}$}
	\author[]{Xin Yang and Bing-Yu Zhang }
	\date{}
	\maketitle
	
	\begin{abstract}
		Inspired by  the recent  successful completion of the study of the well-posedness theory for the Cauchy problem of the Korteweg-de Vries (KdV) equation
		\[ u_t +uu_x +u_{xxx}=0, \quad  \left. u \right |_{t=0}=u_{0} \]
		in the space $H^{s} (\mathbb{R})$ (or $H^{s} (\mathbb{T})$),   we study   the well-posedness of the Cauchy problem for a class of coupled KdV-KdV (cKdV) systems 
		\[\left\{\begin{array}{rcl}
			u_t+a_{1}u_{xxx} &=& c_{11}uu_x+c_{12}vv_x+d_{11}u_{x}v+d_{12}uv_{x},\vspace{0.03in}\\
			v_t+a_{2}v_{xxx}&=& c_{21}uu_x+c_{22}vv_x +d_{21}u_{x}v+d_{22}uv_{x}, \vspace{0.03in}\\
			\left. (u,v)\right |_{t=0} &=& (u_{0},v_{0})
		\end{array}\right.\]
		in the space $\mathcal{H}^s (\mathbb{R}) := H^s (\mathbb{R})\times H^s (\mathbb{R})$. Typical examples include the Gear-Grimshaw  system, the Hirota-Satsuma system and the Majda-Biello system, to name a few. They  usually serve as models
		to describe the interaction of two long waves with different dispersion relations.
		
		In this paper we look for those values of  $s\in \mathbb{R}$ for which the cKdV systems are well-posed in $\mathcal{H}^s (\m{R})$. Our findings enable us  to provide a  complete classification for the cKdV systems in terms of the analytical well-posedness in  $\mathcal{H}^s (\mathbb{R})$  based on its coefficients $a_i$, $c_{ij}$ and $d_{ij}$ for $i,j=1,2$. The key ingredients in the proofs are the bilinear estimates in both divergence and non-divergence forms under the Fourier restriction  space norms. There are four types of the bilinear estimates that need to be investigated. Sharp results  are established  for  all of them.  In contrast to the lone critical index $-\frac{3}{4}$ for the single KdV equation, the critical indexes for the cKdV systems are $-\frac{13}{12}$, $-\frac{3}{4}$, $0$ and $\frac{3}{4}$. 
		
		As a result, the cKdV systems are classified into four classes, each of which corresponds to a unique index $s^{*}\in\{-\frac{13}{12},\,-\frac{3}{4},\,0,\,\frac{3}{4}\}$ such that any system in this class is locally analytically well-posed if $s>s^{*}$ while the bilinear estimate fails if $s<s^{*}$.
	\end{abstract}
	
	\blfootnote{2010 Mathematics Subject Classification. 35Q53; 35E15; 35G55; 35L56; 35D30.}
	
	\blfootnote{Key words and phrases. KdV-KdV systems; Gear-Grimshaw system;  Hirota-Satsuma system; Majda-Biello system; Local well-posedness; Fourier restriction space; Bilinear estimates.}
	
	
	
	\section{Introduction}  
	\label{Sec, intro}
	
	\subsection{Problem to study}
	\label{Subsec, prob}\quad
	This paper studies   the  Cauchy problem of  a  class of coupled KdV-KdV systems  posed on the whole line   $\m{R}$ of the following general form, 
	\be\label{ckdv, general}
	\left\{\begin{array}{ll}
		\bp u_t\\v_t  \ep + A_{1}\bp u_{xxx}\\v _{xxx}\ep + A_{2}\bp u_x\\v_x \ep = A_{3}\bp uu_x\\vv_x \ep + A_{4}\bp u_{x}v\\uv_{x}\ep, & x\in\m{R}, t\in\m{R}, \vspace{0.1in}\\
		\left.  \bp u\\v\ep\right |_{t=0} =\bp u_{0}\\v_{0}\ep,
	\end{array}\right.\ee
	where $\{A_{i}\}_{1\leq i\leq 4}$ are $2\times 2$ real  constant matrices,   $u=u(x,t)$,  $ 
	v =v(x,t) $ are real-valued unknown  functions of the two real variables  $x$ and $t$, and subscripts adorning $u$ and $v$ connote partial differentiations $\partial _t$ or $\partial _x$.  It is assumed  that  there exists an invertible real matrix $M$ such that $$A_{1}=M\bp a_{1}& 0\\0& a_{2}\ep M^{-1}, $$with  $a_{1}a_2 \ne 0$.  By regarding $M^{-1}\bp u\\v\ep$ as the new unknown functions (still denoted by $u$ and $v$),  the system (\ref{ckdv, general})   can be rewritten in  the following form,
	\be\label{ckdv, coef form}
	\left\{\begin{array}{rcl}
		u_t+a_{1}u_{xxx}+b_{11}u_x &=& -b_{12}v_x+c_{11}uu_x+c_{12}vv_x+d_{11}u_{x}v+d_{12}uv_{x},\vspace{0.03in}\\
		v_t+a_{2}v_{xxx}+b_{22}v_x &=& -b_{21}u_x+c_{21}uu_x+c_{22}vv_x +d_{21}u_{x}v+d_{22}uv_{x}, \vspace{0.03in}\\
		\left. (u,v)\right |_{t=0} &=& (u_{0},v_{0}).
	\end{array}\right.\ee
	This system   is called in {\em divergence form}  if $d_{11}=d_{12}$ and $d_{21}=d_{22}$.  Otherwise, it is called in {\em non-divergence form}.  
	
	Listed below  are  a few specializations of (\ref{ckdv, general})   appeared in the literature.
	\begin{itemize}
		\item Majda-Biello system: 
		\be\label{M-B system}
		\left\{\begin{array}{rll}
			u_{t}+u_{xxx} &=& -vv_{x}, \\
			v_{t}+a_{2}v_{xxx} &=& -(uv)_{x}, \\
			\left. (u,v)\right |_{t=0} &=& (u_{0},v_{0}),
		\end{array}\right.\ee
		where $a_{2}\neq 0$. This system was  proposed by Majda and Biello in \cite{MB03} as a reduced asymptotic model to study the nonlinear resonant interactions of long wavelength equatorial Rossby waves and barotropic Rossby waves.
		
		\item Hirota-Satsuma system: 
		\be\label{H-S system}
		\left\{\begin{array}{rll}
			u_{t}+ a_{1}u_{xxx} &=& -6a_{1}uu_{x}+c_{12}vv_{x},   \vspace{0.03in}\\
			v_{t}+v_{xxx} &=& -3uv_{x}, \vspace{0.03in}\\
			\left. (u,v)\right |_{t=0} &=& (u_{0},v_{0}),
		\end{array}\right.\ee
		where $a_{1}\neq 0$. This system was proposed by Hirota-Satsuma in \cite{HS81} to describe the interaction of two long waves with different dispersion relations.
		
		\item Gear-Grimshaw system: 
		\be\label{G-G system}
		\left\{\begin{array}{rcl}
			u_{t}+u_{xxx}+\sigma_{3}v_{xxx} &=&-uu_{x}+\sigma_{1}vv_{x}+\sigma_{2}(uv)_{x},\vspace{0.03in}\\
			\rho_{1}v_{t}+\rho_{2}\sigma_{3}u_{xxx}+v_{xxx}+\sigma_{4}v_{x} &=& \rho_{2}\sigma_{2}uu_{x}-vv_{x}+\rho_{2}\sigma_{1}(uv)_{x}, \vspace{0.03in}\\
			\left. (u,v)\right |_{t=0} &=& (u_{0},v_{0}),
		\end{array}\right.\ee
		where $\sigma_{i}\in\m{R}(1\leq i\leq 4)$ and $\rho_{1},\,\rho_{2}>0$. This system is a special case of (\ref{ckdv, general}) by setting 
		\be\label{A1 in G-G system}
		A_{1}=\bp 1& \sigma_{3}\\ \frac{\rho_{2}\sigma_{3}}{\rho_{1}} & \frac{1}{\rho_{1}} \ep.\ee
		Note that $A_{1}$ in (\ref{A1 in G-G system}) is diagonalizable over $\m{R}$ for any $\sigma_{3}\in\m{R}$ and $\rho_{1},\,\rho_{2}>0$. Moreover, the eigenvalues of $A_{1}$ are nonzero unless $\rho_{2}\sigma_{3}^{2}=1$. So (\ref{G-G system}) can be reduced to the form (\ref{ckdv, coef form}) as long as $\rho_{2}\sigma_{3}^{2}\neq 1$. This system was derived by Gear-Grimshaw in \cite{GG84} (also see \cite{BPST92} for the explanation about the physical context) as a model to describe the strong interaction of two-dimensional, weakly nonlinear, long, internal gravity waves propagating on neighboring pycnoclines in a stratified fluid, where the two waves correspond to different modes. 
	\end{itemize}

	In this paper  we study the well-posedness of the Cauchy problem (\ref{ckdv, coef form}) in the space $$H^s (\m{R})\times H^s(\m{R})\triangleq \mathcal{H}^{s}(\m{R}).$$
	The Cauchy problem  (\ref{ckdv, coef form})  can be viewed as a special  example of the following abstract Cauchy problem,
	\begin{equation} \label{x-1}
		\frac{dw}{dt} +L w = N(w) , \qquad w(0)= \phi, 
	\end{equation}
	where $L$  is a linear operator, $N $ is a possibly time-dependent nonlinear operator
	and the initial datum $\phi $  belongs to a Banach space $\m{X}_s $ with index $s \in\m{R}$. The scale of Banach spaces $\m{X}_s $ has the property $\m{X}_{s_2}\subset \m{X}_{s_1}$ if  $s_1 \leq  s_2$. The well-posedness considered in this paper is understood in the following sense.
	
	\begin{definition} \label{def2}  The Cauchy problem  (\ref{x-1}) is said to be well-posed in the
		space $\m{X}_s$  if for any $ \delta  > 0 $ there is a  $T = T(\delta ) > 0 $ such that
		\begin{itemize}
			\item[(a)]  for any $\phi \in \m{X}_s$  with $ \|\phi\|_{\m{X}_s} \leq \delta $, (\ref{x-1})  admits exactly one solution $w$
			in the space  $C([0,T]; \m{X}_s) $  satisfying the auxiliary condition
			\begin{equation} \label{x-2} w\in \m{Y}^T_s  \end{equation}
			where  $\m{Y}^T_s$  is an auxiliary metric space;
			\item[(b)] the solution $w$ depends continuously on its initial data $\phi$  in the sense
			that the mapping $\phi \to  u $ is continuous from $\{\phi :\|\phi\|_{\m{X}_s} \leq \delta \}$ to $C([0, T];\m{X}_s)$.
		\end{itemize}
	\end{definition}
	
	The well-posedness described by Definition \ref{def2} is local in character since the time $T$ depends on $\delta $. If  $T$ can be specified independently of $\delta $ in Definition \ref{def2}, then (\ref{x-1}) is said to be globally well-posed in the space $\m{X} _s$. On the other hand, the Cauchy problem (\ref{x-1})  is said to be {\it  (locally) uniformly well-posed}, $C^k $-well-posed ($k\geq 0$), or {\it analytically} well-posed in the space $\m{X}_s$  if the corresponding  solution map  is (locally) uniform continuous, $C^k$ or real analytic. 
	
	In this paper, we are looking for an answer to the following  problem.
	
	\noindent
	{\bf Problem}:  
	{\em For what values of  $s\in \m{R}$  is  the Cauchy problem  (\ref{ckdv, coef form})
		well-posed  in  the space $\mathcal{H}^{s}(\m{R})$?}
	

	\subsection{Literature review}
	\label{Subsec, literature}
	\quad
	It is beneficial and instructive to  the study of  the Cauchy problem (\ref{ckdv, coef form}) by first reviewing the well-posedness of the  Cauchy  problem of the single KdV equation
	
	\be\label{KdV eq}
	u_t+\ uu_x +u_{xxx} = 0, \qquad u(x,0)= u_0 (x) 
	\ee 
	posed either on  the whole  real  line $\m{R}$ or on a periodic domain  $\m{T}$.
	The study began  in   the late 1960s  with the work of Sj\"oberg  \cite{Sjo67,Sjo70}  and has come to a happy end with the  work of Killip and Visan\cite{KV19}.  
	Looking back, this  study, which has lasted more than half  a century,  can be divided into four stages with  four different  major approaches  developed  in the process.      
	
	In Stage 1,    (\ref{KdV eq}) was  most studied  using   traditionally PDE  and functional analysis techniques.  Sj\"oberg  \cite{Sjo67,Sjo70} and Temam \cite{Tem69}  (see \cite{Kam69, TMI70, TM71, ST76} and the references therein for some  other works followed)  obtained the existence and uniqueness of solutions of (\ref{KdV eq})  on $\m{T}$ 
	in the space $L^{\infty} (0, T; H^3 (\m{T})) $ (instead of in the space $C([0,T]; H^3 (\m{T}))$), 
	but without  showing  the continuity  of the associated solution map.
	The first well-posedness result was due to  Bona and Smith \cite{BS75} who showed that (\ref{KdV eq})  is (globally) well-posed in the space $H^k (\m{R})$  or $H^k (\m{T})$ for any integer $k\geq 2$ using  a cleverly designed  regularization scheme
	and classical energy estimate method. Then,  (\ref{KdV eq})  was  shown by Bona and Scott \cite{BS76}  to be (globally) well-posed in the space $H^s (\m{R})$  or $H^s (\m{T})$ for any real number $s\geq 2$   using Tartar's  nonlinear  interpolation  
	theory \cite{Tar72}. After this, as one of the applications  of the semigroup theory, which is a powerful general  theory dealing  with various  quasi-linear evolutionary PDEs,  Kato\cite{Kat75, Kat79, Kat81, Kat83}  showed that (\ref{KdV eq}) is  locally well-posed in  $H^s (\m{R})$  or $H^s (\m{T})$ for any $s> \frac32$. 
	
	In Stage 2,  as various smoothing properties  of dispersive wave equations   were discovered in 1980s (cf. \cite{Kat83, CS88, Sjo87, KPV91Indiana},  Kenig, Ponce and Vega\cite{KPV89, KPV91JAMS, KPV93CPAM}  were able to exploit  the various  dispersive smoothing 
	properties of the  linear 
	KdV equation to show  that  (\ref{KdV eq}) 
	is locally  well-posed in the space $H^s (\m{R})$ for any  $s>\frac{3}{4}$ by applying  the  contraction mapping principle in a carefully constructed  Banach space,  now known as   the Kenig-Ponce-Vega  (or KPV)  space.  As one of the key linear estimate fails when $s<\frac34$,    one can only show that (\ref{KdV eq}) is well-posed in $H^s (\m{R}) $ for $s>\frac34$  using this approach.
	
	In Stage 3,  Bourgain\cite{Bou93b}  introduced the  Fourier restriction spaces $X_{s,b}$  and  showed that the Cauchy problem (\ref{KdV eq}) is well-posed in  both spaces $H^s (\m{R})$ and $H^s (\m{T})$ for any $s\geq 0$  by  using the  contraction mapping principle in  $X_{s,b}$.    Then Kenig, Ponce and Vega\cite{KPV96} showed   that (\ref{KdV eq}) is locally well-posed in  $H^s (\m{R})$ for any $s> -\frac34$ and in $H^s (\m{T})$ for any $s\geq -\frac12$. The  local well-posedness of (\ref{KdV eq}) in the space $H^{-\frac34} (\m{R})$  was established by Christ, Colliander and Tao\cite{CCT03}.  The thresholds $-\frac{3}{4}$ for $H^s(\m{R})$  and  $-\frac12$ for $H^s(\m{T})$ are  sharp if one requires the solution map to be uniformly continuous, see \cite{CCT03}.  Moreover,  (\ref{KdV eq}) has been shown to be globally well-posed in $H^s (\m{R})$ for $s\geq -\frac34$ and in $H^s (\m{T})$ for $s\geq -\frac12$ (see \cite{CKSTT03, Guo09, Kis09}).
	
	In Stage 4,  Kappeler and Topalov  \cite{KT06} proved that (\ref{KdV eq})  is globally well-posed in the space $H^s (\m{T})$ for any $s\geq -1$ by developing a new approach  based on the inverse scattering method. Recently, Killip and Visan \cite{KV19} showed that  (\ref{KdV eq}) is globally well-posed   in the space $H^{-1} (\m{R})$  by introducing a new method of general applicability for the study of low-regularity well-posedness for integrable PDE.   As it has already been shown by Molinet \cite{Mol11, Mol12}  that   (\ref{KdV eq}) is ill-posed in both $H^s (\m{R})$ and $H^s (\m{T})$ for any $s< -1$,  the study of the well-posedness of (\ref{KdV eq})  has drawn a satisfactory conclusion.
	
	There is a difference between the well-posdenss  presented in Stages 1 and 4 and those  presented in Stages 2 and 3.  For the well-posedness obtained in Stage 1 and 4, 
	the solution of (\ref{KdV eq}) depends only continuously  on its initial value.  By  contrast,  for  the well-posedness established in Stage 2 and 3,  one can show   the solution of (\ref{KdV eq}) depends on its initial value analytically (cf.  \cite{Zha95JFA, Zha95DIE, Zha95SIMA}). Thus the Cauchy problem  (\ref{KdV eq}) is   analytically well-posed in $H^s (\m{R})$ for $s\geq -\frac34 $ and in $H^s (\m{T})$  for $s\geq -\frac12$, but  is only continuously well-posed in  $H^s (\m{R})$ for $-1\leq s<  -\frac34 $ and in $H^s (\m{T})$  for $-1\leq s < -\frac12$ .
	
	Naturally, following the  advances of  the study of the well-posedness of the  Cauchy problem (\ref{KdV eq}) for the single KdV equation,   there have been many works on the well-posedness of the Cauchy problem   (\ref{ckdv, coef form}) for the coupled KdV-KdV  systems. Here we provide a  brief summary  of the previous results on $\mcal{H}^{s}(\m{R})$. As a convenience of the notation, LWP and GWP will stand for local well-posedness and global well-posedness.
	\begin{itemize}
		\item  Majda-Biello system (\ref{M-B system}). 
		\begin{itemize}
			\item If $a_{2}=1$, the LWP in $\mcal{H}^{s}(\m{R})$ for any $s>-\frac{3}{4}$ follows immeidately from the single KdV theory. The GWP in $\mcal{H}^{s}$ for any $s>-\frac{3}{4}$ was justified by Oh\cite{Oh09b} via the I-method.
			
			\item If $a_{2}\in(0,4)\setminus\{1\}$, Oh\cite{Oh09a} proved that (\ref{M-B system}) is locally well-posed  in $\mcal{H}^{s}(\m{R})$ for $s\geq 0$ and  ill-posed when $s<0$ if the solution map  is required to be $C^{2}$.   The key ingredient in the proof for the LWP is the bilinear estimate under the Fourier restriction norm. Due to the $L^{2}$ conservation law of (\ref{M-B system}), its GWP in $\mcal{H}^{s}(\m{R})$ for $s\geq 0$ automatically holds.
		\end{itemize}
		
		\item Hirota-Satsuma system (\ref{H-S system}).
		\begin{itemize}
			
			\item Alvarez-Carvajal\cite{AC08} proved the LWP for (\ref{H-S system}) in $\mcal{H}^{s}(\m{R})$ for $s>\frac{3}{4}$ via the method in \cite{KPV91JAMS}.
			
			\item Feng\cite{Fen94} considered a slightly general system:
			\be\label{H-S system, general}
			\left\{\begin{array}{rll}
				u_{t}+ a_{1}u_{xxx} &=& -6a_{1}uu_{x}+c_{12}vv_{x},   \vspace{0.03in}\\
				v_{t}+v_{xxx} &=& c_{22}vv_{x}+d_{22}uv_{x}, \vspace{0.03in}\\
				\left. (u,v)\right |_{t=0} &=& (u_{0},v_{0}).
			\end{array}\right.\ee
			When $c_{22}=0$ and $d_{22}=-3$, (\ref{H-S system, general}) reduces to the original Hirota-Satusma system (\ref{H-S system}). Feng proved the LWP of (\ref{H-S system, general}) in $\mcal{H}^{s}(\m{R})$ for $s\geq 1$ under the assumption that $a_{1}\neq 1$ and $c_{12}d_{22}<0$. The GWP was also shown by the further restriction that $0<a_{1}<1$. 
		\end{itemize}
		
		\item Gear-Grimshaw system (\ref{G-G system}). 
		\begin{itemize}
			\item Assume $\sigma_{4}=0$ and $\rho_{2}\sigma_{3}^{2}\neq 1$. Bona-Ponce-Saut-Tom\cite{BPST92} proved the LWP of (\ref{G-G system}) in $\mcal{H}^{s}(\m{R})$ for $s\geq 1$. They also showed the GWP of (\ref{G-G system}) in $\mcal{H}^{s}(\m{R})$ for $s\geq 1$ under further  assumption that $\rho_{2}\sigma_{3}^{2}<1$.
			
			\item Later, further LWP and GWP results were proven by Ash-Cohen-Wang\cite{ACW96}, Linares-Panthee\cite{LP04} and Saut-Tzvetkov\cite{ST00}, where the best LWP result is proven in $\mcal{H}^{s}(\m{R})$ for $s>-\frac{3}{4}$. However, their argument essentially requires the matrix $A_{1}$ in (\ref{A1 in G-G system}) to be similar to the identity matrix, which means $\sigma_{3}=0$ and $\rho_{1}=1$. Equivalently, if considering the diagonalized system (\ref{ckdv, coef form}), their results are  only valid under the assumption that $a_{1}=a_{2}$ (see Remark 1.2  in \cite{Oh09a}  and Remark 3.1  in \cite{AC08}  for more detailed explanations).
		\end{itemize}
		\item General coupled KdV-KdV  systems
		\begin{itemize}
			\item Alvarez-Carvajal\cite{AC08} considered the diagonalized system (\ref{ckdv, coef form}) where $(b_{ij})=0$, $d_{11}=d_{12}$ and $d_{21}=d_{22}$, i.e.,  
			\be\label{G-G system, diag}
			\left\{\begin{array}{rcl}
				u_{t}+a_{1}u_{xxx} &=& c_{11}uu_{x}+c_{12}vv_{x}+d_{11}(uv)_{x},\vspace{0.03in}\\
				v_{t}+a_{2}v_{xxx} &=& c_{21}uu_{x}+c_{22}vv_{x}+d_{22}(uv)_{x}, \vspace{0.03in}\\
				\left. (u,v)\right |_{t=0} &=& (u_{0},v_{0}).
			\end{array}\right.\ee
			They proved that  (\ref{G-G system, diag})  is locally well-posed in $\mcal{H}^{s}(\m{R})$ for $s>-\frac{3}{4}$ if $a_{1}=-a_{2}\neq 0$. The key tool in their proof is the bilinear estimate under the Fourier restriction norm.    The question whether (\ref{G-G system, diag}) is  well-posed  in $\mcal{H}^{s}(\m{R})$  when $|a_{1}|\ne|a_{2}|$  is left open in \cite{AC08}. On the other hand,  Alvarez-Carvaja's result  in  \cite{AC08} actually does not apply to the  Gear-Grimshaw system (\ref{G-G system}) since $a_{1}=-a_{2}>0  $ implies $\rho_{1} =-1$ which is against the  assumption $\rho_{1} >0$.

		\end{itemize} 
		
	\end{itemize}
	
	\subsection{Main results on well-posedness}
	\label{Subsec, results on wp}
	
	\quad
	As we have seen from the literature review,  the  dispersion coefficients $a_{1}$ and $a_{2}$, and other coefficients $(b_{ij})$, $(c_{ij})$ and $(d_{ij})$, in the systems (\ref{ckdv,  coef form}) have significant impact  on  the well-posedness  results.  The following theorem is the main finding we have obtained so far.
	
	\begin{theorem}\label{Thm, lwp for ckdv}
		Let $a_{1},\,a_{2}\in\m{R}\setminus\{0\}$ and denote $r=\dfrac{a_{2}}{a_{1}}$. Then (\ref{ckdv, coef form}) is locally analytically  well-posed in $\mathcal{H}^{s}(\m{R})$ for any case in Table \ref{Table, lwp for ckdv}. 
		\begin{table}[!ht]
			\renewcommand\arraystretch{1.5}
			\begin{center}
				\begin{tabular}{|c|c|c|c|} \hline
					Case & $r=\frac{a_{2}}{a_{1}}$   & Coefficients $b_{ij}$, $c_{ij}$ and $d_{ij}$    & $s$    \\ \hline
					(1) & $r<0$ & \begin{tabular}{c} $(c_{ij})=0$, $d_{11}=d_{12}$ and $d_{21}=d_{22}$    \\ Otherwise \end{tabular} & \begin{tabular}{c} $s\geq -\frac{13}{12}$ \\ $s>-\frac{3}{4}$ \end{tabular}  \\\hline
					(2) & $0<r<\frac{1}{4}$ & \begin{tabular}{c} $c_{12}=d_{21}=d_{22}=0$     \\ Otherwise \end{tabular} & \begin{tabular}{c} $s>-\frac{3}{4}$ \\ $s\geq 0$ \end{tabular}  \\\hline
					(3) & $r=\frac{1}{4}$ & \begin{tabular}{c} $c_{21}=d_{11}=d_{12}=0$     \\ Otherwise \end{tabular} & \begin{tabular}{c} $s\geq 0$ \\ $s\geq\frac{3}{4}$ \end{tabular}  \\\hline
					(4) & $\frac{1}{4}<r<1$ & arbitrary & $s\geq 0$  \\\hline
					(5) & $r=1$ & \begin{tabular}{c} $b_{12}=b_{21}=0$, $d_{11}=d_{12}$ and $d_{21}=d_{22}$     \\ $b_{12}=b_{21}=0$, $d_{11}\neq d_{12}$ or $d_{21}\neq d_{22}$ \end{tabular} & \begin{tabular}{c} $s>-\frac{3}{4}$ \\ $s>0$ \end{tabular}  \\\hline
					(6) & $1<r<4$ & arbitrary & $s\geq 0$  \\\hline
					(7) & $r=4$ & \begin{tabular}{c} $c_{12}=d_{21}=d_{22}=0$     \\ Otherwise \end{tabular} & \begin{tabular}{c} $s\geq 0$ \\ $s\geq\frac{3}{4}$ \end{tabular}  \\\hline
					(8) & $r>4$ & \begin{tabular}{c} $c_{21}=d_{11}=d_{12}=0$     \\ Otherwise \end{tabular} & \begin{tabular}{c} $s>-\frac{3}{4}$ \\ $s\geq 0$ \end{tabular}  \\\hline
				\end{tabular}	 
			\end{center} 
			\caption{Main Results}
			\label{Table, lwp for ckdv}
		\end{table}
	\end{theorem}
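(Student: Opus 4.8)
The plan is to prove all eight rows of Table~\ref{Table, lwp for ckdv} by a single mechanism: solve the Duhamel formulation of (\ref{ckdv, coef form}) by the contraction mapping principle in a pair of Fourier restriction spaces adapted to the two Airy propagators, the only case-dependent ingredient being a bilinear estimate whose exact range of validity is the threshold claimed in that row. Because the resulting solution map is then assembled from bounded bilinear operators, it is automatically real-analytic, which yields the ``locally analytically well-posed'' conclusion.

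\emph{Linear reduction.} First I would strip off the lower-order terms $b_{ij}\partial_x$ and the linear coupling. When $r\neq1$, for $|\xi|$ large the matrix symbol $\xi^{3}A_1-\xi A_2$ has two real eigenvalues $\lambda_j(\xi)=a_j\xi^{3}+O(|\xi|)$ separated by $\gtrsim|\xi|^{3}$, with diagonalizer $P(\xi)=I+O(|\xi|^{-2})$; conjugating by $P$ turns the linear flow into the decoupled pair $W_{a_1}(t)\oplus W_{a_2}(t)$ of Airy-type groups, the bounded low-frequency discrepancy and the $O(|\xi|^{-2})$ remainder being harmlessly absorbed into the Duhamel term. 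When $r=1$ the standing hypothesis $b_{12}=b_{21}=0$ already decouples the linear part, and each remaining $b_{jj}\partial_x$ only shifts the phase to $a_j\xi^{3}-b_{jj}\xi$, affecting none of the resonance computations below. One then works in the spaces $X^{s,b}_{a_j}$ with norm $\big\|\langle\xi\rangle^{s}\langle\tau-a_j\xi^{3}\rangle^{b}\,\wh u\big\|_{L^2_{\xi,\tau}}$, taking $b=\tfrac12+$ in the rows asserting $s>s^{*}$ and, in the borderline rows where plain $X^{s,b}_{a_j}$ does not suffice, its $\ell^1$-in-modulation (Besov-type) refinement in the style of Christ--Colliander--Tao. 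With a smooth time cutoff $\psi_T$, the standard homogeneous and inhomogeneous linear estimates in these spaces reduce local well-posedness, with a positive power of $T$ to spare, to the bilinear estimates described next.

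\emph{The bilinear estimates and the thresholds.} Every nonlinear term is $\partial_x(uv)$ (divergence form) or $u_xv$, $uv_x$ (non-divergence form), and each of its two factors, as well as the equation it sits in, carries one of the two dispersions. With $\xi=\xi_1+\xi_2$, every product reduces (up to the overall constant $a_1$) to one of two master resonance functions,
\[
H=\xi_2\big[(r-1)\xi_2^{2}-3\xi_1\xi_2-3\xi_1^{2}\big]
\qquad\text{and}\qquad
H'=\xi_1\big[(1-r)\xi_1^{2}-3r\xi_1\xi_2-3r\xi_2^{2}\big],
\]
except the two pure self-interactions --- an $uu_x$ driving the $u$-equation and a $vv_x$ driving the $v$-equation --- whose resonance is the classical Airy one $-3\,\xi\xi_1\xi_2$. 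The bracketed quadratics have discriminants proportional to $4r-1$ and to $r(4-r)$, and this dichotomy governs the whole table. When a bracket is definite (its discriminant negative) the corresponding $H$ degenerates only where the outgoing derivative itself vanishes, so in divergence form $s$ may be pushed well below $-\tfrac34$; if moreover the two factors of the product carry \emph{distinct} dispersions there is an extra transversality, and when this holds for \emph{every} nonlinear term at once --- which forces $(c_{ij})=0$ together with $d_{11}=d_{12}$, $d_{21}=d_{22}$, and requires both $H$ and $H'$ definite, i.e.\ $r<0$ --- the sharp threshold drops all the way to $-\tfrac{13}{12}$. Any same-dispersion product (an $uu_x$ or $vv_x$, whichever equation it drives) or any genuinely non-divergence term (where the derivative may land on the high-frequency factor) gives this gain back and caps the threshold at $-\tfrac34$. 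When a bracket is indefinite (discriminant positive) its $H$ vanishes simply along rays, forcing $s\ge0$ for that term; and it vanishes to \emph{second} order exactly at $r=\tfrac14$ (for $H$) and $r=4$ (for $H'$) --- the worst case --- forcing $s\ge\tfrac34$. At $r=1$ there is no distinct second dispersion and every estimate is the classical KdV one, threshold $-\tfrac34$. Row by row, the listed coefficient restrictions are exactly those that switch off the terms whose bilinear estimate would be the bottleneck, which accounts for the last column.

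\emph{Proving the bilinear estimates, and the main obstacle.} Each bilinear estimate I would establish by duality and a dyadic decomposition in the output and input spatial frequencies $N,N_1,N_2$ and in the modulations $L,L_1,L_2$: the identity $L+L_1+L_2\gtrsim|H|$ forces the largest modulation to dominate $|H|$, after which the high--low and the bulk of the high--high interactions close via the $L^{4}_{t,x}$ Strichartz inequality for the Airy group, and the delicate near-resonant high--high interactions via a sharp bilinear Airy smoothing estimate of Kenig--Ponce--Vega type --- here for two possibly distinct dispersions, so one must quantify their transversality. Summing the dyadic blocks is where the restriction on $s$ enters; at the borderline values the $\ell^1$-modulation structure is exactly what makes the critical sum converge, at the cost of re-proving the linear estimates in that refined space. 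The hard part will be precisely this last step at the negative-regularity thresholds $-\tfrac{13}{12}$ and $-\tfrac34$, where several competing regimes --- high--high$\to$low, high--low$\to$high, and the region where $H$ is small --- must be balanced simultaneously for two different Airy dispersions, the endpoint $s=-\tfrac{13}{12}$ forcing the rigid function space throughout. The remaining ingredients --- the linear reduction, the $\psi_T$/contraction scheme, and the bookkeeping that attaches to each of the eight rows its subset of needed bilinear estimates --- are routine once those estimates are in hand.
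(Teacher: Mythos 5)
Your high-level plan --- Duhamel fixed point in Bourgain spaces adapted to the two dispersions, with the row-by-row thresholds dictated by bilinear estimates that you key to the sign and vanishing of the discriminant of the characteristic quadratic $h_r$ --- matches the paper's proof structure (Section~3.1, Proposition~3.1, and the bilinear Theorems~\ref{Thm, d2, neg} and~\ref{Thm, bilin est, general}). Your resonance computation and the reading $r<0\Leftrightarrow$ both brackets definite $\Rightarrow$ threshold $-\tfrac{13}{12}$ is correct, as is the identification of $r=\tfrac14$ and $r=4$ as the second-order degeneracies forcing $\tfrac34$.

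However, you have a genuine gap at the endpoint rows. You claim that ``plain $X^{s,b}$ does not suffice'' at the inclusive thresholds and propose switching to an $\ell^{1}$-in-modulation Besov-type refinement \`a la Christ--Colliander--Tao. The paper never does this, and it would be both unnecessary and insufficient as you set it up. The key observation --- flagged explicitly after Theorem~\ref{Thm, sharp d2, neg} --- is that at $s=-\tfrac{13}{12}$ with $r<0$ the bilinear estimate (D2) \emph{does} hold in plain $X^{s,b}$, but only for the single modulation exponent $b=\tfrac{11}{18}$, which is strictly \emph{away} from $\tfrac12$ (Theorem~\ref{Thm, d2, neg}(1): the admissible window $[\tfrac14-\tfrac{s}{3},\,\tfrac43+\tfrac{2s}{3}]$ collapses to a point). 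Since $\tfrac{11}{18}\in(\tfrac12,\tfrac23]$, the linear estimates of Lemmas~\ref{Lemma, lin est for KdV} and~\ref{Lemma, est for lin term} still apply, and the contraction closes in the ordinary $X^{s,b}$ space. But with $b$ pinned at $\tfrac{11}{18}$ there is \emph{no} ``positive power of $T$ to spare,'' contrary to what you assert; the paper instead closes the contraction for any prescribed $T$ by the scaling change of variables~(\ref{cov for div form}), which drives the initial data and the $b_{ij}$ coefficients to zero. Your scheme, as written, would not close at that endpoint without this scaling step, and the Besov detour you propose would require re-proving the linear theory in that space for no gain.

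Two further, less critical, divergences from the paper's route. First, your linear reduction diagonalizes the full symbol $\xi^3A_1-\xi A_2$ by a pseudodifferential conjugator $P(\xi)=I+O(|\xi|^{-2})$; the paper instead diagonalizes $A_1$ once by a constant matrix (already built into~(\ref{ckdv, coef form})) and then handles the residual lower-order coupling $-b_{12}v_x$, $-b_{21}u_x$ by the elementary estimate of Lemma~\ref{Lemma, est for lin term}, valid precisely because $\a_1\neq\a_2$, with the scaling making $|b_{ij}|$ small; this is much lighter than tracking how $P(\xi)$ acts on the nonlinearity, and Proposition~\ref{Prop, false of est for lin term} explains why the hypothesis $b_{12}=b_{21}=0$ is forced in the row $r=1$. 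Second, your bilinear proofs go through $L^{4}$ Airy Strichartz and a bilinear smoothing estimate; the paper's proofs (Section~\ref{Sec, proof for bilin est}) are entirely by duality, Cauchy--Schwarz in the conditioned variable, and elementary calculus lemmas (Lemmas~\ref{Lemma, int in tau}--\ref{Lemma, int for cubic}) on the cubic $L_1+L_2=P_{\xi,\tau}(\cdot)$ and quadratic $L_1+L_3=Q_{\xi,\tau}(\cdot)$ polynomials of the resonance variable --- a more hands-on but also more robust route that tracks exactly how close $b$ may be pushed to the critical exponent in each region.
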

	
	The well-posedness results presented in Theorem \ref{Thm, lwp for ckdv} are sharp in the sense that the key bilinear estimates used in their proofs are sharp (up to the endpoints), see Theorem \ref{Thm, d2, neg}--\ref{Thm, sharp bilin est, general}. 
	
	As applications,  we  apply  Theorem \ref{Thm, lwp for ckdv} to  a few  specializations of  (\ref{ckdv, coef form}). First,  we consider a special class of (\ref{ckdv, coef form}) of the following form
	\be\label{weak nonlin ckdv}
	\left\{\begin{array}{rcl}
		u_{t}+a_{1}u_{xxx} &=& d_{1}(uv)_{x},\vspace{0.03in}\\
		v_{t}+a_{2}v_{xxx} &=& d_{2} (uv)_{x}, \vspace{0.03in}\\
		\left. (u,v)\right |_{t=0} &=& (u_{0},v_{0}).
	\end{array}\right.\ee
	
	\begin{theorem}\label{Thm, weak nonlin ckdv}
		If $a_{1}a_{2}<0$, then the system (\ref{weak nonlin ckdv}) is locally analytically well-posed in $\mathcal{H}^{s}(\m{R})$ for $s\geq -\frac{13}{12}$.
	\end{theorem}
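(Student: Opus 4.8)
The plan is to recognize the system \eqref{weak nonlin ckdv} as a special case of \eqref{ckdv, coef form} in which $(b_{ij})=0$, $(c_{ij})=0$, and the nonlinearities are in divergence form with $d_{11}=d_{12}=d_1$ and $d_{21}=d_{22}=d_2$. Since $a_1 a_2 < 0$ means precisely $r = a_2/a_1 < 0$, this falls under Case (1) of Table \ref{Table, lwp for ckdv}; and because $(c_{ij})=0$ together with the divergence-form conditions $d_{11}=d_{12}$, $d_{21}=d_{22}$ hold, we are in the first sub-row of Case (1), which yields local analytic well-posedness in $\mathcal H^s(\m R)$ for $s\geq-\tfrac{13}{12}$. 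Thus the statement is an immediate corollary of Theorem \ref{Thm, lwp for ckdv}, and the bulk of the "proof" is just the bookkeeping verifying that \eqref{weak nonlin ckdv} satisfies the hypotheses of that sub-case.

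Concretely, the first step is to rewrite \eqref{weak nonlin ckdv} in the normalized form \eqref{ckdv, coef form}: expand $d_i(uv)_x = d_i u_x v + d_i u v_x$, so that $c_{11}=c_{12}=c_{21}=c_{22}=0$, $d_{11}=d_{12}=d_1$, $d_{21}=d_{22}=d_2$, and $b_{11}=b_{12}=b_{21}=b_{22}=0$. The second step is to check the structural assumption on $A_1$: here $A_1=\mathrm{diag}(a_1,a_2)$ is already diagonal with $a_1a_2\neq 0$, so no change of variables $M$ is needed and the reduction to \eqref{ckdv, coef form} is trivial. The third step is to observe that $a_1a_2<0$ forces $r<0$, placing us in Case (1); and since $(c_{ij})=0$ with $d_{11}=d_{12}$ and $d_{21}=d_{22}$, the relevant entry in the table gives the threshold $s\geq-\tfrac{13}{12}$. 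Invoking Theorem \ref{Thm, lwp for ckdv} then completes the argument, with the analyticity of the solution map being part of that theorem's conclusion.

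Since this is a direct specialization, there is no genuine obstacle in deducing Theorem \ref{Thm, weak nonlin ckdv} from Theorem \ref{Thm, lwp for ckdv}; the only point requiring a word of care is confirming that the purely-$(uv)_x$ nonlinearity really does activate the improved index $-\tfrac{13}{12}$ rather than the generic $-\tfrac34$. This hinges on the fact that when $(c_{ij})=0$ the only bilinear interactions are of the $uv$-type in divergence form, for which the sharp bilinear estimate (the $d_2$-type estimate referenced in Theorem \ref{Thm, d2, neg}) holds down to $s\geq-\tfrac{13}{12}$ in the regime $r<0$; the absence of any $uu_x$ or $vv_x$ self-interaction terms is exactly what prevents the weaker $-\tfrac34$ threshold from being forced. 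Once this is noted, the contraction-mapping scheme in the associated Fourier restriction (Bourgain) spaces $X_{s,b}^{a_i}$ underlying Theorem \ref{Thm, lwp for ckdv} applies verbatim, and the real-analytic dependence on the initial data follows in the usual way from the analyticity of the fixed-point map.
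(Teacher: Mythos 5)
Your proposal is correct and matches the paper's approach exactly: the paper also proves Theorem \ref{Thm, weak nonlin ckdv} by observing that \eqref{weak nonlin ckdv} is the sub-case of Case (1) in Theorem \ref{Thm, lwp for ckdv} with $(c_{ij})=0$, $d_{11}=d_{12}$, $d_{21}=d_{22}$, and $r=a_2/a_1<0$. The additional remarks you give about why the $-\frac{13}{12}$ threshold (via the (D2) bilinear estimate of Theorem \ref{Thm, d2, neg}) is activated are accurate but not needed for the deduction itself.
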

	
	The above theorem is surprising  since even the Cauchy problem  (\ref{KdV eq}) of the single KdV equation  is ill-posed in $H^s(\m{R}) $ for any $s<-1$.
	
	\begin{theorem}\label{Thm, M-B WP}
		The Majda-Biello system (\ref{M-B system}), where $a_2\neq 0$, is locally (resp. globally) analytically well-posed in $\mathcal{H}^{s}(\m{R})$ for any case in Table \ref{Table, lwp for M-B} (resp. Table \ref{Table, gwp for M-B}).
		\begin{table}[!ht]
			\begin{floatrow}
				\capbtabbox{
					\renewcommand\arraystretch{2}
					\begin{tabular}{|c|c|c|} \hline
						Case & Coefficient $a_2$  & $s$    \\ \hline
						(1) & $a_2\in(-\infty,0)\cup\{1\}\cup\{4,\infty\}$ & $s>-\frac34$  \\\hline
						(2) & $a_2\in(0,1)\cup(1,4)$ & $s\geq 0$  \\\hline
						(3) & $a_2=4$ & $s\geq\frac{3}{4}$ \\\hline
					\end{tabular}	 
				}
				{\caption{LWP Results}
					\label{Table, lwp for M-B}
				}
				\capbtabbox{
					\renewcommand\arraystretch{2}
					\begin{tabular}{|c|c|c|} \hline
						Case & Coefficient $a_2$  & $s$    \\ \hline
						(1) & $a_2=1$ & $s>-\frac34$  \\\hline
						(2) & $a_2\not\in\{1,4\}$ & $s\geq 0$  \\\hline
						(3) & $a_2=4$ & $s\geq1$ \\\hline
					\end{tabular}	 
				}
				{\caption{GWP Results}
					\label{Table, gwp for M-B}
				}
			\end{floatrow}
		\end{table}
	\end{theorem}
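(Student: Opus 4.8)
The plan is to derive Theorem \ref{Thm, M-B WP} as a corollary of Theorem \ref{Thm, lwp for ckdv} for the local part, and of Theorem \ref{Thm, lwp for ckdv} together with the conservation laws of (\ref{M-B system}) for the global part. The first step is simply to read off the coefficients: comparing (\ref{M-B system}) with the normalized form (\ref{ckdv, coef form}) gives $a_{1}=1$ (so $r=a_{2}$), all $b_{ij}=0$, then $-vv_{x}$ yields $c_{11}=c_{21}=c_{22}=0$, $c_{12}=-1$, $d_{11}=d_{12}=0$, and $-(uv)_{x}=-u_{x}v-uv_{x}$ yields $d_{21}=d_{22}=-1$. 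Thus (\ref{M-B system}) is in divergence form, and the only nonzero nonlinear coefficients are $c_{12}=-1$ and $d_{21}=d_{22}=-1$.

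For Table \ref{Table, lwp for M-B} I would run through the eight regimes of $r=a_{2}$ in Theorem \ref{Thm, lwp for ckdv}. Since $(c_{ij})\not\equiv 0$ (because $c_{12}=-1\neq 0$), Case (1) with $a_{2}<0$ lands in the ``Otherwise'' row, so $s>-\frac34$. Case (5) with $a_{2}=1$: $b_{12}=b_{21}=0$ and the system is in divergence form, so $s>-\frac34$. Case (8) with $a_{2}>4$: $c_{21}=d_{11}=d_{12}=0$ holds, so $s>-\frac34$. Cases (2), (4), (6) with $a_{2}\in(0,1)\cup(1,4)$, as well as Case (3) with $a_{2}=\frac14$ (where $c_{21}=d_{11}=d_{12}=0$), give $s\geq 0$. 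Finally Case (7) with $a_{2}=4$ falls in the ``Otherwise'' row since $c_{12}\neq 0$, giving $s\geq\frac34$. Collecting these recovers precisely the three rows of Table \ref{Table, lwp for M-B}, and local analyticity of the solution map is inherited from Theorem \ref{Thm, lwp for ckdv}.

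For Table \ref{Table, gwp for M-B} I would combine the above with two conserved quantities of (\ref{M-B system}). A one-line integration by parts (multiply the first equation by $u$, the second by $v$, add, and use $\int uvv_{x}=-\tfrac12\int u_{x}v^{2}$) shows $\int_{\m{R}}(u^{2}+v^{2})\,dx$ is conserved; together with the local theory at $s=0$, which by Table \ref{Table, lwp for M-B} is available for every $a_{2}\neq 4$, this gives GWP in $\mcal{H}^{0}(\m{R})$, hence (by persistence of regularity) in $\mcal{H}^{s}(\m{R})$ for all $s\geq 0$ whenever $a_{2}\neq 4$ — this yields row (2) and the $s\geq 0$ statements in general. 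For $a_{2}=4$ the relevant conserved quantity is the Hamiltonian $H(u,v)=\int_{\m{R}}\big(\tfrac12 u_{x}^{2}+\tfrac{a_{2}}{2}v_{x}^{2}-\tfrac12 uv^{2}\big)\,dx$; estimating the cubic term by $\int|u|v^{2}\leq\|u\|_{L^{2}}\|v\|_{L^{4}}^{2}\lesssim\|u\|_{L^{2}}\|v\|_{L^{2}}^{3/2}\|v_{x}\|_{L^{2}}^{1/2}$ and absorbing $\|v_{x}\|_{L^{2}}^{1/2}$ via Young's inequality (legitimate because the $L^{2}$ norms are already controlled by the conserved mass) shows $H$ controls $\|u_{x}\|_{L^{2}}^{2}+\|v_{x}\|_{L^{2}}^{2}$ from below up to constants; this a priori $\mcal{H}^{1}$ bound upgrades the $s\geq\frac34$ local theory to GWP in $\mcal{H}^{s}(\m{R})$ for $s\geq 1$, which is row (3). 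For $a_{2}=1$ the refinement to $s>-\frac34$ (row (1)) comes from the $I$-method of Oh \cite{Oh09b}: one propagates a mollified, almost-conserved $L^{2}$-type functional along the local flow supplied by Theorem \ref{Thm, lwp for ckdv}, obtains a polynomial-in-time bound, and iterates to global existence while the solution map stays real-analytic on each finite interval.

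The only substantial ingredient is the local well-posedness of Theorem \ref{Thm, lwp for ckdv}, which is already in hand; after that the proof is essentially bookkeeping. The two places needing genuine care are the coefficient check against Table \ref{Table, lwp for ckdv} in every $a_{2}$-regime (which sub-row applies depends on which $c_{ij},d_{ij}$ vanish, and here only $c_{12}$ and $d_{21}=d_{22}$ survive), and the two global refinements — the coercivity of $H$ at $a_{2}=4$ and the correct invocation of the $I$-method at $a_{2}=1$ consistent with the analytic well-posedness claim. I expect the latter (the $a_{2}=1$ global theory) to be the main, though fairly mild, obstacle.
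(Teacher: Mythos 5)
Your proposal is correct and follows the same strategy as the paper: read off the coefficients $a_1=1$, $(b_{ij})=0$, $c_{12}=-1$, $d_{21}=d_{22}=-1$ (all others zero), then cycle through the eight cases of Theorem~\ref{Thm, lwp for ckdv} to read off the LWP thresholds, and obtain GWP by combining the LWP with the conserved quantities $E_1=\int u^2+v^2$ and $E_2=\int u_x^2+a_2 v_x^2-uv^2$, deferring to Oh's $I$-method for the $a_2=1$ case. The paper states this very tersely; your explicit coercivity estimate for the Hamiltonian at $a_2=4$ (controlling the cubic term via Gagliardo--Nirenberg and absorbing $\|v_x\|_{L^2}^{1/2}$ by Young once the mass is fixed) is the correct filling-in of the step the paper leaves implicit.
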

	
	Remark: in Theorem \ref{Thm, M-B WP}, Case (1) and (2) in Table \ref{Table, lwp for M-B} and  \ref{Table, gwp for M-B} have been known earlier in Oh\cite{Oh09a, Oh09b}.
	
	\begin{theorem}\label{Thm, H-S WP}
		The Hirota-Satsuma systems (\ref{H-S system}), where $a_1\neq 0$, is  locally (resp. globally) analytically well-posed in $\mathcal{H}^{s}(\m{R})$ for any case in Table \ref{Table, lwp for H-S} (resp. Table \ref{Table, gwp for H-S}).
		\begin{table}[!ht]
			\begin{floatrow}
				\capbtabbox{
					\renewcommand\arraystretch{2}
					\begin{tabular}{|c|c|c|} \hline
						Case & Coefficients $a_1$ and $c_{12}$  & $s$    \\ \hline
						(1) & $a_1\in(-\infty,0)\cup(0,\frac14)$ & $s>-\frac34$  \\\hline
						(2) & $a_1\in(\frac14,1)\cup(1,\infty)$ & $s\geq 0$  \\\hline
						(3) & $a_1=1$ & $s>0$ \\ \hline
						(4) & $a_1=\frac14$ & $s\geq\frac{3}{4}$ \\\hline
					\end{tabular}	 
				}
				{\caption{LWP Results}
					\label{Table, lwp for H-S}
				}
				\capbtabbox{
					\renewcommand\arraystretch{2}
					\begin{tabular}{|c|c|c|} \hline
						Case & Coefficients $a_1$ and $c_{12}$  & $s$    \\ \hline
						(1) & $a_1\not\in\{\frac14, 1\}$,\, $c_{12}>0$ & $s\geq 0$  \\\hline
						(2) & $a_1=\frac14$,\, $c_{12}>0$ & $s\geq 1$  \\\hline
					\end{tabular}	 
				}
				{\caption{GWP Results}
					\label{Table, gwp for H-S}
				}
			\end{floatrow}
		\end{table}
	\end{theorem}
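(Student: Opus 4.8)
The local statements in Theorem \ref{Thm, H-S WP} are immediate corollaries of Theorem \ref{Thm, lwp for ckdv}. Written in the normalized form (\ref{ckdv, coef form}), the Hirota--Satsuma system (\ref{H-S system}) has $a_2=1$, all $b_{ij}=0$, $c_{11}=-6a_1$, $c_{21}=c_{22}=0$, $d_{11}=d_{12}=d_{21}=0$, $d_{22}=-3$, with $c_{12}$ free; in particular $r=a_2/a_1=1/a_1$. The plan is to run through the rows of Table \ref{Table, lwp for ckdv} according to the value of $r$ and, in each row, to check whether the coefficients of (\ref{H-S system}) fall in the ``good'' subcase. The only facts one needs are: $c_{11}=-6a_1\neq 0$, so $(c_{ij})\neq 0$; $c_{21}=d_{11}=d_{12}=0$, so (\ref{H-S system}) does land in the good subcase whenever that subcase requires only these three coefficients to vanish (Cases (3) and (8) of Table \ref{Table, lwp for ckdv}); $d_{22}=-3\neq 0$, so (\ref{H-S system}) misses the good subcase whenever it also requires $d_{22}=0$ (Cases (2) and (7)); and, at $r=1$, $b_{12}=b_{21}=0$ while $d_{21}\neq d_{22}$, which puts (\ref{H-S system}) in the second subcase of Case (5). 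Matching the eight ranges of $a_1$ to the eight subregions of $r$ ($a_1<0\leftrightarrow r<0$, $0<a_1<\tfrac14\leftrightarrow r>4$, $a_1=\tfrac14\leftrightarrow r=4$, $\tfrac14<a_1<1\leftrightarrow 1<r<4$, $a_1=1\leftrightarrow r=1$, $1<a_1<4\leftrightarrow \tfrac14<r<1$, $a_1=4\leftrightarrow r=\tfrac14$, $a_1>4\leftrightarrow 0<r<\tfrac14$) then reproduces Table \ref{Table, lwp for H-S} exactly, and the solution map is real analytic because that is what the contraction-mapping argument behind Theorem \ref{Thm, lwp for ckdv} delivers.

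For the global statements I would first record two conservation laws of (\ref{H-S system}). Multiplying the first equation by $u$, the second by $v$, integrating over $\m{R}$, and noting that the self-interaction terms integrate to zero while the coupling terms combine, one finds that
\be\label{HS Q0}
Q_0(u,v)=3\|u\|_{L^2}^2+c_{12}\|v\|_{L^2}^2
\ee
is conserved. A longer but entirely routine computation --- differentiate $\int u_x^2$, $\int v_x^2$, $\int u^3$, $\int uv^2$ in time and take the linear combination that annihilates the four cubic boundary-type integrals $\int u_x^3$, $\int u_x v_x^2$, $\int u_{xx}vv_x$, $\int uu_xv^2$ --- shows that, for every $a_1\neq 1$,
\be\label{HS Q1}
Q_1(u,v)=\int_{\m{R}}\Big(u_x^2+\frac{c_{12}}{1-a_1}\,v_x^2-2u^3-\frac{c_{12}}{1-a_1}\,uv^2\Big)\,dx
\ee
is also conserved. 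When $c_{12}>0$, (\ref{HS Q0}) is equivalent to $\|(u,v)\|_{\mathcal{H}^0(\m{R})}^2$; when in addition $a_1<1$ (in particular when $a_1=\tfrac14$) the quadratic part of (\ref{HS Q1}) is $\|u_x\|_{L^2}^2$ plus a positive multiple of $\|v_x\|_{L^2}^2$, and by the Gagliardo--Nirenberg inequality the two cubic terms in (\ref{HS Q1}) are bounded by a small multiple of $\|u_x\|_{L^2}^2+\|v_x\|_{L^2}^2$ plus a function of $\|u\|_{L^2}$ and $\|v\|_{L^2}$ alone.

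The global results then follow in the standard way. If $a_1\notin\{\tfrac14,1\}$, then by the first part the system is locally well-posed in $\mathcal{H}^0(\m{R})$; since $c_{12}>0$ makes (\ref{HS Q0}) a conserved quantity comparable to $\|(u,v)\|_{\mathcal{H}^0(\m{R})}^2$, the local theory iterates to global well-posedness in $\mathcal{H}^0(\m{R})$, and persistence of regularity in the Bourgain spaces (the bilinear estimates hold at every level $s$) upgrades this to $s\geq 0$. If $a_1=\tfrac14$, the first part gives local well-posedness only for $s\geq\tfrac34$, but $\tfrac34<1$, so the system is locally well-posed in $\mathcal{H}^1(\m{R})$; conservation of (\ref{HS Q0}) bounds $\|u\|_{L^2}$ and $\|v\|_{L^2}$, conservation of (\ref{HS Q1}) together with the coercivity just noted then bounds $\|u_x\|_{L^2}$ and $\|v_x\|_{L^2}$, which is a global $\mathcal{H}^1$ a priori bound, hence global well-posedness in $\mathcal{H}^1(\m{R})$, again pushed to $s\geq 1$ by persistence. (Conservation of $Q_0$ and $Q_1$ is first checked for smooth solutions and then extended to $\mathcal{H}^0(\m{R})$, resp.\ $\mathcal{H}^1(\m{R})$, data by continuous dependence.) In either case the global flow inherits analyticity from the local flow: on any $[0,T]$ it is a composition of finitely many local flows, the number being bounded uniformly on bounded subsets of the data space.

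The steps requiring genuine care rather than bookkeeping are: verifying the sign of the $v_x^2$-coefficient in (\ref{HS Q1}) at $a_1=\tfrac14$ --- this is exactly where $c_{12}>0$ is used --- and explaining why $a_1=1$ (equivalently $r=1$) is excluded from the global results: there the derivation of (\ref{HS Q1}) degenerates (the factor $\tfrac{1}{1-a_1}$ diverges, reflecting the resonance between two identical dispersion relations), while the local theory yields only $s>0$, so $Q_0$, which lives at $s=0$, cannot be fed back into the iteration; for the same reason the range $\tfrac34\leq s<1$ when $a_1=\tfrac14$, and all of $s\in\m{R}$ when $c_{12}\leq 0$, are not covered by this argument. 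Everything genuinely new is contained in Theorem \ref{Thm, lwp for ckdv} and the sharp bilinear estimates behind it; the present theorem is an application.
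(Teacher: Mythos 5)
Your proposal is correct and follows exactly the route the paper takes: the LWP claims are read off row by row from Theorem \ref{Thm, lwp for ckdv} with the Hirota--Satsuma coefficients (\ref{H-S system, coef}), and the GWP claims come from the two conserved functionals $E_1$ and $E_2$ in (\ref{H-S energy}), which coincide with your $Q_0$ and $Q_1$ up to the constant factors $3$ and $1-a_1$. The paper leaves both steps to the reader; your write-up simply makes explicit the case-matching of $a_1$ to $r=1/a_1$ and the coercivity/persistence argument, including the correct explanation of why $a_1=1$ and $a_1=\frac14$ receive special treatment.
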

	
	We finally turn to the Gear-Grimshaw  system (\ref{G-G system}) and introduce the condition (\ref{G-G, r=1/4}) for convenience.
	\be\label{G-G, r=1/4}
	\rho_2\sigma_3^2\leq \frac{9}{25} \quad\text{and}\quad \rho_{1}^2+\frac{25\rho_2\sigma_3^2-17}{4}\rho_1+1=0.\ee
	
	\begin{theorem}\label{Thm, G-G WP}
		The Gear-Grimshaw system (\ref{G-G system}), where $\rho_1,\,\rho_2> 0$, is locally (resp. globally) analytically well-posed in $\mathcal{H}^{s}(\m{R})$ for any case in Table \ref{Table, lwp for G-G} (resp. Table \ref{Table, gwp for G-G}).
		\begin{table}[!ht]
			\begin{floatrow}
				\capbtabbox{
					\renewcommand\arraystretch{2}
					\begin{tabular}{|c|c|c|} \hline
						Case & $\rho_1$, $\rho_2$ and $\sigma_{i} (1\leq i\leq 4)$  & $s$    \\ \hline
						(1) & $\sigma_3=0$, $\rho_1=1$ & $s>-\frac34$  \\\hline
						(2) & $\rho_2\sigma_3^2>1$ & $s>-\frac34$  \\\hline
						(3) & $\rho_2\sigma_3^2<1$, (\ref{G-G, r=1/4}) fails & $s\geq 0$ \\ \hline
						(4) & $\rho_2\sigma_3^2<1$, (\ref{G-G, r=1/4}) holds & $s\geq\frac{3}{4}$ \\\hline
					\end{tabular}	 
				}
				{\caption{LWP Results}
					\label{Table, lwp for G-G}
				}
				\capbtabbox{
					\renewcommand\arraystretch{2}
					\begin{tabular}{|c|c|c|} \hline
						Case & $\rho_1$, $\rho_2$ and $\sigma_{i} (1\leq i\leq 4)$  & $s$    \\ \hline
						(1) & $\rho_2\sigma_3^2\neq 1$, (\ref{G-G, r=1/4}) fails & $s\geq 0$  \\\hline
						(2) & $\rho_2\sigma_3^2\neq 1$, (\ref{G-G, r=1/4}) holds & $s\geq 1$  \\\hline
					\end{tabular}	 
				}
				{\caption{GWP Results}
					\label{Table, gwp for G-G}
				}
			\end{floatrow}
		\end{table}
	\end{theorem}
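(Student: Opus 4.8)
The plan is to derive Theorem~\ref{Thm, G-G WP} as an application of the general local theory of Theorem~\ref{Thm, lwp for ckdv} together with the conservation laws of~(\ref{G-G system}). First I would carry out the reduction to the diagonal normal form~(\ref{ckdv, coef form}). As recorded after~(\ref{A1 in G-G system}), when $\rho_2\sigma_3^2\neq 1$ the matrix $A_1$ has two nonzero real eigenvalues $a_1,a_2$ and is diagonalizable over $\m{R}$, so the substitution $\bp u\\v\ep\mapsto M\bp u\\v\ep$ converts~(\ref{G-G system}) into the form~(\ref{ckdv, coef form}). Two structural points must be tracked along the way: (i) the nonlinearity of~(\ref{G-G system}) equals $\p_x$ of a vector-valued quadratic form in $(u,v)$, and a linear change of variables maps quadratic forms to quadratic forms, so the transformed nonlinearity is again $\p_x$ of a quadratic form, i.e.\ the normal form is automatically in \emph{divergence form} ($d_{11}=d_{12}$, $d_{21}=d_{22}$); (ii) the lower-order matrix of~(\ref{ckdv, coef form}) becomes $M^{-1}\bp 0&0\\0&\sigma_4/\rho_1\ep M$, which is full in general, but when $A_1$ equals the identity matrix $\mathrm{I}$ one may take $M=\mathrm{I}$, and then $b_{12}=b_{21}=0$. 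Computing
\[ a_1+a_2=\mathrm{tr}\,A_1=\frac{\rho_1+1}{\rho_1},\qquad a_1a_2=\det A_1=\frac{1-\rho_2\sigma_3^2}{\rho_1}, \]
I set $r=a_2/a_1$, noting that $r$ is well-defined only up to $r\leftrightarrow 1/r$, which in Table~\ref{Table, lwp for ckdv} merely interchanges $u$ and $v$.

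Next I would locate $r$ and read off the local results. If $\rho_2\sigma_3^2>1$ then $a_1a_2<0$, so $r<0$ and we land in case~(1) of Table~\ref{Table, lwp for ckdv}, whose two subcases both give LWP for every $s>-\tfrac34$; this is case~(2) of Table~\ref{Table, lwp for G-G}. If $\sigma_3=0$ and $\rho_1=1$ then $A_1=\mathrm{I}$, so (with $M=\mathrm{I}$) $r=1$, $b_{12}=b_{21}=0$, and the system is in divergence form: the first subcase of case~(5) of Table~\ref{Table, lwp for ckdv} applies and gives $s>-\tfrac34$, which is case~(1) of Table~\ref{Table, lwp for G-G}. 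Finally, if $\rho_2\sigma_3^2<1$ (the possibility $(\rho_1,\sigma_3)=(1,0)$ being already covered, with a stronger conclusion, by the previous case), then $a_1a_2>0$ and the discriminant $\mathrm{tr}^2A_1-4\det A_1=\big((\rho_1-1)^2+4\rho_1\rho_2\sigma_3^2\big)/\rho_1^2$ is strictly positive, so $r>0$ with $r\neq 1$. An elementary computation gives $r\in\{\tfrac14,4\}\iff \mathrm{tr}^2A_1=\tfrac{25}{4}\det A_1$, and clearing denominators turns this into $\rho_1^2+\tfrac{25\rho_2\sigma_3^2-17}{4}\rho_1+1=0$, the second relation in~(\ref{G-G, r=1/4}); the inequality $\rho_2\sigma_3^2\le\tfrac{9}{25}$ is precisely the condition for this quadratic in $\rho_1$ to possess a positive root. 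Hence, if~(\ref{G-G, r=1/4}) holds then $r\in\{\tfrac14,4\}$, placing us in case~(3) or~(7) of Table~\ref{Table, lwp for ckdv}, both of whose subcases give LWP at least for $s\ge\tfrac34$ (case~(4) of Table~\ref{Table, lwp for G-G}); and if~(\ref{G-G, r=1/4}) fails then $r\in(0,\infty)\setminus\{\tfrac14,1,4\}$, placing us in one of cases~(2),(4),(6),(8), each subcase of which gives LWP at least for $s\ge0$ (case~(3) of Table~\ref{Table, lwp for G-G}). The key bookkeeping observation is that in every row of Table~\ref{Table, lwp for G-G} the asserted threshold is the \emph{weaker} of the two thresholds in the relevant row(s) of Table~\ref{Table, lwp for ckdv}, so apart from case~(1), where the divergence-form and $b_{12}=b_{21}=0$ structure was checked above, there is no need to decide whether the transformed coefficients satisfy any of the special algebraic conditions of Table~\ref{Table, lwp for ckdv}.

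For the global statements I would first establish, from the usual energy identity for~(\ref{G-G system}) (in which every derivative-quadratic and every cubic term integrates to zero, for all admissible parameters), that $E_0(u,v):=\i_{\m{R}}(\rho_2u^2+\rho_1v^2)\,dx$ is conserved; since $\rho_1,\rho_2>0$, $E_0\simeq\|(u,v)\|_{L^2}^2$. When~(\ref{G-G, r=1/4}) fails, the local analysis above gives LWP at $s=0$, so conservation of $E_0$ yields GWP in $\mcal{H}^0(\m{R})$, and persistence of regularity in the Fourier-restriction framework (the local existence time depending only on the $L^2$-size of the data) upgrades this to GWP in $\mcal{H}^s(\m{R})$ for every $s\ge 0$, which is case~(1) of Table~\ref{Table, gwp for G-G}. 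When~(\ref{G-G, r=1/4}) holds we automatically have $\rho_2\sigma_3^2\le\tfrac{9}{25}<1$, so the Hamiltonian $E_1$ of~(\ref{G-G system}) (cf.~\cite{BPST92}) has positive-definite derivative-quadratic part, with matrix $\bp \rho_2 & \rho_2\sigma_3 \\ \rho_2\sigma_3 & 1 \ep$ of determinant $\rho_2(1-\rho_2\sigma_3^2)>0$; together with $E_0$ (used to absorb the cubic terms via Gagliardo--Nirenberg) it bounds $\|(u,v)(t)\|_{H^1}$, and with LWP in $\mcal{H}^1(\m{R})$ (permissible since $1\ge\tfrac34$) and persistence of regularity this gives GWP in $\mcal{H}^s(\m{R})$ for every $s\ge 1$, which is case~(2) of Table~\ref{Table, gwp for G-G}.

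The analytic heart of the theorem is already contained in Theorem~\ref{Thm, lwp for ckdv} (equivalently, in the sharp bilinear estimates), so the remaining work is essentially bookkeeping; the part requiring genuine care is the linear algebra of the reduction — confirming that $r\in\{\tfrac14,4\}$ is equivalent to~(\ref{G-G, r=1/4}) including the positivity and reality of the roots of the associated quadratic in $\rho_1$, that $r=1$ forces $A_1=\mathrm{I}$ so that one may keep $b_{12}=b_{21}=0$, and that in each row the advertised threshold is indeed the weaker Table~\ref{Table, lwp for ckdv} threshold — together with the standard but parameter-sensitive observation that for $\rho_2\sigma_3^2>1$ the Hamiltonian is indefinite, so global control above $L^2$ must come from persistence of regularity rather than from a coercive energy.
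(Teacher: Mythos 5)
Your proof is correct and follows essentially the same route as the paper: diagonalize $A_1$, identify $r$ and its relation to $\rho_2\sigma_3^2$ and to condition~(\ref{G-G, r=1/4}), invoke Theorem~\ref{Thm, lwp for ckdv} for LWP, and use the conserved energies $E_1,E_2$ in~(\ref{G-G energy}) plus persistence of regularity for GWP. The only presentational differences are that you work with the trace and determinant of $A_1$ rather than the explicit eigenvalue formulas~(\ref{eigen of A_1, G-G}), and you spell out the bookkeeping (divergence form preserved under linear change of variables, weaker threshold always suffices, positive definiteness of the derivative-quadratic form when $\rho_2\sigma_3^2<1$) that the paper states without detail.
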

	
	It  should be pointed out that  Case (1) in Table \ref{Table, lwp for G-G} is trivial since it directly follows from the proof of the single KdV case.
	
	\newpage
	
	\subsection{Remarks}
	
	A few remarks are now in order.
	\begin{remark} \label{Re, gwp}
		While the results presented in  Section 1.3  provides a  rather thorough description of the analytically well-posedness in  $\mathcal{H}^s (\mathbb{R})$  for the systems (\ref{ckdv, coef form}), the study of the well-posedness of the Cauchy problem of (\ref{ckdv, coef form}) in  $\mathcal{H}^s (\mathbb{R})$ is far from over in comparison to the study of the KdV equation \eqref{KdV eq}.	We list below a few problems  among many to be investigated.
	\end{remark} 
	\begin{itemize}
		\item  Question 1.1: For the locally analytically well-posedness results of (\ref{ckdv, coef form}) listed in Table 1,  it  requires $s> -\frac34 $  in Cases (1), (2), (5) and (8).  {\em Can those results be strengthened to include $s=-\frac34$?}
		
		\item  Question 1.2:  The  locally analytically well-posedness results of the  systems (\ref{ckdv, coef form}) listed in Table  1 are sharp in the sense that the needed bilinear estimates, a key ingredient in the proofs,  fail if $s$ is less than the corresponding critical index $s^*$. 
		{\em Is the Cauchy problem of (\ref{ckdv, coef form})  analytically ill-posed in the space $\mathcal{H}^s (\m{R})$ for any $s$ which is less than the corresponding critical index $s^*$? }
		
		\item Question 1.3:  {\em  Can those locally analytically well-posedness results of the systems (\ref{ckdv, coef form}) listed in Table 1 be strengthened to be globally analytically well-posed results?}
		
	\end{itemize}

	\begin{remark}   As hinted by the study of the single KdV equation,  the answers to both Question 1.1 and Question 1.2  will most likely be  positive.  For Question 1.1,  some more subtly modified Bourgain spaces may need to be constructed,  see e.g. \cite{Guo09, Kis09}.  For Question1.2, some counter examples are needed to show that the solution map fails to be smooth if $s$ is less than the corresponding critical index $s^*$.  We leave this study to future works since the current paper is already long.
	\end{remark}

	\begin{remark}  For Question 1.3,  as long as one can  establish a priori global $\mathcal{ H}^s(\mathbb{R})$  estimates for solutions of the system (\ref{ckdv, coef form}), the GWP of (\ref{ckdv, coef form})  in  $\mathcal{H}^s(\mathbb{R})$ follows from  the corresponding LWP result.
		In particular,  when there are conserved energy at certain regularity level, the corresponding GWP can be easily verified. For example, we also include some GWP results in Theorem \ref{Thm, M-B WP}--\ref{Thm, G-G WP}.  But if the regularity considered in the well-posedness problem is lower than the level provided by the available conserved energy,  one may need to apply other methods, such as the I-method \cite{CKSTT03}, to establish the GWP. 
	\end{remark}
	
	
	\begin{remark}
		The single KdV equation has also been  intensively studied  from control point of views for its controllability and stabilizability (the interested readers are referred to \cite{contr-1, control-2, control-3, control-4, control-5, control-6, control-7, control-8} and the references therein for  an  overview of this subject). Various tools developed in the study of the welll-posedness of the single KdV equation  have played important roles in studying control theory of the KdV equation. By contrast, there are few studies of the  systems (\ref{ckdv, coef form}) from control points of view. We expect the results and the tools obtained and developed in the study of the well-posedness of the Cauchy problem of (\ref{ckdv, coef form}) will stimulate and play important roles in further studies of the control theory for the coupled KdV-KdV systems.
	\end{remark} 
	
	\subsection{Organization}
	
	The remaining of the paper is organized as follows. In Section \ref{Sec, Pre}, some linear estimates are recalled or proved as a preparation. In Section \ref{Sec, results on bilin est}, we present our main results on the bilinear estimates which are the key ingredients
	in the proof of the main well-posedness result: Theorem \ref{Thm, lwp for ckdv}. The proofs of these bilinear estimates will be postponed to Sections \ref{Sec, proof for bilin est} and \ref{Sec, sharp of bilin est}. In Section \ref{Sec, proof of wp}, we prove Theorem \ref{Thm, lwp for ckdv}, and its consequences, Theorem \ref{Thm, weak nonlin ckdv}--\ref{Thm, G-G WP}. Section \ref{Sec, proof for bilin est} is devoted to establish the various bilinear estimates, Theorem \ref{Thm, d2, neg} and \ref{Thm, bilin est, general}, presented in Section \ref{Sec, results on bilin est}. Finally, Section \ref{Sec, sharp of bilin est} is dedicated to justify Theorem \ref{Thm, sharp d2, neg} and \ref{Thm, sharp bilin est, general} which exposit the sharpness of the various bilinear estimates.
	
	\section{Preliminaries}
	\label{Sec, Pre}
	
	Let $\psi\in C_{0}^{\infty}(\m{R})$ be a bump function supported on $[-2,2]$ with $\psi=1$ on $[-1,1]$. We will use $C$ and $C_{i}(i\geq 1)$ to denote the constants. Moreover, $C=C(a,b\dots)$ means the constant C only depends on $a,b\dots$. We use $A\lesssim B$ to denote an estimate of the form $A\leq CB$. The notation $A\gtrsim B$ is used similarly. In addition, we will write $A\sim B$  if  $A\lesssim B$ and $B\lesssim A$. Finally, the notation $\la\cdot\ra$ means $=1+|\cdot|$.
	
	
	Consider the Cauchy problem of the   following linear KdV equation with  $\a,\b\in\m{R}$ and $\a\neq 0$.
	\be\label{linear eq}
	\left\{\begin{array}{ll}
		w_{t}+\alpha w_{xxx}+\beta w_{x}=0, & x\in\m{R},\, t\in\m{R},\\
		w(x,0)=w_{0}(x).
	\end{array}\right.\ee
	For any $w_0 \in H^s (\m{R})$, it admits a unique solution  $w\in C_b (\m{R}; H^s (\m{R}))$ for any $s\in \m{R}$, which can be written as 
	\be\label{semigroup op}
	w(x,t)= S^{\alpha,\beta}(t)w_{0}(x)=\int_{\m{R}}e^{i\xi x}e^{i\phi^{\alpha,\beta}(\xi)t}\,\widehat{w_{0}}(\xi)d\xi,\ee
	where $\phi^{\a,\b}(\xi) = \alpha \xi ^3 -\beta \xi $.

	\begin{lemma}\label{Lemma, lin est for KdV}
		For any $\a\neq 0$, $b>\frac{1}{2}$, $s,\,\b\in\m{R}$, there exists $C=C(b)$ such that
		\be\label{lin est for free KdV}
		\|\psi(t)S^{\a,\b}(t)w_{0}\|_{X^{\a,\b}_{s,b}}\leq C\| w_{0}\|_{H^{s}(\m{R})}\ee
		and
		\be\label{lin est for Duhamel int term}
		\Big\| \psi(t)\int_{0}^{t}S^{\a,\b}(t-t')F(t')dt' \Big\|_{X^{\a,\b}_{s,b}}\leq C\| F\|_{X^{\a,\b}_{s,b-1}}.\ee
	\end{lemma}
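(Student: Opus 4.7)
My plan is to follow the standard Bourgain-space linear estimate argument (as in Bourgain, Kenig-Ponce-Vega, or Tao's textbook), with only cosmetic changes to track the extra transport parameter $\beta$ in the phase function $\phi^{\alpha,\beta}(\xi)=\alpha\xi^3-\beta\xi$. Throughout, I will work on the Fourier side, using that the space-time Fourier transform of $\psi(t)S^{\alpha,\beta}(t)w_0(x)$ is
\[
\mathcal{F}_{x,t}\bigl[\psi(t)S^{\alpha,\beta}(t)w_0\bigr](\xi,\tau) \;=\; \widehat{w_0}(\xi)\,\widehat{\psi}\bigl(\tau-\phi^{\alpha,\beta}(\xi)\bigr),
\]
which is immediate from \eqref{semigroup op}.

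For \eqref{lin est for free KdV}, I would plug the above identity into the definition \eqref{Fourier R-norm} of the $X^{\alpha,\beta}_{s,b}$-norm and make the change of variable $\sigma=\tau-\phi^{\alpha,\beta}(\xi)$ in the $\tau$ integral (for each fixed $\xi$). The factor $\langle\tau-\phi^{\alpha,\beta}(\xi)\rangle^{2b}$ becomes $\langle\sigma\rangle^{2b}$ and separates from the $\xi$ integral, giving
\[
\|\psi(t)S^{\alpha,\beta}(t)w_0\|_{X^{\alpha,\beta}_{s,b}}^{2} \;=\; \Bigl(\int\langle\sigma\rangle^{2b}|\widehat{\psi}(\sigma)|^{2}\,d\sigma\Bigr)\,\|w_0\|_{H^s(\mathbb{R})}^{2}.
\]
Since $\psi$ is Schwartz the $\sigma$-integral is finite (and depends only on $b$), yielding \eqref{lin est for free KdV}.

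For \eqref{lin est for Duhamel int term} I would write $F$ in Fourier: $F(x,t)=\iint e^{ix\xi}e^{it\tau}\widehat{F}(\xi,\tau)\,d\xi\,d\tau$. Then a direct computation gives
\[
\int_{0}^{t}S^{\alpha,\beta}(t-t')F(t')\,dt' \;=\; \iint e^{ix\xi}\,\frac{e^{it\tau}-e^{it\phi^{\alpha,\beta}(\xi)}}{i\bigl(\tau-\phi^{\alpha,\beta}(\xi)\bigr)}\,\widehat{F}(\xi,\tau)\,d\xi\,d\tau.
\]
Following the standard recipe, I would decompose $\widehat{F}$ as $\widehat{F}=\widehat{F}_{\mathrm{lo}}+\widehat{F}_{\mathrm{hi}}$ according to whether $|\tau-\phi^{\alpha,\beta}(\xi)|\le 1$ or $>1$. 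On the high-modulation piece, the divisor is harmless and the integrand splits as the difference of two terms: one of the form $\psi(t)\int e^{ix\xi}e^{it\tau}\frac{\widehat{F}_{\mathrm{hi}}(\xi,\tau)}{i(\tau-\phi^{\alpha,\beta}(\xi))}\,d\xi d\tau$, which is handled by an elementary computation of its $X^{\alpha,\beta}_{s,b}$-norm (again via the change of variable $\sigma=\tau-\phi^{\alpha,\beta}$), and one of the form $\psi(t)S^{\alpha,\beta}(t)g$ for an auxiliary $g\in H^s$ whose norm is bounded by $\|F_{\mathrm{hi}}\|_{X^{\alpha,\beta}_{s,b-1}}$, so that \eqref{lin est for free KdV} applies. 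On the low-modulation piece I would Taylor-expand $e^{it(\tau-\phi^{\alpha,\beta}(\xi))}=\sum_{k\ge 1}\frac{(it)^k}{k!}(\tau-\phi^{\alpha,\beta}(\xi))^{k-1}\cdot(\tau-\phi^{\alpha,\beta}(\xi))$, which cancels the divisor; each resulting term has the form $\frac{(it)^{k}}{k!}\psi(t)e^{it\phi^{\alpha,\beta}(\xi)}$ integrated against $(\tau-\phi^{\alpha,\beta})^{k-1}\widehat{F}_{\mathrm{lo}}$. Since $\psi$ is Schwartz, $t^{k}\psi(t)$ remains Schwartz with rapidly shrinking weight in $k$, and on the low-modulation region $|(\tau-\phi^{\alpha,\beta})^{k-1}|\le 1$, so the $k$-sum converges absolutely and each term is bounded by \eqref{lin est for free KdV}.

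The argument is entirely classical, so I do not expect a genuine obstacle; the only care point is that the hypothesis $b>\tfrac12$ is needed to guarantee that the $\sigma$-weight $\langle\sigma\rangle^{2b}|\widehat{\psi}(\sigma)|^{2}$ is integrable (trivial for Schwartz $\psi$) and, more subtly, that in the high-modulation analysis the auxiliary datum $g$ built from $\widehat{F}_{\mathrm{hi}}/(\tau-\phi^{\alpha,\beta})$ actually belongs to $H^s(\mathbb{R})$ — this uses $\int\langle\sigma\rangle^{-2}\,d\sigma<\infty$ together with Cauchy-Schwarz against the weight $\langle\sigma\rangle^{2(b-1)}$, which requires $b-1<-\tfrac12$, i.e.\ $b<\tfrac32$; together with $b>\tfrac12$ this is consistent with the hypothesis. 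The constants depend only on $b$ (not on $\alpha,\beta,s$) because the change of variable $\sigma=\tau-\phi^{\alpha,\beta}(\xi)$ is measure-preserving in $\tau$ for each fixed $\xi$ regardless of the values of $\alpha$ and $\beta$.
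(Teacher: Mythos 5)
Your argument is the standard Bourgain/Kenig--Ponce--Vega linear estimate, which is exactly what the paper delegates to (it simply cites Lemma 3.1 and Lemma 3.3 of \cite{KPV93Duke} without reproducing details), and the computations you describe for the free and Duhamel pieces are correct. One small inaccuracy: the constraint $b<\tfrac32$ you extract in the high-modulation analysis is spurious -- if one Cauchy--Schwarzes $\bigl|\int_{|\sigma|\ge 1}\tfrac{h(\sigma)}{\sigma}\,d\sigma\bigr|\le\bigl(\int_{|\sigma|\ge 1}\tfrac{\langle\sigma\rangle^{2(1-b)}}{\sigma^2}\,d\sigma\bigr)^{1/2}\bigl(\int\langle\sigma\rangle^{2(b-1)}|h|^2\,d\sigma\bigr)^{1/2}$, the first factor behaves like $\int_{|\sigma|\ge 1}\langle\sigma\rangle^{-2b}\,d\sigma$ and only requires $b>\tfrac12$, so the lemma indeed holds for all $b>\tfrac12$ as stated.
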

	\begin{proof}
		The proof follows exactly as Lemma 3.1 and Lemma 3.3 in \cite{KPV93Duke}.
	\end{proof}
	
	Strictly speaking, the constant in the above lemma also depends on $\psi$.  However,  we will not track the dependence of the constant on it  since $\psi$ is a fixed bump function throughout this paper.
	
	\begin{lemma}\label{Lemma, est for lin term}
		Let $\a_{1},\,\a_{2}\in \m{R}\setminus\{0\}$ with $\a_{1}\neq \a_{2}$. Then there exist $\eps=\eps(\a_{1},\a_{2})$ and $C=C(\a_{1},\a_{2})$ such that for any $s\in\m{R}$, $\frac{1}{2}<b\leq \frac{2}{3}$,  and for any $\b_{1}$ and $\b_{2}$ with $|\b_2-\b_{1}|\leq\eps$,
		\be\label{est for lin term}
		\|\p_{x}w\|_{X^{\a_{2},\b_{2}}_{s,b-1}}\leq C\,\|w\|_{X^{\a_{1},\b_{1}}_{s,b}}, \quad\forall\,w\in X^{\a_{1},\b_{1}}_{s,b}.\ee
	\end{lemma}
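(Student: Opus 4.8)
The plan is to reduce the estimate \eqref{est for lin term} to a pointwise comparison of the two weights $\langle\tau-\phi^{\a_1,\b_1}(\xi)\rangle$ and $\langle\tau-\phi^{\a_2,\b_2}(\xi)\rangle$ on the Fourier side, after absorbing the derivative $\p_x$ (which contributes a factor $|\xi|$) against the gain of one power in the modulation weight (the drop from $b$ to $b-1$). Writing $g(\xi,\tau)=\langle\xi\rangle^s\langle\tau-\phi^{\a_1,\b_1}(\xi)\rangle^b\,\wh w(\xi,\tau)$, the right-hand side is $\|g\|_{L^2}$, and the left-hand side equals
\[
\Big\|\,\langle\xi\rangle^s\,|\xi|\,\frac{\langle\tau-\phi^{\a_2,\b_2}(\xi)\rangle^{b-1}}{\langle\tau-\phi^{\a_1,\b_1}(\xi)\rangle^{b}}\,g(\xi,\tau)\,\Big\|_{L^2}.
\]
So it suffices to show that the multiplier
\[
m(\xi,\tau)=|\xi|\,\frac{\langle\tau-\phi^{\a_2,\b_2}(\xi)\rangle^{b-1}}{\langle\tau-\phi^{\a_1,\b_1}(\xi)\rangle^{b}}
\]
is bounded on $\m{R}^2$ by $C(\a_1,\a_2)$, uniformly over $\frac12<b\le\frac23$ and over $|\b_2-\b_1|\le\eps$. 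Since $s$ plays no role (the $\langle\xi\rangle^s$ cancels), this is where the whole statement lives.

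To bound $m$, first I would split into low and high frequencies. For $|\xi|\lesssim 1$ we have $|\xi|\lesssim 1$ and $\langle\tau-\phi^{\a_2,\b_2}(\xi)\rangle^{b-1}\le 1$ (as $b-1<0$) while $\langle\tau-\phi^{\a_1,\b_1}(\xi)\rangle^{-b}\le 1$, so $m\lesssim 1$ trivially. For $|\xi|\gtrsim 1$ the key algebraic fact is that the two phase functions differ by a lower-order amount:
\[
\phi^{\a_1,\b_1}(\xi)-\phi^{\a_2,\b_2}(\xi)=(\a_1-\a_2)\xi^3-(\b_1-\b_2)\xi,
\]
which for $|\xi|\gtrsim1$ satisfies $|\phi^{\a_1,\b_1}(\xi)-\phi^{\a_2,\b_2}(\xi)|\gtrsim |\a_1-\a_2|\,|\xi|^3$ once $|\b_1-\b_2|\le\eps$ is small relative to $|\a_1-\a_2|$ and $|\xi|$ is large (the cubic term dominates). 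Now I distinguish two regimes according to the size of $\sigma:=\tau-\phi^{\a_1,\b_1}(\xi)$. If $|\sigma|\ge \tfrac12|\a_1-\a_2|\,|\xi|^3$, then $\langle\tau-\phi^{\a_1,\b_1}(\xi)\rangle^{-b}\lesssim (|\a_1-\a_2||\xi|^3)^{-b}$, while $\langle\tau-\phi^{\a_2,\b_2}(\xi)\rangle^{b-1}\le1$, so $m\lesssim |\xi|\,(|\a_1-\a_2||\xi|^3)^{-b}\lesssim_{\a_1,\a_2} |\xi|^{1-3b}$, which is bounded for $|\xi|\gtrsim1$ since $1-3b<0$ when $b>\tfrac13$. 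If instead $|\sigma|< \tfrac12|\a_1-\a_2||\xi|^3$, then by the triangle inequality and the lower bound on the phase difference, $|\tau-\phi^{\a_2,\b_2}(\xi)|\ge |\phi^{\a_1,\b_1}(\xi)-\phi^{\a_2,\b_2}(\xi)|-|\sigma|\gtrsim |\a_1-\a_2||\xi|^3$, so now the second weight is large: $\langle\tau-\phi^{\a_2,\b_2}(\xi)\rangle^{b-1}\lesssim (|\a_1-\a_2||\xi|^3)^{b-1}$, while $\langle\tau-\phi^{\a_1,\b_1}(\xi)\rangle^{-b}\le1$, giving $m\lesssim |\xi|\,(|\a_1-\a_2||\xi|^3)^{b-1}\lesssim_{\a_1,\a_2} |\xi|^{3b-2}$, bounded for $|\xi|\gtrsim1$ since $3b-2\le0$ when $b\le\tfrac23$. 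In both regimes the implied constant depends only on $\a_1,\a_2$ (through $|\a_1-\a_2|$ and the threshold $|\xi|\gtrsim1$ at which the cubic dominates), which is exactly the claimed dependence; choosing $\eps=\eps(\a_1,\a_2)$ small enough makes the ``cubic dominates'' step work for all $|\xi|$ above a fixed $\a$-dependent threshold, and the remaining bounded range of $|\xi|$ is handled as in the low-frequency case.

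The main obstacle — really the only delicate point — is the interplay of the two restrictions $b\le\tfrac23$ and the smallness of $|\b_2-\b_1|$: one must choose the frequency threshold $\xi_0(\a_1,\a_2)$ and then $\eps(\a_1,\a_2)$ in the right order so that for $|\xi|\ge\xi_0$ the difference $(\a_1-\a_2)\xi^3-(\b_1-\b_2)\xi$ is genuinely comparable to $(\a_1-\a_2)\xi^3$, uniformly in $\b_1,\b_2$; and to see that it is precisely the upper constraint $b\le\tfrac23$ (equivalently $3b-2\le0$) that is forced by the ``resonant'' regime $|\sigma|$ small, whereas the lower constraint $b>\tfrac12$ is more than enough for the ``non-resonant'' regime (which only needs $b>\tfrac13$). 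Everything else is the standard unwinding of the $X^{\a,\b}_{s,b}$ norms via Plancherel and the elementary inequality $\langle a+b\rangle\lesssim\langle a\rangle\langle b\rangle$ where needed. I would present the argument as: (i) reduce to boundedness of $m$; (ii) record the phase-difference lemma with the choice of $\eps$; (iii) the two-regime case analysis above.
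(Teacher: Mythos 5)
Your proposal is correct and takes essentially the same route as the paper: both reduce the estimate to boundedness of the multiplier $m(\xi,\tau)=|\xi|\,\langle L_1\rangle^{-b}\langle L_2\rangle^{-(1-b)}$ (the paper arrives at the same supremum bound via duality plus H\"older, which is equivalent to your direct multiplier argument), and both exploit the lower bound $|L_1-L_2|\gtrsim|\a_1-\a_2|\,|\xi|^3$ for $|\xi|\gtrsim 1$ and $|\b_2-\b_1|\le\eps$. The only cosmetic difference is in how the weights are played off against $|\xi|$: you split into the two regimes depending on which of $|L_1|$, $|L_2|$ is comparable to $|\xi|^3$, whereas the paper uses the single inequality $\langle L_1\rangle^b\langle L_2\rangle^{1-b}\ge(\langle L_1\rangle\langle L_2\rangle)^{1/3}\ge\langle L_1-L_2\rangle^{1/3}$ (valid since $\min\{b,1-b\}\ge\tfrac13$ on $(\tfrac12,\tfrac23]$) to handle both at once; your version makes it slightly more transparent that the binding constraint is $b\le\tfrac23$ from the small-$|L_1|$ regime, with $b>\tfrac13$ already enough in the other.
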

	\begin{proof}
		By duality and Plancherel identity, it is equivalent to prove for any $g\in X^{\a_{2},\b_{2}}_{-s,1-b}$,
		\[\bigg|\i_{\m{R}}\i_{\m{R}}\xi \wh{w}(\xi,\tau)\wh{g}(\xi,\tau)\,d\xi\,d\tau\bigg| \leq C\|w\|_{X^{\a_{1},\b_{1}}_{s,b}}\|g\|_{X^{\a_{2},\b_{2}}_{-s,1-b}}. \]
		Let 
		\[f_{1}(\xi,\tau)=\la\xi\ra^{s}\la L_{1}\ra^{b}\wh{w}(\xi,\tau)\quad\text{and}\quad f_{2}(\xi,\tau)=\la\xi\ra^{-s}\la L_{2}\ra^{1-b}\wh{g}(\xi,\tau),\]
		with  $L_{i}=\tau-\phi^{\a_{i},\b_{i}}(\xi)$ for $i=1,2$.  It reduces to show 
		\be\label{weighted l2 for lin}
		\bigg|\i_{\m{R}}\i_{\m{R}}\frac{\xi f_{1}(\xi,\tau)f_{2}(\xi,\tau)}{\la L_{1}\ra^{b}\la L_{2}\ra^{1-b}}\,d\xi\,d\tau\bigg| \leq C\prod_{i=1}^{2}\|f_{i}\|_{L^{2}(d\xi d\tau)}.\ee
		By Holder's inequality, it suffices to verify 
		\be\label{ineq for lin}
		\sup_{\xi,\tau\in\m{R}}\frac{|\xi|}{\la L_{1}\ra^{b}\la L_{2}\ra^{1-b}}\leq C.\ee
		If $|\xi|\leq 1$, then (\ref{ineq for lin}) holds for $C=1$. If  $|\xi|>1$, then it follows from $\frac{1}{2}<b\leq \frac{2}{3}$ that
		\[\la L_{1}\ra^{b}\la L_{2}\ra^{1-b}\geq \big(\la L_{1}\ra\la L_{2}\ra\big)^{\frac{1}{3}}\geq \la L_{1}-L_{2}\ra^{\frac{1}{3}}.\]
		Since $\a_{1}\neq \a_{2}$, then it is easy to see that when $|\b_2-\b_{1}|\leq\eps$ for a sufficiently small $\eps=\eps(\a_{1},\a_{2})$, we have
		\[\la L_{1}-L_{2}\ra = |(\a_{2}-\a_{1})\xi^{3}-(\b_{2}-\b_{1})\xi|+1\geq \frac{|\a_{2}-\a_{1}|}{2}\,|\xi|^{3}.\]
		Thus (\ref{ineq for lin}) also holds when $|\xi |>1$.
	\end{proof}
	
	\begin{proposition}\label{Prop, false of est for lin term}
		If $\a_{1}=\a_{2}\neq 0$, then for any $s,b,\b_{1},\b_{2}\in\m{R}$, there does not exist a constant $C=C(\a_1,\a_2,s,b,\b_1,\b_2)$ such that (\ref{est for lin term}) holds. 
	\end{proposition}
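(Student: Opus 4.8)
The plan is to show that the estimate (\ref{est for lin term}) fails by exhibiting a family of test functions concentrated on the common characteristic surface $\tau = \phi^{\a_1,\b_1}(\xi)$, which also nearly coincides with the surface $\tau = \phi^{\a_2,\b_2}(\xi)$ on a large set precisely because $\a_1 = \a_2$. The key obstruction to (\ref{est for lin term}) when $\a_1 = \a_2 =: \a$ is that the difference of the two phase functions reduces to $\phi^{\a,\b_1}(\xi) - \phi^{\a,\b_2}(\xi) = (\b_2 - \b_1)\xi$, which is only linear in $\xi$; thus the two weights $\la L_1\ra$ and $\la L_2\ra$ can be made simultaneously small for $|\xi|$ arbitrarily large, and no amount of decay in $b$ can compensate the factor $|\xi|$ on the left.

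First I would fix large parameters $N \gg 1$ and set up the Fourier data. Choose $\wh{w}(\xi,\tau)$ to be (a smoothed version of) the indicator of the slab
\[
R_N := \big\{(\xi,\tau) : N \le \xi \le N+1,\ |\tau - \phi^{\a,\b_1}(\xi)| \le 1\big\},
\]
normalized so that $\|w\|_{X^{\a,\b_1}_{s,b}} \sim \la N\ra^{s}$, which holds since on $R_N$ one has $\la\xi\ra \sim N$ and $\la L_1\ra \sim 1$, and $|R_N| \sim 1$. Next I would estimate the left side of (\ref{est for lin term}) from below. On $R_N$, the symbol of $\p_x w$ contributes a factor $|\xi| \sim N$, while the weight $\la L_2\ra = \la \tau - \phi^{\a,\b_2}(\xi)\ra$ satisfies, for $(\xi,\tau)\in R_N$,
\[
|\tau - \phi^{\a,\b_2}(\xi)| \le |\tau - \phi^{\a,\b_1}(\xi)| + |\phi^{\a,\b_1}(\xi) - \phi^{\a,\b_2}(\xi)| \le 1 + |\b_2 - \b_1|\,|\xi| \lesssim 1 + |\b_2-\b_1| N,
\]
so $\la L_2\ra^{b-1} \gtrsim (1 + |\b_2-\b_1|N)^{b-1}$. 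Hence
\[
\|\p_x w\|_{X^{\a,\b_2}_{s,b-1}} \gtrsim \la N\ra^{s}\, N\, \big(1 + |\b_2-\b_1| N\big)^{b-1}.
\]
Comparing with $\|w\|_{X^{\a,\b_1}_{s,b}} \sim \la N\ra^{s}$, the claimed inequality would force
\[
N\,(1 + |\b_2-\b_1| N)^{b-1} \lesssim 1 \qquad \text{for all } N \gg 1,
\]
which is impossible: if $\b_1 = \b_2$ the left side is $\sim N \to \infty$; if $\b_1 \neq \b_2$ and $b < 1$ the left side is $\sim |\b_2-\b_1|^{b-1} N^{b} \to \infty$, and if $b \ge 1$ it is even larger. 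This contradiction proves the proposition.

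The only technical point requiring a little care — and the part I would treat as the "main obstacle," though it is routine — is that $\wh{w}$ must be a genuine Schwartz function rather than a sharp characteristic function, so I would replace $\mathbbm{1}_{R_N}$ by a fixed bump $\chi$ rescaled and translated to live on $R_N$; this changes all the displayed estimates only by absolute constants independent of $N$ and of $\b_1,\b_2$, since $R_N$ has bounded dimensions in the $\tau$-direction and unit length in $\xi$. One should also note that the case distinction on $b$ (and, when $b \geq 1$, the even easier blow-up) shows the failure is genuinely for \emph{all} $s, b, \b_1, \b_2 \in \m{R}$, matching the statement; in particular, unlike Lemma \ref{Lemma, est for lin term}, no smallness of $|\b_2 - \b_1|$ rescues the estimate here, precisely because the obstruction comes from $|\xi| \to \infty$ where the cubic terms in the two phases have cancelled exactly.
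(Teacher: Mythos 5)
Your overall strategy is the right one and matches the paper's: both arguments build the counterexample from a test function whose space-time Fourier support is a unit-height slab over $[N,N+1]$ (or $[N-1,N]$) lying on one of the two characteristic surfaces, exploiting the fact that when $\a_1 = \a_2$ the cubic parts of the two phases cancel so that $\la L_1\ra$ and $\la L_2\ra$ can differ by at most $|\b_2-\b_1|\,|\xi|$, which is far too weak to absorb the factor $|\xi|$. (The paper sets this up via duality, working with the weighted $L^2$ trilinear form and taking $f_1=f_2=\mathbbm{1}_E$, but that is a cosmetic difference from your direct computation of the two norms.)

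There is, however, a genuine gap in the case analysis at the end. When $\b_1 \neq \b_2$, on your slab $R_N$ one actually has $\la L_2\ra \sim |\b_2-\b_1|\,N$ for large $N$ (both upper and lower bound, since the line $\tau=\phi^{\a,\b_2}(\xi)$ does not pass through $R_N$ once $N$ is large). The ratio you are estimating therefore behaves like $N\,(|\b_2-\b_1| N)^{b-1} = |\b_2-\b_1|^{b-1}\,N^{b}$, and your claim that this tends to infinity ``for $b<1$'' is false: it is bounded for $b=0$ and tends to $0$ for $b<0$. So your construction produces no contradiction when $\b_1\ne\b_2$ and $b\le 0$, and the proposition does quantify over all $b\in\m{R}$. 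The paper closes exactly this gap with a case split: for small $b$ it replaces the slab $\{|\tau-\phi^{\a,\b_1}(\xi)|\le 1\}$ by $\{|\tau-\phi^{\a,\b_2}(\xi)|\le 1\}$. You can patch your proof the same way: concentrating $\wh{w}$ so that $\la L_2\ra\sim 1$ makes $\la L_2\ra^{b-1}\sim 1$, while now $\la L_1\ra\sim |\b_2-\b_1| N$ appears in the \emph{denominator} of the ratio raised to the power $b$, giving a ratio $\sim N^{1-b}/|\b_2-\b_1|^{b}\to\infty$ for all $b<1$ (and in particular for all $b\le 0$); together with your original slab, which handles $b>0$, this covers every real $b$.
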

	\begin{proof}
		Let  $\a_{1}=\a_{2}:=\a$. If there exist $s,b,\b_{1},\b_{2}\in\m{R}$ such that (\ref{est for lin term}) holds for some  constant $C$, then (\ref{weighted l2 for lin}) needs to be true  for any $f_j \in L^2 (\m{R}\times \m{R})$, $j=1,2$. We will only prove the statement in the case when $ b\geq \frac12 $ since the situation when $ b<\frac12 $ is similar.  When $b\geq \frac{1}{2}$,  for any $N\geq 2$, define $f_{1}(\xi,\tau)=f_{2}(\xi,\tau)=\mb{1}_{E}(\xi,\tau)$ with 
		\[E=\{(\xi,\tau)\in\m{R}^{2}: N-1\leq \xi\leq N,\,|\tau-\a\xi^{3}+\b_{1}\xi|\leq 1\},\]
		then for any $(\xi,\tau)\in E$, $|L_{1}|\leq 1$ and 
		$|L_{2}|=|L_{1}+(\b_{2}-\b_{1})\xi|\ls N$. In addition, the area of $E$ is 2 by direct calculation. As a result, the right hand side of (\ref{weighted l2 for lin}) equals $2C$ while its left hand side has the following lower bound:
		\[\bigg|\i_{\m{R}}\i_{\m{R}}\frac{\xi f_{1}(\xi,\tau)f_{2}(\xi,\tau)}{\la L_{1}\ra^{b}\la L_{2}\ra^{1-b}}\,d\xi\,d\tau\bigg|\gs \frac{N}{N^{1-b}}= N^{b},\]
		which is impossible when $N\rightarrow\infty$.
		
	\end{proof}

	\section{Main results on bilinear estimates}
	\label{Sec, results on bilin est}
	\quad
	Our main well-posedness results in Theorem \ref{Thm, lwp for ckdv} will be proved using  the same  approach  as that developed by Bourgain \cite{Bou93a, Bou93b}, Kenig-Ponce -Vega \cite{KPV96} in establishing  analytical well-posedness of the Cauchy problem of (\ref{KdV eq}) in the space $H^s (\m{R})$ for $s>-\frac34$. The key ingredient  in the approach is the bilinear estimate under the Fourier restriction space (also called Bourgain space). Let us first introduce the definition of this space. For any $\a,\,\b\in\m{R}$ with $\a\neq 0$, denote the polymomial $\phi^{\a,\b}$ as
	\be\label{phase fn}
	\phi^{\alpha,\beta}(\xi)=\alpha \xi^{3}-\beta \xi.\ee
	For convenience, $\phi^{\a,0}$ will be denoted as $\phi^{\a}$. Then the Fourier restriction space is defined as follows.
	\begin{definition}
		For any $\a,\,\beta,\,s,\,b\in\m{R}$ with $\a\neq 0$, the Fourier restriction space $X^{\a,\b}_{s,b}$ is defined to be the completion of the Schwartz space $\mathscr{S}(\m{R}^{2})$ with respect to the norm
		\be\label{Fourier R-norm}
		\|w\|_{X^{\a,\b}_{s,b}}=\|\la \xi\ra ^{s}\la\tau-\phi^{\a,\b}(\xi)\ra ^{b}\wh{w}(\xi,\tau)\|_{L^{2}(d\xi d\tau)},\ee
		where $\la\cdot\ra=1+|\cdot|$, $\phi^{\a,\b}$ is given by (\ref{phase fn}), and $\wh{w}$ refers to the space-time Fourier transform of $w$. Moreover, $X^{\a,0}_{s,b}$ is simply denoted as $X^{\a}_{s,b}$. On the other hand, for any $T>0$,  $X^{\a,\b}_{s,b} ([0,T])$ denotes the
		restriction of $X^{\a,\b}_{s,b}$ on the domain $\m{R} \times [0,T]$   which is a Banach space when equipped with the usual quotient norm.
	\end{definition}
	The bilinear estimate which was first considered by Bourgain\cite{Bou93b} is the following one:
	\begin{equation} \label{classical bilin}
		\| \partial _x (w_1w_2) \| _{X^{1}_{s,b-1} } \leq C \|w_1\|_{X^{1}_{s,b} }  \|w_2\|_{X^{1}_{s,b} }, \quad\forall w_1,w_2.
	\end{equation}
	Bourgain proved (\ref{classical bilin}) for $s=0$ and $b=\frac12$ while the following lemma is due to Kening, Ponce and Vega.
	
	\begin{lemma} [Kenig-Ponce-Vega \cite{KPV96}] \label{kpv-lemma}
		The bilinear estimate (\ref{classical bilin}) holds for any $s> -\frac34$ and $b\in \big(\frac12, b_{0}(s)\big)$ with some $b_0(s)>\frac12$, but fails for any $b\in \m{R}$ if $s<-\frac34$.
	\end{lemma}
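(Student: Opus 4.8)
The plan is to reproduce the classical Kenig-Ponce-Vega argument: for the positive direction, pass to the Fourier side, dualize, and reduce everything to an elementary integral count powered by the KdV resonance identity; for the failure, test the inequality against an explicit high-high-to-low interaction.

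For the positive direction, write $\sigma=\tau-\xi^{3}$, $\sigma_{i}=\tau_{i}-\xi_{i}^{3}$ and put $F_{i}(\xi,\tau)=\la\xi\ra^{s}\la\sigma_{i}\ra^{b}\,|\wh{w_{i}}(\xi,\tau)|$, so that $\|F_{i}\|_{L^{2}}=\|w_{i}\|_{X^{1}_{s,b}}$. By Plancherel and $L^{2}$-duality against a test function $g$ with $\|g\|_{L^{2}}=1$, the estimate \eqref{classical bilin} becomes the trilinear bound
\be\label{pp-trilin}
\iint\!\!\iint \frac{|\xi|\,\la\xi\ra^{s}\,F_{1}(\xi_{1},\tau_{1})\,F_{2}(\xi_{2},\tau_{2})\,g(\xi,\tau)}{\la\xi_{1}\ra^{s}\la\xi_{2}\ra^{s}\,\la\sigma\ra^{1-b}\la\sigma_{1}\ra^{b}\la\sigma_{2}\ra^{b}}\,d\xi_{1}\,d\tau_{1}\,d\xi\,d\tau\;\lesssim\;\|F_{1}\|_{L^{2}}\|F_{2}\|_{L^{2}},
\ee
with $\xi_{2}=\xi-\xi_{1}$, $\tau_{2}=\tau-\tau_{1}$. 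The single structural input is the identity $\sigma-\sigma_{1}-\sigma_{2}=-3\,\xi\xi_{1}\xi_{2}$, valid on the hyperplane of integration, which forces $\max\{\la\sigma\ra,\la\sigma_{1}\ra,\la\sigma_{2}\ra\}\gtrsim\la\xi\xi_{1}\xi_{2}\ra$. I would then split the domain of \eqref{pp-trilin} into the regions where $\la\sigma\ra$, $\la\sigma_{1}\ra$, or $\la\sigma_{2}\ra$ is largest, use this bound on the dominant modulation to extract a gain of $\la\xi\xi_{1}\xi_{2}\ra$, and apply Cauchy-Schwarz twice (the three factors $g$, $F_{1}$, $F_{2}$ being grouped according to the region) to reduce \eqref{pp-trilin} to a family of uniform estimates of the type
\be\label{pp-count}
\sup_{\xi,\tau}\ \int \frac{|\xi|^{2}\,\la\xi\ra^{2s}}{\la\xi_{1}\ra^{2s}\la\xi_{2}\ra^{2s}\;\la\sigma\ra^{2(1-b)}\la\sigma_{1}\ra^{2b}\la\sigma_{2}\ra^{2b}}\,d\xi_{1}\,d\tau_{1}\;<\;\infty
\ee
and its two permuted analogues. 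In each of these the $\tau_{1}$-integration is the convolution of two powers of $\la\cdot\ra$ and, because $b>\tfrac12$, contributes a factor $\lesssim\la\sigma+3\xi\xi_{1}\xi_{2}\ra^{-\theta}$ for some $\theta>0$; the remaining $\xi_{1}$-integral is treated by the substitution $\mu=\xi\xi_{1}(\xi-\xi_{1})$, whose Jacobian $|\xi|\,|\xi-2\xi_{1}|$ vanishes only at $\xi_{1}=\tfrac{\xi}{2}$, where it contributes an integrable inverse-square-root singularity.

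The hard part will be this counting step: verifying that \eqref{pp-count} and its analogues are finite exactly down to $s=-\tfrac34$. The delicate region is the high-high interaction $|\xi_{1}|\sim|\xi_{2}|\gg|\xi|$, where for $s<0$ the factor $\la\xi_{1}\ra^{-2s}\la\xi_{2}\ra^{-2s}$ grows with the internal frequency; one must check that this growth is precisely compensated by the modulation gain $\la\sigma+3\xi\xi_{1}\xi_{2}\ra^{-\theta}$ (there $|\xi\xi_{1}\xi_{2}|\sim|\xi|\,|\xi_{1}|^{2}$) together with the $|\xi_{1}|^{-1}$ coming from the Jacobian, and the break-even exponent works out to $s=-\tfrac34$. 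The upper restriction $b<b_{0}(s)$ enters only to keep the exponent $2(1-b)$ on the dominant modulation large enough for this count to close when $s$ is near $-\tfrac34$. Everything preceding this computation — the duality reduction, the modulation case split, the two Cauchy-Schwarz applications, and the $\tau_{1}$-integration — is routine.

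For the failure when $s<-\tfrac34$, I would exhibit an explicit counterexample. Fixing $b$, take $\wh{w_{1}}$ and $\wh{w_{2}}$ to be the characteristic functions of the unit curved boxes $\{\,|\xi\mp N|\le1,\ |\tau-\xi^{3}|\le1\,\}$ centred at the opposite frequencies $+N$ and $-N$. This is a high-high-to-low interaction: the output frequency $\xi=\xi_{1}+\xi_{2}$ is $O(1)$, so $\la\xi\ra\sim1$, while the output modulation is $\sigma=-3\xi\xi_{1}\xi_{2}+O(1)\sim\xi N^{2}$, hence $\la\sigma\ra\sim N^{2}$ on the bulk of the output. A direct computation gives $\|w_{i}\|_{X^{1}_{s,b}}\sim N^{s}$, while $\wh{\partial_{x}(w_{1}w_{2})}$ has size $\sim N^{-1}$ on a set of measure $\sim N$, so $\|\partial_{x}(w_{1}w_{2})\|_{X^{1}_{s,b-1}}\sim N^{2b-5/2}$. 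Therefore the ratio of the two sides of \eqref{classical bilin} is $\gtrsim N^{\,2b-5/2-2s}$, which tends to $\infty$ as $N\to\infty$ whenever $s<b-\tfrac54$; since $b>\tfrac12$ this range contains every $s<-\tfrac34$. The construction uses no property of $b$ beyond $b>\tfrac12$ (and the estimate is already false for $b\le\tfrac12$ by even cruder test functions), so \eqref{classical bilin} fails for every $b\in\m{R}$ as soon as $s<-\tfrac34$.
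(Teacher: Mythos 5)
Lemma~\ref{kpv-lemma} is cited from \cite{KPV96}; the paper itself does not prove it, so there is no in-paper proof to compare against. Your sketch of the positive direction is consistent with the Kenig--Ponce--Vega argument: dualize, invoke the resonance identity $\sigma-\sigma_1-\sigma_2=-3\xi\xi_1\xi_2$, case-split on the dominant modulation, apply Cauchy--Schwarz together with the $\tau_1$-convolution bound, and count. The counting step you defer is indeed where the threshold $-\frac{3}{4}$ emerges, but the overall structure is right.

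The genuine gap is in the failure direction. Your opposite-frequency unit-box example is a high-high-to-low interaction in which the large modulation ($\sim N^2$) falls on the \emph{output} factor carrying the exponent $1-b$. The ratio $N^{2b-5/2-2s}$ that you compute gives the necessary condition $b\le s+\frac{5}{4}$, i.e.\ failure only for $b>s+\frac{5}{4}$. When $s<-\frac{3}{4}$ this range does contain every $b\ge\frac{1}{2}$, since $s+\frac{5}{4}<\frac{1}{2}$; but it leaves the half-line $b\le s+\frac{5}{4}$ completely untreated, and your parenthetical that "the estimate is already false for $b\le\frac{1}{2}$ by even cruder test functions" is an unsupported assertion rather than a proof. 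To cover those $b$ you need a \emph{second}, complementary construction in which the large modulation instead lands on an \emph{input} factor carrying the exponent $b$: take $\xi_1\sim\xi_3\sim N$, $\xi_2\sim-2N$, boxes of width $N^{-1/2}$ with $|L_1|,\,|L_3|\le 1$ and $|L_2|\sim N^3$, exactly as in Claim~B of Section~\ref{Subsec, proof of sharp of bilin est, neg}. That example yields the lower-bound necessary condition $b\ge\frac{1}{4}-\frac{s}{3}$. The two conditions $b\le s+\frac{5}{4}$ and $b\ge\frac{1}{4}-\frac{s}{3}$ are jointly unsatisfiable precisely when $s<-\frac{3}{4}$, and it is their \emph{conjunction} that rules out every $b\in\m{R}$; a single counterexample only ever eliminates a half-line of $b$-values. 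Without the second construction the failure argument is incomplete.
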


	In order to deal with the general KdV-KdV systems (\ref{ckdv, coef form}), four types of bilinear estimates need to be investigated. In (\ref{d1})-(\ref{nd2}), $(D)$ represents divergence form and (ND) refers to non-divergence form.
	\begin{eqnarray}
		&&\text{(D1):}\qquad \|\p_{x}(w_{1}w_{2})\|_{X^{\a_{2},\b_{2}}_{s,b-1}}\leq C \|w_{1}\|_{X^{\a_{1},\b_{1}}_{s,b}}\|w_{2}\|_{X^{\a_{1},\b_{1}}_{s,b}}, \quad\forall\,w_1,w_2.\label{d1} \\
		&&\text{(D2):}\qquad \|\p_{x}(w_{1}w_{2})\|_{X^{\a_{1},\b_{1}}_{s,b-1}}\leq C \|w_{1}\|_{X^{\a_{1},\b_{1}}_{s,b}}\|w_{2}\|_{X^{\a_{2},\b_{2}}_{s,b}}, \quad\forall\,w_1,w_2.\label{d2}
	\end{eqnarray}
	and 
	\begin{eqnarray}
		&&\text{(ND1):}\qquad
		\|(\p_{x}w_{1})w_{2}\|_{X^{\a_{1},\b_{1}}_{s,b-1}}\leq C \| w_{1}\|_{X^{\a_{1},\b_{1}}_{s,b}}\|w_{2}\|_{X^{\a_{2},\b_{2}}_{s,b}}, \quad\forall\,w_1,w_2.\label{nd1}\\
		&&\text{(ND2):}\qquad
		\|w_{1}(\p_{x}w_{2})\|_{X^{\a_{1},\b_{1}}_{s,b-1}}\leq C \| w_{1}\|_{X^{\a_{1},\b_{1}}_{s,b}}\|w_{2}\|_{X^{\a_{2},\b_{2}}_{s,b}}, \quad\forall\,w_1,w_2.\label{nd2}
	\end{eqnarray}
	Here, $(\a_{1},\b_{1})$ \big(or $(\a_{2},\b_{2})\big)$ stands for $(a_{1},b_{11})$ or $(a_{2},b_{22})$. (D1) is used to deal with the square terms $uu_{x}$ and $vv_{x}$ in (\ref{ckdv, coef form}). (D2) is responsible for the mixed divergence term $(uv)_{x}$ when $d_{11}=d_{12}$ or $d_{21}=d_{22}$ in (\ref{ckdv, coef form}). (ND1) and (ND2) are applied to treat the mixed non-divergence terms $u_{x}v$ and $uv_{x}$ when $d_{11}\neq d_{12}$ or $d_{21}\neq d_{22}$. On the other hand, (D1) is different from (D2) since $w_{1}$ and $w_{2}$ live in the same space $X^{\a_{1},\b_{1}}_{s,b}$ for (D1) but in different spaces for (D2). (ND1) is also slightly different from (ND2). Nevertheless, due to the relation $(w_{1}w_{2})_{x}=(\p_{x}w_{1})w_{2}+w_{1}(\p_{x}w_{2})$,  any results for (ND2) can be automatically obtained once the corresponding results are known for (D2) and (ND1). The main challenges of studying the bilinear estimates (\ref{d1})-- (\ref{nd2}) come from either the distinct dispersion coefficients $\alpha_1$ and $\alpha_2$ or the non-divergence form.
	
	\begin{theorem}\label{Thm, d2, neg}
		Let  $\a_{1} \a_{2} <0$. Assume $s$ and $b$ satisfy one of the following conditions.
		\begin{enumerate}[(1)]
			\item $-\frac{13}{12}\leq s\leq -1$ and $\frac{1}{4}-\frac{s}{3}\leq b\leq \frac{4}{3}+\frac{2s}{3}$;
			\item $-1< s<-\frac{3}{4}$ and $\frac{1}{4}-\frac{s}{3}\leq b\leq 1+\frac{s}{3}$;
			
			\item $s\geq -\frac{3}{4}$ and $\frac{1}{2}<b< \frac{3}{4}$.
		\end{enumerate}
		Then there exist $\eps=\eps(\a_{1},\a_{2})$ and $C=C(\a_{1},\a_{2},s,b)$ such that for any $|\b_2-\b_{1}|\leq\eps$, (\ref{d2}) holds.
	\end{theorem}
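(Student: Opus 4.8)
The plan is to reduce the bilinear estimate \eqref{d2} to a weighted $L^2$ convolution estimate on the Fourier side and then control the resulting multiplier by a careful case analysis driven by the resonance function. Writing $f_j(\xi,\tau)=\la\xi\ra^s\la\tau-\phi^{\a_j,\b_j}(\xi)\ra^b\wh{w_j}(\xi,\tau)$, the estimate \eqref{d2} is equivalent (by duality and Plancherel) to a bound of the form
\be
\left| \iint K(\xi,\tau,\xi_1,\tau_1)\, g(\xi,\tau) f_1(\xi_1,\tau_1) f_2(\xi-\xi_1,\tau-\tau_1)\, d\xi\, d\tau\, d\xi_1\, d\tau_1 \right| \le C \|g\|_{L^2}\|f_1\|_{L^2}\|f_2\|_{L^2},
\ee
where, with $\sigma=\tau-\phi^{\a_1,\b_1}(\xi)$, $\sigma_1=\tau_1-\phi^{\a_1,\b_1}(\xi_1)$, $\sigma_2=(\tau-\tau_1)-\phi^{\a_2,\b_2}(\xi-\xi_1)$, the kernel is
\be
K = \frac{|\xi|\,\la\xi\ra^s}{\la\xi_1\ra^s\la\xi-\xi_1\ra^s\,\la\sigma\ra^{1-b}\la\sigma_1\ra^{b}\la\sigma_2\ra^{b}}.
\ee
The central algebraic fact is the resonance identity: on the support one has $\sigma-\sigma_1-\sigma_2 = \phi^{\a_1,\b_1}(\xi)-\phi^{\a_1,\b_1}(\xi_1)-\phi^{\a_2,\b_2}(\xi-\xi_1)$, which when $|\b_2-\b_1|\le\eps$ is $\a_1\xi^3-\a_1\xi_1^3-\a_2(\xi-\xi_1)^3$ plus an $O(\eps|\xi|)$ error. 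Since $\a_1\a_2<0$, the cubic form $H(\xi,\xi_1):=\a_1\xi^3-\a_1\xi_1^3-\a_2(\xi-\xi_1)^3$ does \emph{not} degenerate the way the single-KdV resonance $\xi^3-\xi_1^3-(\xi-\xi_1)^3=3\xi\xi_1(\xi-\xi_1)$ does; instead it is comparable to $|\xi|^3+|\xi_1|^3+|\xi-\xi_1|^3$ in the high-high and high-low interactions, which is precisely the source of the improved index $-\tfrac{13}{12}$.

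The main steps, in order: (i) perform a Littlewood--Paley decomposition in the spatial frequencies $\xi,\xi_1,\xi-\xi_1$ into dyadic blocks $N,N_1,N_2$, and split into the regions $N_{\max}\sim N_{\text{med}}\gg N_{\min}$ (high-high/high-low) and $N\lesssim 1$ (low output); (ii) in each region use $\max(\la\sigma\ra,\la\sigma_1\ra,\la\sigma_2\ra)\gtrsim \la H\ra \gtrsim N_{\max}^{3}$ (or $N_{\max}^2 N_{\min}$ depending on the configuration) to trade modulation weight for frequency gain, taking $b$ slightly above $\tfrac12$ so that two of the three modulation weights are summable; (iii) apply the standard $L^2$--$L^2$--$L^2$ bilinear bound for the remaining two factors — i.e. a Strichartz/Fourier-restriction estimate for the Airy-type propagators with coefficients $\a_1,\a_2$ — which after integrating out the unconstrained variables produces a supremum-over-a-curve factor that is bounded by a power of $N_{\min}$. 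Combining these, the exponent bookkeeping yields a net power $N_{\max}^{\,2s+\frac{13}{6}+\cdots}$ that is summable exactly on the stated $(s,b)$ range, with the endpoints $s=-\tfrac{13}{12}$, $b=\tfrac14-\tfrac s3$ and $b=\tfrac43+\tfrac{2s}{3}$ arising as the boundary of the region where all three summations converge simultaneously. For the range $s\ge-\tfrac34$ one can instead borrow the single-KdV argument of Lemma \ref{kpv-lemma} essentially verbatim, since there the worst interaction is already controlled at $-\tfrac34$; the constraint $\tfrac12<b<\tfrac34$ is just the KPV window, and here the mismatch $\a_1\a_2<0$ only helps.

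The perturbative parameter $\eps$ is handled by absorbing the $O(\eps|\xi|)$ error in $H$ into the leading cubic: since $|H|\gtrsim N_{\max}^{3}$ while the error is $O(\eps N_{\max})$, choosing $\eps=\eps(\a_1,\a_2)$ small keeps $\la\sigma\ra+\la\sigma_1\ra+\la\sigma_2\ra\gtrsim N_{\max}^{3}$ intact; one also notes the lower-order term $-\b_j\xi$ in $\phi^{\a_j,\b_j}$ shifts the dispersion curves by a bounded amount and does not affect the modulation analysis. I expect the main obstacle to be the region where $N_{\min}$ is not small but all three frequencies are comparable \emph{and} the resonance $H$ nearly vanishes along a curve — one must check that the zero set of $H$ (a union of lines through the origin determined by $r=\a_2/\a_1$) is transverse enough that the $L^2$ restriction estimate along it still gains the needed power of the minimum frequency; this is where the sign condition $\a_1\a_2<0$ (as opposed to $0<r<\tfrac14$ etc.) is used in an essential way, and where the extra $-\tfrac{1}{3}$ beyond $-\tfrac34$ in the index is won or lost. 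A secondary technical point is verifying that the upper endpoint $b\le\tfrac43+\tfrac{2s}{3}$ (rather than a larger $b$) is forced: this comes from needing $\la\sigma\ra^{1-b}$ in the denominator to still carry enough weight to close the $g$-integration, so $1-b$ cannot be too small, which caps $b$ from above in tandem with the requirement $b\ge\tfrac14-\tfrac s3$ from below.
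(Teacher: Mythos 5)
Your outline shares the same starting point as the paper (reduce to a weighted $L^2$ convolution via duality and Plancherel, exploit the resonance identity), but there are several genuine gaps, and the central one concerns the choice of $b$.

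\textbf{The main gap: $b$ cannot be taken ``slightly above $\tfrac12$''.} You propose to take $b$ slightly above $\tfrac12$ so that two of the three modulation weights are summable. But for the cases $s<-\tfrac34$ the theorem imposes $b\geq \tfrac14-\tfrac s3$, which is \emph{strictly greater} than $\tfrac12$ whenever $s<-\tfrac34$, and at the endpoint $s=-\tfrac{13}{12}$ the interval $\bigl[\tfrac14-\tfrac s3,\,\tfrac43+\tfrac{2s}{3}\bigr]$ collapses to the single point $b=\tfrac{11}{18}$. The paper's proof crucially uses $b\geq\tfrac14+\tfrac{\rho}{3}$ (with $\rho=-s$) in the region where $|\xi_1|\sim|\xi_2|\sim|\xi_3|$, and $b\leq\tfrac43-\tfrac{2\rho}{3}$ in the region where $\la L_3\ra$ dominates; the two constraints are incompatible with $b\approx\tfrac12$. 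Theorem~\ref{Thm, sharp d2, neg} shows this $b$-window is sharp. So a strategy that fixes $b$ close to $\tfrac12$ simply cannot reach $s<-\tfrac34$; it is not a presentational detail but the precise point that is new in the theorem.

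\textbf{Second gap: the resonance is not uniformly of size $N_{\max}^3$.} You assert that for $\a_1\a_2<0$ the cubic form $H$ is comparable to $|\xi|^3+|\xi_1|^3+|\xi-\xi_1|^3$. This is false. Writing (as in (\ref{simp exp of H_2})) $H_2=-3\a_1\xi_2^3\,h_r\bigl(\tfrac{\xi_1}{\xi_2}\bigr)$ with $h_r(x)=x^2+x+\tfrac{1-r}{3}$, the fact that $r<0$ only gives $h_r(x)\gtrsim 1+x^2$, hence $|H_2|\gtrsim|\xi_2|(\xi_1^2+\xi_2^2)$. This still vanishes on $\{\xi_2=0\}$ (the frequency of the $X^{\a_2,\b_2}$ input), and in the high-low region $|\xi_2|\ll|\xi_1|\sim|\xi_3|$ one only has $|H_2|\sim|\xi_2||\xi_1|^2\ll|\xi_1|^3$. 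What rescues the argument there is not a large resonance but the smallness of the derivative-loss factor $K_1$ near $\{\xi_2=0\}$. You also describe the zero set as ``a union of lines through the origin''; for $r<0$ the quadratic $h_r$ has no real roots, so the resonance set is only the single hyperplane $\{\xi_2=0\}$ — this is precisely why there is no resonant obstruction to lowering $s$, as the paper explains in Section~\ref{Subsec, idea of proof}.

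\textbf{Third gap: the $s\geq-\tfrac34$ case is not literally the KPV lemma.} Lemma~\ref{kpv-lemma} is for the single dispersion relation $\a_1=\a_2$; in (\ref{d2}) the two inputs live in Bourgain spaces built on different phases $\phi^{\a_1,\b_1}$, $\phi^{\a_2,\b_2}$. The paper handles this case by reducing to $s=-\tfrac34$ and running the same decomposition as in Case~(2), not by citing \cite{KPV96} verbatim.

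\textbf{On the method.} The paper does not use Littlewood--Paley blocks or Strichartz/restriction estimates; it decomposes the domain according to which modulation $L_i$ is maximal (plus a low-frequency region $A_0$) and then uses Cauchy--Schwarz, Lemma~\ref{Lemma, int in tau}, and elementary calculus lemmas on the cubic and quadratic polynomials $P_{\xi,\tau}$ and $Q_{\xi,\tau}$ to close the estimates pointwise. A dyadic $+\,$Strichartz route could in principle be attempted, but as sketched (with $b\approx\tfrac12$) it would at best recover the $s\geq-\tfrac34$ regime and lose the improvement down to $-\tfrac{13}{12}$, and the dyadic summation would likely miss the closed endpoints $s=-\tfrac{13}{12}$, $b=\tfrac{11}{18}$, which the theorem asserts.
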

	
	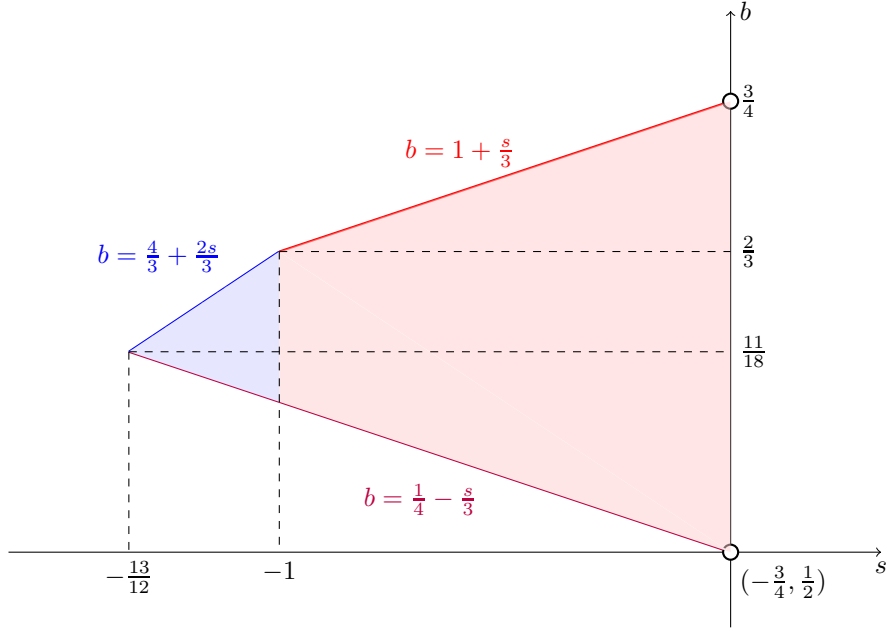
\begin{figure}[!ht]
		\centering
		\begin{tikzpicture}[scale=1.2]
			\draw[->] (-4.8,0) -- (1.4,0) node[anchor=north] {$s$};
			\draw[->] (0,-0.5) -- (0,3.6) node[anchor=west] {$b$};
			\draw (0,-0.2) node[right] {$(-\frac34,\frac12)$};
			\draw[thick, fill=white] (0,0) circle [radius=0.05];
			\draw[thick, fill=white] (0,3) circle [radius=0.05];
			\draw[thick, purple, domain=(-4:-0.05), samples=100] plot ({\x},{-(\x)/3});
			\draw (-2.9,0.35) node[right] {$\color{purple} b=\frac{1}{4}-\frac{s}{3}$};
			\draw[thick, blue, domain=(-4:-3), samples=100] plot ({\x},{2*(\x)/3+4});
			\draw (-4.1,1.65) node[above] {$\color{blue} b=\frac{4}{3}+\frac{2s}{3}$};
			\draw[thick, red, domain=(-3:-0.05), samples=100] plot ({\x},{(\x)/3+3});
			\draw (-2,2.5) node[above] {$\color{red} b=1+\frac{s}{3}$};
			\fill[red!20, opacity=0.5]  plot[domain=(-3:0), samples=100] ({\x},{(\x)/3+3})--(0,0);
			\fill[red!10]  plot[domain=(1:2), samples=100] ({-3}, {\x})--(0,0);
			\fill[blue!10]  plot[domain=(-4:-3), samples=100] ({\x},{2*(\x)/3+4})--(-3,1);
			\draw (-4,0) node[below] {$-\frac{13}{12}$};
			\draw [dashed] (-4,4/3)--(-4,0);
			\draw (-3,0) node[below] {$-1$};
			\draw [dashed] (-3,2)--(-3,0);
			\draw (0,4/3) node[right] {$\frac{11}{18}$};
			\draw [dashed] (-4,4/3)--(0,4/3);
			\draw (0,2) node[right] {$\frac{2}{3}$};
			\draw [dashed] (-3,2)--(0,2);
			\draw (0,3) node[right] {$\frac{3}{4}$};
		\end{tikzpicture}
		\caption{Range of $s$ and $b$ when $s<-\frac{3}{4}$}
		\label{Fig, range of s and b}
	\end{figure}
	
	For the convenience of the readers, we draw a picture of the range of $s$ and $b$ when $s< -\frac34$, see Figure \ref{Fig, range of s and b}. This range is sharp due to Theorem \ref{Thm, sharp d2, neg}.
	
	\begin{theorem}\label{Thm, sharp d2, neg}
		Let $\a_1\a_2 < 0$ and $\b_1=\b_2=\b$. Assume $s$ and $b$ satisfy one of the following conditions.
		\begin{enumerate}[(1)]
			\item $s<-\frac{13}{12}$ and $b\in\m{R}$;
			\item $-\frac{13}{12}\leq s\leq -1$ and $b\notin [\frac{1}{4}-\frac{s}{3},\, \frac{4}{3}+\frac{2s}{3}]$;
			\item $-1< s<-\frac{3}{4}$ and $b\notin [\frac{1}{4}-\frac{s}{3},\, 1+\frac{s}{3}]$.
		\end{enumerate}
		Then there does not exist any constant $C=C(\a_1,\a_2,\b,s,b)$ such that (\ref{d2}) holds.
	\end{theorem}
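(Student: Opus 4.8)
The plan is to establish the failure of the bilinear estimate (\ref{d2}) by constructing explicit counterexamples: families of functions $w_1, w_2$ (via their space-time Fourier transforms, taken as characteristic functions of well-chosen rectangles in frequency space) for which the left-hand side grows faster than the right-hand side as a parameter $N \to \infty$. This is the standard Knapp-type / high-low interaction technique used in the failure direction of Lemma \ref{kpv-lemma} and its relatives. The key algebraic object is the resonance function: for the estimate (\ref{d2}), with $\wh{w}_i$ supported near $\tau_i = \phi^{\a_i,\b_i}(\xi_i)$, the relevant quantity is the resonance identity
\be\label{resonance-identity}
\big(\tau - \phi^{\a_1,\b_1}(\xi)\big) - \big(\tau_1 - \phi^{\a_1,\b_1}(\xi_1)\big) - \big(\tau_2 - \phi^{\a_2,\b_2}(\xi_2)\big) = \phi^{\a_1,\b_1}(\xi_1+\xi_2) - \phi^{\a_1,\b_1}(\xi_1) - \phi^{\a_2,\b_2}(\xi_2),
\ee
where $\xi = \xi_1+\xi_2$. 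When $\b_1=\b_2=\b$ the right-hand side of (\ref{resonance-identity}) simplifies; with $r = \a_2/\a_1 < 0$ one computes that the cubic part is $\a_1\big[(\xi_1+\xi_2)^3 - \xi_1^3 - r\xi_2^3\big] = \a_1\big[3\xi_1^2\xi_2 + 3\xi_1\xi_2^2 + (1-r)\xi_2^3\big]$, and the linear-in-$\b$ terms cancel entirely since $-\b(\xi_1+\xi_2) + \b\xi_1 + \b\xi_2 = 0$. So the modulation budget is governed purely by this cubic form, and the degeneracies of (\ref{d2}) come from choosing $(\xi_1,\xi_2)$ making this cubic small while $|\xi_1|+|\xi_2|$ and the output frequency are controlled.

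First I would identify the critical configuration that forces $s \ge -\tfrac{13}{12}$. Since $r<0$, the quadratic-in-$\xi_1$ factor $3\xi_1^2 + 3\xi_1\xi_2 + (1-r)\xi_2^3/\xi_2 = 3\xi_1^2 + 3\xi_1\xi_2 + (1-r)\xi_2^2$ has discriminant $9\xi_2^2 - 12(1-r)\xi_2^2 = (12r - 3)\xi_2^2 < 0$ for $r < \tfrac14$ — in particular always for $r < 0$ — so generically the resonance does not vanish and the worst case is a \emph{high-high-to-low} interaction: take $\xi_1 \approx -\xi_2 \approx N$ large with $|\xi| = |\xi_1+\xi_2| \approx L$ small (so the output frequency is tiny, costing nothing from $\la\xi\ra^s$ on the left, while $\la\xi_i\ra^s \approx N^s$ on the right is a \emph{gain} when $s<0$), and the resonance is of size $\approx \a_1 N^2 L + \a_1(1-r)N^3$ — wait, that last term does not vanish, so one must instead balance $L$ against $N$. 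The precise exponent $-\tfrac{13}{12}$ arises from optimizing: choosing $L \sim N^\theta$ for the right $\theta$, summing over $\sim N/L$ disjoint blocks of size $L$ (an orthogonality/almost-disjointness argument on the $\xi_1$-side), and matching powers of $N$ on both sides. I would set up the two families $\wh{w}_1 = \mathbbm{1}_{R_1}$, $\wh{w}_2 = \mathbbm{1}_{R_2}$ with $R_i$ thin rectangles of dimensions $\sim L \times L^3$ (respecting the parabola/cubic $\tau \sim \phi^{\a_i}(\xi)$), compute $\|w_i\|_{X^{\a_i}_{s,b}} \sim N^s L^{1/2} L^{3/2} L^b$ (roughly), and compute the $X^{\a_1}_{s,b-1}$ norm of $\p_x(w_1 w_2)$ from below by restricting the convolution $\wh{w}_1 * \wh{w}_2$ to the region where it is $\gtrsim$ its maximal size, which is where the resonance (\ref{resonance-identity}) is $\lesssim$ the modulation scale. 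Taking the ratio and sending $N\to\infty$ must diverge precisely when $s < -\tfrac{13}{12}$ (case (1)), when $s \in [-\tfrac{13}{12},-1]$ and $b$ is outside $[\tfrac14-\tfrac s3,\,\tfrac43+\tfrac{2s}{3}]$ (case (2)), or when $s\in(-1,-\tfrac34)$ and $b\notin[\tfrac14-\tfrac s3,\,1+\tfrac s3]$ (case (3)) — the two endpoints of each $b$-interval corresponding to two different extremal configurations (one where the output has \emph{high} modulation, forcing the lower bound on $b$; one where an input has high modulation, forcing the upper bound).

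Concretely I would organize the proof into the following steps. Step 1: reduce to the case $\b_1=\b_2=\b=0$ — this is legitimate since, as noted above, the $\b$-linear contributions to (\ref{resonance-identity}) cancel when $\b_1=\b_2$, so the resonance function is \emph{independent} of $\b$, and one checks the $\la\xi\ra^s\la\tau-\phi^{\a,\b}(\xi)\ra^b$ weights are comparable after the harmless shift $\tau \mapsto \tau + \b\xi$ in each variable. Step 2: for the range $s<-\tfrac{13}{12}$, exhibit the high-high-to-low counterexample with the optimized block decomposition described above and show the ratio blows up for \emph{every} $b\in\m{R}$ (here the exponent count in $N$ already fails regardless of $b$; the $b$-powers only affect the $L$-dependence which can be absorbed by further optimizing $L = N^\theta$). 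Step 3: for $-\tfrac{13}{12}\le s\le -1$, refine Step 2 to track the $b$-dependence and produce two counterexample families pinning down the two endpoints $b = \tfrac14 - \tfrac s3$ and $b = \tfrac43 + \tfrac{2s}{3}$; similarly Step 4 for $-1<s<-\tfrac34$ with endpoints $\tfrac14-\tfrac s3$ and $1+\tfrac s3$ (the difference in the upper endpoint reflects a change in which degenerate configuration dominates as $s$ crosses $-1$). The main obstacle I anticipate is Step 3/4: carrying out the lower bound on $\|\p_x(w_1 w_2)\|_{X^{\a_1}_{s,b-1}}$ honestly requires showing the convolution $\wh{w}_1 * \wh{w}_2$ is genuinely large (not just nonzero) on a set of controlled measure — i.e. a lower bound for the Lebesgue measure of $\{(\xi,\tau) : \exists\, \xi_1 \text{ with } (\xi_1,\tau_1)\in R_1,\ (\xi-\xi_1,\tau-\tau_1)\in R_2\}$ — and making sure the almost-orthogonality of the $\sim N/L$ blocks survives after convolution, so that $L^2$-masses add rather than cancel. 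This bookkeeping, together with the delicate two-parameter ($N$ and $L$) optimization that produces the exact fraction $\tfrac{13}{12}$ and the exact linear functions of $s$ bounding $b$, is where essentially all the work lies; the rest is the routine Fourier-support calculus standard in $X_{s,b}$ counterexamples.
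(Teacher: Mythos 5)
Your framework — characteristic functions of thin frequency boxes tested against the dualized trilinear form, the resonance identity, and the observation that $\beta$ cancels when $\beta_1=\beta_2$ — matches the paper's approach, and your computation of the cubic resonance (and its non-vanishing via the discriminant when $r<\tfrac14$) is correct. However, Step~2 as written contains a genuine gap. You propose to kill all $b\in\mathbb{R}$ for $s<-\tfrac{13}{12}$ using a \emph{single} high-high-to-low family (inputs at $\xi_1\approx N$, $\xi_2\approx -N$, output at $\xi_3\approx L$) together with an $L=N^\theta$ optimization and block decomposition. Carrying this out honestly — with boxes of width $L$ and modulation budget $\mu$, using $|H_2|\sim N^3$ so $|L_3|\sim N^3$, and comparing the trilinear form to $\prod|B_i|^{1/2}$ — one finds that the necessary constraint this family imposes on $b$ is roughly $b\in\bigl[-\tfrac{1+2s}{3},\,\tfrac{4+2s}{3}\bigr]$, an interval that becomes empty only for $s\lesssim -\tfrac54$, not at $-\tfrac{13}{12}$. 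The value $-\tfrac{13}{12}$ cannot be extracted from a single interaction type, no matter how the scales are optimized; it arises as the intersection point of \emph{two} constraints produced by two qualitatively different configurations. The paper's Claim~A uses the high-high-to-low box (widths $\sim 1$, resonance dumped into the \emph{output} modulation $L_3\sim N^3$, which sits under the weight $\langle L_3\rangle^{1-b}$) and obtains $b\le\tfrac{4+2s}{3}$. Its Claim~B uses a Knapp-type box of width $N^{-1/2}$ at $\xi_1,\xi_3\approx N$ with $\xi_2\approx -2N$, dumping the resonance into the \emph{input} modulation $L_2\sim N^3$ (weight $\langle L_2\rangle^b$), giving $b\ge\tfrac14-\tfrac{s}{3}$. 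These two constraints have empty intersection iff $s<-\tfrac{13}{12}$, and both are needed even in Case~(1). The block-orthogonality machinery you invoke never appears; single rectangles suffice.

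A secondary issue: you have the correspondence between extremal configuration and endpoint reversed. Because $L_3$ enters the dual form with weight $1-b$, a configuration with large output modulation is defeated only when $1-b$ is large enough — i.e.\ $b$ small — so it pins down the \emph{upper} endpoint of the admissible $b$-interval; a configuration with large input modulation ($L_2$, weight $b$) requires $b$ large enough and pins down the \emph{lower} endpoint, opposite to what you wrote. Finally, for Case~(3) the upper endpoint $1+\tfrac s3$ does not come from the same high-high-to-low family as Claim~A but from a third configuration with $\xi_1,\xi_2$ of the \emph{same} sign near $N$ and $\xi_3\approx -2N$; the crossover at $s=-1$ is exactly where $\tfrac{4+2s}{3}$ and $1+\tfrac s3$ swap order, and your proposal gestures at this only vaguely without exhibiting the second upper-bound example.
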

	
	The results presented in Theorem \ref{Thm, d2, neg} and Theorem \ref{Thm, sharp d2, neg} together are surprising  in comparison to the previous results on the bilinear estimate. 
	\begin{itemize}
		\item First, in the case of the single KdV equation (\ref{KdV eq}), the critical index for the corresponding bilinear estimate (\ref{classical bilin}) is $-\frac{3}{4}$. However, when $\a_{1}\a_{2}<0$, the critical index of the bilinear estiamte (\ref{d2}) of type (D2) can be as low as $-\frac{13}{12}$.
		
		\item Secondly, for the previous biliear estimates, $b$ is usually required to be close to $\frac{1}{2}$ as $s$ approaches to the critical threshold. However, for the bilinear estimate (\ref{d2}) with  $\a_{1}\a_{2}<0$ and $-\frac{13}{12}\leq s<-\frac{3}{4}$, $b$ needs to be away from $\frac{1}{2}$. In particular, when $s=-\frac{13}{12}$, $b$ needs to be exactly $\frac{11}{18}$.
	\end{itemize}
	
	\begin{theorem}\label{Thm, bilin est, general}
		Let $\a_1,\a_2\in\m{R}\setminus\{0\}$ and denote $r=\frac{\a_2}{\a_1}$. Assume $r$, $s$ and the bilinear estimate type belong to any case in Table \ref{Table, bilin est}. Then there exist $b_0=b_0(s)>\frac12$ and $\eps=\eps(\a_1,\a_2)$ such that for any $\frac12<b\leq b_0$ and for any $|\b_2-\b_1|\leq \eps$, the bilinear estimate holds with some constant $C=C(\a_1,\a_2,s,b)$.
		
		\begin{table}[!ht]
			\renewcommand\arraystretch{1.8}
			\begin{center}
				\begin{tabular}{|c|c|c|c|c|c|} \hline
					Type   & $r<0$    &  $0<r<\frac{1}{4}$ & $r=\frac{1}{4}$ & $r>\frac{1}{4}$, $r\neq 1$ &  $r=1$  \\ \hline
					(D1): (\ref{d1}) & $s>-\frac{3}{4}$ & $s>-\frac{3}{4}$  & $s\geq \frac{3}{4}$  &  $s\geq 0$  &  $s>-\frac{3}{4}$  \\\hline
					(D2): (\ref{d2}) &   &  $s>-\frac{3}{4}$ &  $s\geq \frac{3}{4}$ &  $s\geq 0$ &  $s>-\frac{3}{4}$  \\\hline
					(ND1): (\ref{nd1}) &  $s>-\frac{3}{4}$  &  $s>-\frac{3}{4}$  &  $s\geq \frac{3}{4}$ & $s\geq 0$  &  $s>0$  \\\hline
					(ND2): (\ref{nd2}) & $s>-\frac{3}{4}$  &  $s>-\frac{3}{4}$  &  $s\geq \frac{3}{4}$  & $s\geq 0$  &  $s>0$ \\\hline
				\end{tabular}	 
			\end{center} 
			\caption{Bilinear Estimates}
			\label{Table, bilin est}
		\end{table}
	\end{theorem}
	
	The indexes in Table \ref{Table, bilin est} are also sharp.
	
	\begin{theorem}\label{Thm, sharp bilin est, general}
		Let $\a_1,\a_2\in\m{R}\setminus\{0\}$ and denote $r=\frac{\a_2}{\a_1}$. Let $\b_1=\b_2=\b$. Assume $r$, $s$ and the bilinear estimate type belong to any case in Table \ref{Table, sharp bilin est}. Then for any $b\in\m{R}$, there does not exist a constant $C=C(\a_1,\a_2,\b,s,b)$ such that the bilinear estimate holds.
		
		\begin{table}[!ht]
			\renewcommand\arraystretch{1.8}
			\begin{center}
				\begin{tabular}{|c|c|c|c|c|c|} \hline
					Type   & $r<0$    &  $0<r<\frac{1}{4}$ & $r=\frac{1}{4}$ & $r>\frac{1}{4}$, $r\neq 1$ &  $r=1$  \\ \hline
					(D1): (\ref{d1}) & $s<-\frac{3}{4}$ & $s<-\frac{3}{4}$  & $s< \frac{3}{4}$  &  $s< 0$  &  $s<-\frac{3}{4}$  \\\hline
					(D2): (\ref{d2}) &   &  $s<-\frac{3}{4}$ &  $s< \frac{3}{4}$ &  $s< 0$ &  $s<-\frac{3}{4}$  \\\hline
					(ND1): (\ref{nd1}) &  $s<-\frac{3}{4}$  &  $s<-\frac{3}{4}$  &  $s< \frac{3}{4}$ & $s< 0$  &  $s<0$  \\\hline
					(ND2): (\ref{nd2}) & $s<-\frac{3}{4}$  &  $s<-\frac{3}{4}$  &  $s<\frac{3}{4}$  & $s< 0$  &  $s<0$ \\\hline
				\end{tabular}	 
			\end{center} 
			\caption{Sharpness of Bilinear Estimates}
			\label{Table, sharp bilin est}
		\end{table}
	\end{theorem}
	
	There are several things worth mentioning about Theorem \ref{Thm, bilin est, general} and \ref{Thm, sharp bilin est, general}. First, when $r<0$, the critical index for Type (D1) is $-\frac34$ which is much larger than that for Type (D2), see Theorem \ref{Thm, d2, neg}. Secondly, when $r=\frac14$, the critical index is $\frac34$ which is much larger than other cases for $r\neq \frac14$. Thirdly, when $r=1$, the critical index is $-\frac34$ for the divergence forms but is $0$ for the non-divergence forms. 
	
	\begin{remark}\label{Re, pre bilin result}
		Some results in Theorem \ref{Thm, bilin est, general} and \ref{Thm, sharp bilin est, general} have already been known (or can be proven similarly) in the previous literatures. More specifically, in Table \ref{Table, bilin est} and \ref{Table, sharp bilin est}, when $r=1$, Type (D1) and (D2) have been established in \cite{KPV96}; when $r>\frac14$ but $r\neq 1$, Type (D1) and (D2) have been justified in \cite{Oh09a}, and Type (ND1) and (ND2) can be proven similarly. But note that the notations in \cite{Oh09a} are slightly different from here. Actually, the roles of $\a_1$ and $\a_2$ are interchanged there. In Table 8 with $r=-1$, Type (D1) has appeared in \cite{AC08}.
	\end{remark}
	
	The proofs of Theorem \ref{Thm, d2, neg}--\ref{Thm, sharp bilin est, general} are very technical and tedious, so we postpone them to Section \ref{Sec, proof for bilin est} and \ref{Sec, sharp of bilin est}.
	
	\section{Proofs of the main results on well-posedness} 
	\label{Sec, proof of wp}
	
	\subsection{Proof of Theorem \ref{Thm, lwp for ckdv}}
	\label{Subsec, proof for main thm on wp}
	The proofs for the  local well-posedness  results   in this paper will use the scaling argument as in \cite{KPV96}. This argument reduces the proofs to the case when the initial data and the elements $b_{ij}$ in the matrix $B$ are sufficiently small. Define the functions $u^{\lam}$ and $v^{\lam}$ for $\lam\geq 1$ as follows:
	\be\label{cov for div form}
	\left\{\begin{aligned}
		u^{\lam}(x,t)=\lam^{-2}u(\lam^{-1}x,\lam^{-3}t), \\
		v^{\lam}(x,t)=\lam^{-2}v(\lam^{-1}x,\lam^{-3}t), 
	\end{aligned}\quad x\in\m{R}, \,t\in\m{R}.\right.\ee
	Then (\ref{ckdv, coef form}) is equivalent to the system below.
	
	\be\label{ckdv scaling, coef form}
	\left\{\begin{array}{rcl}
		u^{\lam}_{t}+a_{1}u^{\lam}_{xxx}+b_{11}^{\lam}u^{\lam}_{x} &=& -b_{12}^{\lam}v^{\lam}_{x}+c_{11}u^{\lam}u^{\lam}_{x}+c_{12}v^{\lam}v^{\lam}_{x}+d_{11}u^{\lam}_{x}v^{\lam}+d_{12}u^{\lam}v^{\lam}_{x},\vspace{0.03in}\\
		v^{\lam}_{t}+a_{2}v^{\lam}_{xxx}+b_{22}^{\lam}v^{\lam}_{x} &=& -b_{21}^{\lam}u^{\lam}_{x}+c_{21}u^{\lam}u^{\lam}_{x}+c_{22}v^{\lam}v^{\lam}_{x}+d_{21}u^{\lam}_{x}v^{\lam}+d_{22}u^{\lam}v^{\lam}_{x}, \vspace{0.03in}\\
		(u^{\lam},v^{\lam})(x,0) &=& (u^{\lam}_{0},v^{\lam}_{0})(x),
	\end{array}\right.\ee
	where $b^{\lam}_{ij}=\lam^{-2}b_{ij}$ and 
	\[\left\{
	\begin{aligned}
		u^{\lam}_{0}(x) =\lam^{-2}u_{0}(\lam^{-1}x), \\
		v^{\lam}_{0}(x) =\lam^{-2}v_{0}(\lam^{-1}x),
	\end{aligned} 
	\qquad x\in\m{R}.
	\right.\]
	Since $\lam\geq 1$ and $s\geq-\frac{13}{12}$, then 
	\[\left\{\begin{array}{ll}
		\|u^{\lam}_{0}\|_{H^{s}(\m{R})}\leq \lam^{-\frac{5}{12}}\|u_{0}\|_{H^{s}(\m{R})}, \\
		\|v^{\lam}_{0}\|_{H^{s}(\m{R})}\leq \lam^{-\frac{5}{12}}\|v_{0}\|_{H^{s}(\m{R})}.
	\end{array}\right.\]
	Consequently, as $\lam\rightarrow\infty$, 
	\be\label{small data}
	\max_{1\leq i,j\leq 2} |b^{\lam}_{ij}|\rightarrow 0\quad\text{and}\quad 
	\max_{1\leq i,j\leq 2}(|c_{ij}|+|d_{ij}|)\big(\|u^{\lam}_{0}\|_{H^{s}(\m{R})}+\|v^{\lam}_{0}\|_{H^{s}(\m{R})}\big)\rightarrow 0.\ee
	So in order to prove the local well-posedness of (\ref{ckdv, coef form}), it suffices to justify the statement below.

	\begin{proposition}\label{Prop, lwp for s-ckdv}
		Let $a_{1},\,a_{2}\in\m{R}\setminus\{0\}$ and denote $r=\frac{a_{2}}{a_{1}}$. Assume $r$, $s$ and the coefficients $b_{ij}$, $c_{ij}$ and $d_{ij}$ belong to  any case in Table \ref{Table, lwp for ckdv} of Theorem \ref{Thm, lwp for ckdv}. Let $T>0$ be given. Then there exists a constant $\eps=\eps(a_{1},a_{2},s, T)$ such that  if
		\be\label{smallness of coef} 
		\max_{1\leq i,j\leq 2}|b_{ij}|\leq \eps\,\quad\text{and}\quad \max_{1\leq i,j\leq 2}(|c_{ij}|+|d_{ij}|)\big(\|u_{0}\|_{H^{s}(\m{R})}+\|v_{0}\|_{H^{s}(\m{R})}\big)\leq\eps,\ee
		then (\ref{ckdv, coef form})  admits a unique solution $(u, v) \in C\big([0,T]; \mcal{H}^s (\m{R})\big)$ satisfying the auxiliary  condition
		\[ \|u\|_{X^{a_1, b_{11}}_{s,b} ([0,T])}  + \|v\|_{X^{a_2,b_{22}}_{s,b}([0,T]) }< +\infty \]
		with some $\frac12 < b\leq \frac23$.  Moreover,  the corresponding solution map is real analytic in the corresponding spaces.
	\end{proposition}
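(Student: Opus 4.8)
The plan is to prove Proposition \ref{Prop, lwp for s-ckdv} by the standard contraction mapping argument in the Fourier restriction spaces, following the blueprint of Kenig--Ponce--Vega \cite{KPV96}, but now with the two unknowns living in two \emph{different} Bourgain spaces dictated by their respective dispersion coefficients. Concretely, for a fixed $b\in(\tfrac12,\tfrac23]$ to be chosen depending on the case in Table \ref{Table, lwp for ckdv} (in particular $b=\tfrac{11}{18}$ in the extremal subcase of Case (1)), I would work on the complete metric space
\[
\mathcal{Z}^T_{s,b} = \Big\{ (u,v): \|u\|_{X^{a_1,b_{11}}_{s,b}([0,T])} + \|v\|_{X^{a_2,b_{22}}_{s,b}([0,T])} \leq R \Big\},
\]
with $R$ proportional to $\|u_0\|_{H^s}+\|v_0\|_{H^s}$, and consider the Duhamel map
\[
\Phi(u,v) = \Big( \psi(t) S^{a_1,b_{11}}(t)u_0 + \psi(t)\!\int_0^t \! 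S^{a_1,b_{11}}(t-t')\mathcal{N}_1(u,v)\,dt',\ \ \text{analogous for } v \Big),
\]
where $\mathcal{N}_1,\mathcal{N}_2$ collect the linear-coupling terms $b_{12}v_x$, $b_{21}u_x$ and the quadratic terms. (As usual the $X^{\alpha,\beta}_{s,b}([0,T])$ norms are replaced by extensions to all of $\mathbb R^2$ multiplied by a time cutoff, so that the global estimates of Section \ref{Sec, Pre} and Section \ref{Sec, proof for bilin est} apply; the precise bookkeeping of the $([0,T])$ restriction and the gain of a small power of $T$ from the time cutoff I would carry out exactly as in \cite{KPV96}.)

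The key steps, in order: first, apply the linear estimates of Lemma \ref{Lemma, lin est for KdV} to bound the free-evolution term by $C(b)(\|u_0\|_{H^s}+\|v_0\|_{H^s})$ and to reduce each Duhamel term to an $X^{\alpha_i,\beta_i}_{s,b-1}$ estimate of the corresponding nonlinearity. Second, handle the linear coupling terms $b_{12}v_x$ and $b_{21}u_x$: since $a_1\neq a_2$ in every row of Table \ref{Table, lwp for ckdv} except $r=1$, Lemma \ref{Lemma, est for lin term} gives $\|b_{12}v_x\|_{X^{a_1,b_{11}}_{s,b-1}} \lesssim |b_{12}|\,\|v\|_{X^{a_2,b_{22}}_{s,b}}$ provided $|b_{11}-b_{22}|\leq\eps$, while for $r=1$ the hypothesis in Case (5) is exactly $b_{12}=b_{21}=0$, so these terms are absent — this is precisely why that constraint appears in the table. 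Third, bound the quadratic terms: $c_{11}uu_x$ and $c_{22}vv_x$ via (D1) (Theorem \ref{Thm, bilin est, general}); $c_{12}vv_x$ and $c_{21}uu_x$ via (D2) (Theorems \ref{Thm, d2, neg} and \ref{Thm, bilin est, general}); the divergence mixed term $(uv)_x$ via (D2); and the non-divergence mixed terms $u_xv,\,uv_x$ via (ND1) and (ND2) (Theorem \ref{Thm, bilin est, general}). One must check, case by case against Table \ref{Table, bilin est}, that every nonlinearity actually present (given the vanishing-coefficient hypotheses in each row) admits the required bilinear bound at the claimed value of $s$; the excluded nonlinearities in each "Otherwise"-free subcase are exactly those whose critical index would be too high. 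Fourth, assemble these into the contraction estimate
\[
\|\Phi(u,v)\|_{\mathcal{Z}^T_{s,b}} \leq C_0(\|u_0\|_{H^s}+\|v_0\|_{H^s}) + C_1\Big(\max_{i,j}|b_{ij}| + \max_{i,j}(|c_{ij}|+|d_{ij}|)R\Big)\|(u,v)\|_{\mathcal{Z}^T_{s,b}},
\]
together with the analogous difference estimate (bilinearity makes the difference estimate formally identical, with $R$ replaced by $2R$); choosing $R = 2C_0(\|u_0\|_{H^s}+\|v_0\|_{H^s})$ and then $\eps$ small enough (depending on $a_1,a_2,s$, and — through the time-cutoff power of $T$ — on $T$) that $C_1(\eps + \eps)< \tfrac12$ makes $\Phi$ a contraction on $\mathcal{Z}^T_{s,b}$. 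Fifth, the fixed point $(u,v)$ is the desired solution; membership in $C([0,T];\mathcal{H}^s)$ follows from the embedding $X^{\alpha,\beta}_{s,b}([0,T])\hookrightarrow C([0,T];H^s)$ for $b>\tfrac12$, uniqueness holds in $\mathcal{Z}^T_{s,b}$, and real analyticity of the solution map follows because $\Phi$ is affine-plus-bilinear in $(u_0,v_0)$ and in $(u,v)$, so the implicit function theorem / power-series argument (as in \cite{Bou93b}, \cite{KPV96}) applies verbatim.

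The main obstacle is not the contraction scheme itself — which is routine once the estimates are in hand — but the verification that the \emph{particular} value of $b$ forced by the sharp bilinear estimates can be chosen \emph{simultaneously} consistent with all the linear and bilinear estimates needed in a given case. The delicate point is Case (1) at the endpoint $s=-\tfrac{13}{12}$: Theorem \ref{Thm, d2, neg} then forces $b=\tfrac{11}{18}$ exactly, and one must check that Lemma \ref{Lemma, est for lin term} (which requires $b\leq\tfrac23$) and Lemma \ref{Lemma, lin est for KdV} (which requires $b>\tfrac12$) both still apply at $b=\tfrac{11}{18}\in(\tfrac12,\tfrac23]$ — which they do — and that no other nonlinearity present in this case (here only the divergence mixed term, since $(c_{ij})=0$) needs a $b$ outside this window; Theorem \ref{Thm, bilin est, general} for (D2) with $r<0$ is vacuous, so indeed only Theorem \ref{Thm, d2, neg} is in play, and the scaling computation already performed above (the exponent $-\tfrac{5}{12}$, requiring $s\geq -\tfrac{13}{12}$) shows the small-data reduction is legitimate at this endpoint. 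For all other cases $b$ can be taken in an open subinterval of $(\tfrac12,\tfrac23]$ with room to spare, so the only real care is the endpoint bookkeeping just described.
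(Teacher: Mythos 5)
Your proposal follows essentially the same route as the paper's proof: convert (\ref{ckdv, coef form}) to Duhamel form, run the contraction mapping argument in the product Bourgain space $X^{a_1,b_{11}}_{s,b}\times X^{a_2,b_{22}}_{s,b}$, invoke Lemma \ref{Lemma, lin est for KdV} for the linear part, Lemma \ref{Lemma, est for lin term} for the coupling terms $b_{12}v_x,\,b_{21}u_x$ (absent when $r=1$, which you correctly flag), and the bilinear estimates of Theorems \ref{Thm, d2, neg} and \ref{Thm, bilin est, general} for the quadratic terms, with the smallness hypotheses (\ref{smallness of coef}) (obtained from the scaling reduction) closing the contraction. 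Your endpoint bookkeeping at $s=-\frac{13}{12}$, $b=\frac{11}{18}$ is a correct amplification of the point the paper leaves implicit, and your remark on analyticity of the solution map via the affine-plus-bilinear structure matches the paper's stance.
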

	
	\begin{proof}[{\bf Proof of Proposition \ref{Prop, lwp for s-ckdv}}]
		We  only prove Case (1) with $r<0$, $(c_{ij})=0$, $d_{11}=d_{12}:=d_1$, $d_{21}=d_{22}:=d_2$ and $s\geq -\frac{13}{12}$. Other cases can be proved similarly by using appropriate bilinear estimates presented  in Theorem \ref{Thm, bilin est, general}. In addition, without loss of generality, we assume $T=1$. Hence, (\ref{ckdv, coef form}) with the assumption (\ref{smallness of coef}) becomes 
		\be\label{ckdv, coef form, special}
		\left\{\begin{array}{rcl}
			u_t+a_{1}u_{xxx}+b_{11}u_x &=& -b_{12}v_x+d_{1}(uv)_{x},\vspace{0.03in}\\
			v_t+a_{2}v_{xxx}+b_{22}v_x &=& -b_{21}u_x+d_{2}(uv)_{x}, \vspace{0.03in}\\
			\left. (u,v)\right |_{t=0} &=& (u_{0},v_{0})\in\mcal{H}^{s}(\m{R}),
		\end{array}\right.\ee
		where $a_1a_2<0$, $s\geq -\frac{13}{12}$ and 
		\be\label{smallness of coef, special} 
		\max_{1\leq i,j\leq 2}|b_{ij}|\leq \eps\,\quad\text{and}\quad (|d_1|+|d_2|)\big(\|u_{0}\|_{H^{s}(\m{R})}+\|v_{0}\|_{H^{s}(\m{R})}\big)\leq\eps,\ee
		for some $\eps=\eps(a_1,a_2,s)$ to be determined.
		
		By virtue of the semigroup operator $S_{i}=S^{a_{i},b_{ii}}$ for $i=1,2$, the Cauchy problem (\ref{ckdv, coef form, special}) for $t\in[0,1]$ can be converted into the integral form
		\be\label{integ form}
		\left \{ \begin{aligned} 
			u(t) &= \psi(t)\Big(S_{1}(t)u_{0}+\int_{0}^{t}S_{1}(t-t')F_{1}(u,v)(t')\,dt'\Big),\\ 
			v(t) &= \psi(t)\Big(S_{2}(t)v_{0}+\int_{0}^{t}S_{2}(t-t')F_{2}(u,v)(t')\,dt'\Big), \end{aligned}\right.\ee
		where $\psi(t)$ is the bump function defined at the beginning of Section \ref{Sec, Pre} and
		\be\label{nonlin term}
		\left\{\begin{aligned}
			F_{1}(u,v)=-b_{12}v_{x}+d_{1}(uv)_{x},\\
			F_{2}(u,v)=-b_{21}u_{x}+d_{2}(uv)_{x}.
		\end{aligned}\right.\ee 
		This suggests to consider the map $\Phi(u,v)\triangleq \big(\Phi_{1}(u,v), \Phi_{2}(u,v)\big)$, where 
		\be\label{contra map}
		\left\{\begin{aligned}
			\Phi_{1}(u,v) &=\psi(t)\Big(S_{1}(t)u_{0}+\int_{0}^{t}S_{1}(t-t')F_{1}(u,v)(t')\,dt'\Big),\\
			\Phi_{2}(u,v) &=\psi(t)\Big(S_{2}(t)v_{0}+\int_{0}^{t}S_{2}(t-t')F_{2}(u,v)(t')\,dt'\Big).
		\end{aligned}\right. \ee
		The goal is to show $\Phi $ is a contraction mapping in a ball in an appropriate Banach space, which will imply that the fixed point of $\Phi$ is the desired solution to the Cauchy problem (\ref{ckdv, coef form, special}) for $0\leq t\leq 1$.
		
		For  convenience,  let $Y^{i}_{s,b}=X_{s,b}^{a_{i},b_{ii}}$, $i=1,2$,  and $\mcal{Y}_{s,b} = Y^1_{s,b}\times Y^2_{s,b} $ equipped with the norm  
		\[   \|(u,v)\|_{\mcal{Y}_{s,b} }:=\|u\|_{Y^{1}_{s,b}}+\|v\|_{Y^{2}_{s,b}}.\] 
		Define $M_{1}=\max\limits_{1\leq i,j\leq 2}|b_{ij}|$ and $M_{2}=\max\limits_{1\leq i\leq 2}|d_i|$. Then  assumption (\ref{smallness of coef, special}) becomes
		\be\label{small M}
		M_{1}\leq \eps \quad\text{and}\quad M_{2}\big(\|u_{0}\|_{H^{s}(\m{R})}+\|v_{0}\|_{H^{s}(\m{R})}\big)\leq \eps.\ee
		Define 
		\be\label{soln space}
		\mcal{B}_{s,b,C}(u_0, v_0)=\big\{(u,v)\in \mcal{Y}_{s,b}: \|(u,v)\|_{\mcal{Y}_{s,b}} \leq C(\|u_{0}\|_{H^{s}}+\|v_{0}\|_{H^{s}})\big\}.\ee
		
		In the following, we will choose suitable $\eps$, $b$ and $C$ such that $\Phi$ is a contraction mapping on $\mcal{B}_{s,b,C}(u_0, v_0)$. We will first show that $\Phi$ maps  the closed ball $\mcal{B}_{s,b,C} (u_0, v_0)$ into itself. For any  $(u,v)\in \mcal{B}_{s,b,C} (u_0, v_0)$,  by Lemma \ref{Lemma, lin est for KdV}, for any $b>\frac12$, there exists a constant $C_{1}=C_{1}(b)$ such that 
		\be\label{est on Phi}\begin{split}
			\|\Phi_{1}(u,v)\|_{Y^{1}_{s,b}} & \leq C_{1}\|u_0\|_{H^{s}}+C_{1}\|F_{1}(u,v)\|_{Y^{1}_{s,b-1}},\\
			\|\Phi_{2}(u,v)\|_{Y^{2}_{s,b}} & \leq C_{1}\|v_0\|_{H^{s}}+C_{1}\|F_{2}(u,v)\|_{Y^{2}_{s,b-1}}.
		\end{split}\ee
		Since $F_{1}(u,v)=-b_{12}v_{x}+d_{1}(uv)_{x}$, we will estimate $\|b_{12}v_{x}\|_{Y^{1}_{s,b-1}}$ and $\|d_{1}(uv)_{x}\|_{Y^{1}_{s,b-1}}$ separately  in order to bound $\|F_{1}(u,v)\|_{Y^{1}_{s,b-1}}$. Since $a_1\neq a_2$, it follows from Lemma 2.2 that for any $b\in(\frac12,\frac23]$, there exist $\eps_{1}=\eps_{1}(a_1,a_2)$ and $C_2=C_2(a_1,a_2)$ such that for any $|b_{22}-b_{11}|\leq \eps_1$, 
		\[\|b_{12}v_{x}\|_{Y^{1}_{s,b-1}}\leq C_{2}|b_{12}|\|v\|_{Y^{2}_{s,b}}\leq C_{2}M_{1}\|v\|_{Y^{2}_{s,b}}.\]
		On the other hand, by Theorem \ref{Thm, d2, neg}, there exist $b^{*}=b^{*}(s)\in(\frac12,\frac23]$, $\eps_{2}=\eps_{2}(a_1,a_2)$ and $C_{3}=C_{3}(a_1,a_2,s,b^{*})$ such that for any $|b_{22}-b_{11}|\leq \eps_{2}$,  
		\[\|d_{1}(uv)_{x}\|_{Y^{1}_{s,b^{*}-1}}\leq C_{3}|d_{1}|\|u\|_{Y^{1}_{s,b^{*}}}\|v\|_{Y^{2}_{s,b^{*}}}\leq C_{3}M_2\|u\|_{Y^{1}_{s,b^{*}}}\|v\|_{Y^{2}_{s,b^{*}}}.\]
		Thus, for this particular $b^{*}$, taking $\eps_3=\min\{\eps_{1},\eps_2\}$ and $C_{4}=\max\{C_{1},C_{2},C_{3}\}$, then for any $|b_{11}|+|b_{22}|\leq \eps_3$, 
		\be\label{nonlin est 1}
		\|F_{1}(u,v)\|_{Y^{1}_{s,b^{*}-1}}\leq C_{4}\Big(M_1\|v\|_{Y^{2}_{s,b^{*}}}+M_2\|u\|_{Y^{1}_{s,b^{*}}}\|v\|_{Y^{2}_{s,b^{*}}}\Big).\ee
		Analogously, it also holds
		\be\label{nonlin est 2}
		\|F_{2}(u,v)\|_{Y^{2}_{s,b^{*}-1}}\leq C_{4}\Big(M_1\|u\|_{Y^{1}_{s,b^{*}}}+M_2\|u\|_{Y^{1}_{s,b^{*}}}\|v\|_{Y^{2}_{s,b^{*}}}\Big).\ee
		Adding (\ref{est on Phi}), (\ref{nonlin est 1}) and (\ref{nonlin est 2}) together yields that 
		\be\label{apriori bdd for Phi}\|\Phi(u,v)\|_{\mcal{Y}_{s,b^*}}\leq C_{5}\Big(\|u_0\|_{H^s}+\|v_0\|_{H^s}+M_{1}\|(u,v)\|_{\mcal{Y}_{s,b^*}}+M_2\|u\|_{Y^{1}_{s,b^*}}\|v\|_{Y^{2}_{s,b^*}}\Big),\ee
		where the constant $C_{5}$ only depends on $a_1$, $a_2$, $s$ and $b^*$. Actually, since $b^*$ is determined by $s$, $C_{5}$ only depends on $a_1$, $a_2$ and $s$. Denote $E_{0}=\|u_{0}\|_{H^{s}}+\|v_{0}\|_{H^{s}}$ and define
		\be\label{m-1} C^{*}=8C_{5}.\ee
		Then it follows from (\ref{soln space}) that $\|(u,v)\|_{\mcal{Y}_{s,b^*}}\leq C^{*}E_{0}$ for any $(u,v)\in \mcal{B}_{s,b^*,C^*} (u_0, v_0)$. Hence, it follows from (\ref{apriori bdd for Phi}) that
		\[\|\Phi(u,v)\|_{\mcal{Y}_{s,b^*}} \leq C_{5}E_{0}+C_{5}M_{1}C^{*}E_{0}+C_{5}M_{2}(C^{*})^{2}E_{0}^{2}.\]
		Since $C^{*}=8C_{5}$,
		\[\|\Phi(u,v)\|_{\mcal{Y}_{s,b^*}} \leq C_{5}E_{0}+8C_{5}^{2}M_{1}E_{0}+64C_{5}^{3}M_{2}E_{0}^{2}.\]
		Now choose 
		\be\label{choice of eps}\eps^{*}=\min\Big\{\frac{\eps_{3}}{2},\,\frac{1}{16C_{5}},\,\frac{1}{128C_{5}^{2}}\Big\}.\ee
		Then for any $(u,v)\in \mcal{B}_{s,b^*,C^*} (u_0, v_0)$, it follows from (\ref{small M}) and (\ref{choice of eps}) that
		\[\|\Phi(u,v)\|_{\mcal{Y}_{s,b^*}}\leq 2C_{5}E_{0}= \frac{C^{*}E_{0}}{4},\]
		which implies $\Phi(u,v)\in \mcal{B}_{s,b^*,C^*} (u_0, v_0)$. 
		
		Next for any $(u_j, v_j)\in \mcal{B}_{s,b^*,C^*} (u_0, v_0)$, $j=1,2$, the same argument yields 
		\[ \| \Phi (u_1, v_1 )- \Phi (u_2 , v_2)\|_{\mcal{Y}_{s,b^*}} \leq \frac12 \| (u_1, v_1)-(u_2, v_2)\|_{\mcal{Y}_{s,b^*}}.\]
		We  have thus shown  that $\Phi$ is a contraction on $\mcal{B}_{s,b^*,C^*} (u_0, v_0)$.  Its fixed point is the desired solution of   the system (\ref{ckdv, coef form, special}) on a time interval of size 1. 
	\end{proof}
	
	\subsection{Proofs of Theorem \ref{Thm, weak nonlin ckdv} -- Theorem \ref{Thm, G-G WP}}
	\label{Subsec, proof for other thm on wp}
	Theorem \ref{Thm, weak nonlin ckdv} follows directly from Case (1) in Theorem \ref{Thm, lwp for ckdv}. 
	
	For the Majda-Biello system (\ref{M-B system}), it is a special case of (\ref{ckdv, coef form}) with the coefficients
	\be\label{M-B system, coef}\begin{split}
		&a_{1}=1;\quad (b_{ij})=0;\quad c_{11}=c_{21}=c_{22}=0,\, c_{12}=-1;\\
		& d_{11}=d_{12}=0,\, d_{21}=d_{22}=-1. \end{split}\ee
	So the LWP results in Theorem \ref{Thm, M-B WP} follow directly from Theorem \ref{Thm, lwp for ckdv}. Then according to these LWP results and the conserved energies (\ref{M-B energy}), the GWP results in Theorem \ref{Thm, M-B WP} are established (except when $a_2=1$ for which case the GWP was proved for any $s>-\frac34$ by Oh\cite{Oh09b} via the I-method). 
	\be\label{M-B energy}\begin{split}
		E_{1}(u,v) &= \int u^{2}+v^{2}\,dx,\\
		E_{2}(u,v) &= \int u_{x}^{2}+a_{2}v_{x}^{2}-uv^{2}\,dx.\end{split}\ee

	For the Hirota-Satsuma system (\ref{H-S system}), it is a special case of (\ref{ckdv, coef form}) with the coefficients
	\be\label{H-S system, coef}\begin{split} &a_{2}=1;\quad (b_{ij})=0; \quad c_{11}=-6a_{1},\,c_{21}=c_{22}=0;\\ & d_{11}=d_{12}=d_{21}=0,\,d_{22}=-3. \end{split}\ee
	So the LWP results in Theorem \ref{Thm, H-S WP} follow directly from Theorem \ref{Thm, lwp for ckdv}. Then according to these LWP results and the conserved energies (\ref{H-S energy}), the GWP results in Theorem \ref{Thm, H-S WP} are established.
	\be\label{H-S energy}\begin{split}
		E_{1}(u,v) &= \int u^{2}+\frac{c_{12}}{3}v^{2}\,dx,\\
		E_{2}(u,v) &= \int (1-a_1)u_{x}^{2}+c_{12}v_{x}^{2}-2(1-a_{1})u^{3}-c_{12}uv^{2}\,dx.
	\end{split}\ee

	For the Gear-Grimshaw system (\ref{G-G system}), we first write it into the vector form:
	\be\label{G-G system, vector}
	\left\{\begin{array}{rcl}
		\bp u_t\\v_t  \ep + A_{1}\bp u_{xxx}\\v _{xxx}\ep + A_{2}\bp u_x\\v_x \ep &=& A_{3}\bp uu_x\\vv_x \ep + A_{4}\bp u_{x}v\\uv_{x}\ep,  \vspace{0.1in}\\
		\left. (u,v)\right |_{t=0} &=& (u_{0},v_{0}),
	\end{array}\right.\ee
	where 
	\[A_{1}=\bp 1& \sigma_{3}\\ \frac{\rho_{2}\sigma_{3}}{\rho_{1}} & \frac{1}{\rho_{1}} \ep,
	\quad A_{2}=\bp 0 &0\\0&\frac{\sigma_4}{\rho_1} \ep,\quad A_{3}=\bp -1 & \sigma_1 \\ \frac{\rho_2\sigma_2}{\rho_1} & -\frac{1}{\rho_1}\ep,\quad A_{4}=\bp \sigma_2 & \sigma_2\\ \frac{\rho_2\sigma_1}{\rho_1} & \frac{\rho_2\sigma_1}{\rho_1}\ep.\]
	When $\rho_2\sigma_3^2\neq 1$, $A_{1}$ has two nonzero eigenvalues $\lam_{1}$ and $\lam_{2}$:
	\be\label{eigen of A_1, G-G}
	\lam_{1}=\frac{\rho_1+1}{2\rho_1}+\frac{\sqrt{(\rho_1-1)^2+4\rho_1\rho_2\sigma_3^2}}{2\rho_1},\qquad \lam_{2}=\frac{\rho_1+1}{2\rho_1}-\frac{\sqrt{(\rho_1-1)^2+4\rho_1\rho_2\sigma_3^2}}{2\rho_1}.\ee
	So there exists an invertible real-valued matrix $M$ such that $A_{1}=M\bp\lam_{1}& \\ & \lam_{2}\ep M^{-1}$. By regarding $M^{-1}\bp u\\v\ep$ as the new unknown functions (still denoted by $u$ and $v$), (\ref{G-G system, vector}) can be rewritten as
	\be\label{G-G system, vector, diag}
	\left\{\begin{array}{rcl}
		\bp u_t\\v_t  \ep + \bp\lam_{1}&\\ &\lam_{2}\ep\bp u_{xxx}\\v _{xxx}\ep + B\bp u_x\\v_x \ep &=& C\bp uu_x\\vv_x \ep + D\bp u_{x}v\\uv_{x}\ep, \vspace{0.1in}\\
		\left. (u,v)\right |_{t=0} &=& (u_{0},v_{0}),
	\end{array}\right.\ee
	where $d_{11}=d_{12}$, $d_{21}=d_{22}$. In addition, $B=\bp 0&0\\ 0&0\ep$ if $\sigma_4=0$. Define 
	\be\label{r for G-G}
	r=\frac{\lam_2}{\lam_1}.\ee
	Then it follows from (\ref{eigen of A_1, G-G}) that $r<1$. Moreover, since both $\rho_1$ and $\rho_2$ are positive numbers, we have $r<0 \Longleftrightarrow \rho_2\sigma_3^2>1$. Moreover, $r=\frac14$ if and only if (\ref{G-G, r=1/4}) holds, that is
	\[\rho_2\sigma_3^2\leq \frac{9}{25} \quad\text{and}\quad \rho_{1}^2+\frac{25\rho_2\sigma_3^2-17}{4}\rho_1+1=0.\]
	Based on the above observations, the LWP results in Theorem \ref{Thm, G-G WP} follow from Theorem \ref{Thm, lwp for ckdv}. Then according to these LWP results and the conserved energies (\ref{G-G energy}), the GWP results in Theorem \ref{Thm, G-G WP} are established.
	\be\label{G-G energy}\begin{split}
		E_{1}(u,v) &= \int \rho_{2}u^{2}+\rho_{1}v^{2}\,dx,\\
		E_{2}(u,v) &= \int \rho_{2}u_{x}^{2}+v_{x}^{2}+2\rho_{2}\sigma_{3}u_{x}v_{x}-\frac{\rho_2}{3}u^{3}+\rho_2\sigma_{2}u^{2}v+\rho_{2}\sigma_{1}uv^{2}-\frac{1}{3}v^{3}-\sigma_{4}v^{2}.
	\end{split}\ee

	\section{Proofs of the bilinear estimates}
	\label{Sec, proof for bilin est}
	\quad 
	The goal of this section is to prove Theorem \ref{Thm, d2, neg} and Theorem \ref{Thm, bilin est, general}.

	\subsection{Idea of the proofs}
	\label{Subsec, idea of proof}
	
	The main idea of treating the bilinear estimates of different types are similar, and is thus explained only  for the following divergence form with $\b_i=0\,(i=1,2,3)$. 
	\be\label{bilin est, div, general}
	\|\p_{x}(w_{1}w_{2})\|_{X^{\a_{3}}_{s,b-1}}\ls \| w_{1}\|_{X^{\a_{1}}_{s,b}}\| w_{2}\|_{X^{\a_{2}}_{s,b}},\quad\forall\,w_{1},w_2.\ee
	By duality and Plancherel identity, (\ref{bilin est, div, general}) is equivalent to (see e.g. \cite{Tao01})
	\be\label{weighted l2 form, div, general}
	\Bigg|\int\limits_{\sum\limits_{i=1}^{3}\xi_{i}=0}\int\limits_{\sum\limits_{i=1}^{3}\tau_{i}=0}\frac{\xi_{3}\la\xi_{3}\ra^{s}\prod\limits_{i=1}^{3}f_{i}(\xi_{i},\tau_{i})}{\la\xi_{1}\ra^{s}\la\xi_{2}\ra^{s}\la L_{1}\ra^{b}\la L_{2}\ra^{b}\la L_{3}\ra^{1-b}}\Bigg| \leq C\,\prod_{i=1}^{3}\|f_{i}\|_{L^{2}_{\xi\tau}}, \quad\forall\,\{f_{i}\}_{1\leq i\leq 3},\ee
	where 
	\[L_{i}=\tau_{i}-\phi^{\a_{i}}(\xi_{i})=\tau_{i}-\a_{i}\xi_{i}^{3},\quad 1\leq i\leq 3.\]
	In (\ref{weighted l2 form, div, general}), the loss of the spatial derivative in the bilinear estimate (\ref{bilin est, div, general}) is reflected in the term $\frac{\xi_{3}\la\xi_{3}\ra^{s}}{\la\xi_{1}\ra^{s}\la\xi_{2}\ra^{s}}$ and the gain of the time derivative is reflected in  the term $\la L_{1}\ra^{b}\la L_{2}\ra^{b}\la L_{3}\ra^{1-b}$. Then how to compensate the loss of the spatial derivative from the gain of the time derivative is the key issue. Denote 
	\[K_{1}=\frac{\xi_{3}\la\xi_{3}\ra^{s}}{\la\xi_{1}\ra^{s}\la\xi_{2}\ra^{s}}\quad\text{and}\quad K_{2}=\la L_{1}\ra^{b}\la L_{2}\ra^{b}\la L_{3}\ra^{1-b}.\]
	Then the main idea is to control $K_{1}$ by taking advantage of $K_{2}$. Since $\sum\limits_{i=1}^{3}\xi_{i}=0$, then $\la\xi_{3}\ra\leq \la\xi_{1}\ra\la\xi_{2}\ra$. As a result, $K_{1}$ is a decreasing function in $s$, which means the smaller $s$ is, the more likely the bilinear estimate will fail. So the question is  how to find the smallest $s$ such that the bilinear estimate holds. Noticing that $L_{i}$ contains the time variable $\tau_{i}$, so a single $L_{i}$ can barely have any contributions. Since $\sum\limits_{i=1}^{3}\tau_{i}=0$, then $\sum\limits_{i=1}^{3}L_{i}=-\sum\limits_{i=1}^{3}\a_{i}\xi_{i}^{3}$ is a function only in $\xi_{i}$, $1\leq i\leq 3$. Define 
	\[H(\xi_1,\xi_2,\xi_3):=\sum_{i=1}^{3}\a_{i}\xi_{i}^{3}.\]
	Then it is obvious that $K_{2}\gs |H|^{\min\{b,\,1-b\}}$, which may be used to control $K_{1}$. Thus, $H(\xi_{1},\xi_{2},\xi_{3})$ plays a fundamental role. In addition, $H$ measures to what extent the spatial frequencies $\xi_{1}$, $\xi_{2}$ and $\xi_{3}$ can resonate with each other. Because of this, $H$ is called the {\it resonance function} (see Page 856 in \cite{Tao01}). Unfortunately, $|H|$ is not always large, the situation may become complicated near the region where $H$ vanishes. We shall call the zero set of $H$ to be the {\it resonance set}. Usually, the worst situation occurs near the resonance set and this trouble is  called {\it resonant interactions} (see Page 856 in \cite{Tao01}). 
	
	In the following, we will investigate the resonance function and the resonance set in three typical situations (again $\{\b_i\}_{i=1}^{3}$ are assumed to be zero for simplicity).
	\begin{itemize}
		\item In the classical case when $\a_1=\a_2=\a_3$, the resonance function $H_{0}$ is in a very simple form:
		\[H_0(\xi_1,\xi_2,\xi_3)=3\a_1\xi_1\xi_2\xi_3.\]
		The resonance set consists of three hyperplanes: $\{\xi_i=0\}$, $i=1,2,3$. 
		
		\item For the bilinear estimate of Type (D1), the resonance function $H_{1}$ is 
		\[H_{1}(\xi_1,\xi_2,\xi_3)=\a_1\xi_1^3+\a_1\xi_2^3+\a_2\xi_3^3.\]
		By writing $\xi_2=-(\xi_1+\xi_3)$, 
		\[H_{1}(\xi_1,\xi_2,\xi_3)=\xi_{3}\Big[(\a_2-\a_1)\xi_3^2-3\a_1\xi_1\xi_3-3\a_1\xi_1^2\Big].\]
		So $\{\xi_3=0\}$ belongs to the resonance set. If $\xi_3\neq 0$, then $H_1$ can be rewritten as 
		\[H_{1}(\xi_1,\xi_2,\xi_3)=-3\a_1\xi_3^3h_{r}\Big(\frac{\xi_1}{\xi_3}\Big),\]
		where $r=\frac{\a_2}{\a_1}$ and 
		\[h_{r}(x):=x^2+x+\frac{1-r}{3}.\]
		So the resonance set is determined by the roots of $h_{r}$.
		
		\item Similarly, for the bilinear estimate of Type (D2), the resonance function $H_{2}$ is 
		\[H_{2}(\xi_1,\xi_2,\xi_3)=\a_1\xi_1^3+\a_2\xi_2^3+\a_1\xi_3^3.\]
		By writing $\xi_3=-(\xi_1+\xi_2)$, 
		\[H_{2}(\xi_1,\xi_2,\xi_3)=\xi_{2}\Big[(\a_2-\a_1)\xi_2^2-3\a_1\xi_1\xi_2-3\a_1\xi_1^2\Big].\]
		So $\{\xi_2=0\}$ belongs to the resonance set. If $\xi_2\neq 0$, then $H_2$ can be rewritten as 
		\[H_{2}(\xi_1,\xi_2,\xi_3)=-3\a_1\xi_2^3h_{r}\Big(\frac{\xi_1}{\xi_2}\Big).\]
		Again the resonance set is determined by the roots of $h_{r}$.
	\end{itemize}
	Due to the above observations, the function $h_{r}$ is crucial in determining the resonance set. The roots of $h_r$ have three possibilities.
	\begin{itemize}
		\item[(1)] If $r<\frac14$, then $h_{r}$ does not have any real roots.
		\item[(2)] If $r=\frac14$, then $h_{r}$ has one real root $-\frac12$ of multiplicity 2.
		\item[(3)] If $r>\frac14$, then $h_{r}$ has two distinct real roots.
	\end{itemize}
	As we have seen, the structure of $H_1$ is analogous to that of $H_{2}$. In addition, $H_0$ is just a special case of $H_2$ when $r=1$. So in the following, we will just focus on $H_{2}$ to discuss the effect of the resonance set on the threshold of $s$. 
	
	\begin{enumerate}[(i)]
		\item $r=1$. This agrees with the classical case and $h_{r}$ has two roots $x_{1r}=-1$ and $x_{2r}=0$. As we have seen that the resonance set of $H_0$ consists of three hyperplanes $\{\xi_{i}=0\}$, $1\leq i\leq 3$.  When $s<0$, by writing $\rho=-s$, then $\rho>0$ and 
		\[K_{1}\sim \la\xi_{1}\ra^{\rho}\la\xi_{2}\ra^{\rho}|\xi_{3}|^{1-\rho}.\]
		As a result, $K_{1}$ is also small near the resonance set $\{\xi_{i}=0\}$, $1\leq i\leq 3$, which means the resonant interactions do not cause too much trouble. This is why the sharp index for the bilinear estimate of the divergence form can be as low as $-\frac{3}{4}$ as shown in Lemma \ref{kpv-lemma}.

		\item $r<\frac{1}{4}$. In this case, there exists a positive constant $\delta_{r}$ such that $h_{r}(x)\geq \delta_{r}$ for any $x\in\m{R}$. Consequently, the resonance set is only a single hyperplane $\{\xi_{2}=0\}$. Moreover, $|\xi_{2}|\ll 1$ and $|\xi_{1}|\sim |\xi_{3}|$ near this hyperplane. As a result, $|K_{1}|\sim |\xi_{3}|$ does not depend on $s$ at all, which means the resonant interactions have no effect on $s$ in this case. So there is hope to obtain an even smaller threshold for $s$. Actually, for Type (D2) with $r<0$, $s$ can be as small as $-\frac{13}{12}$. 
		
		\item $r>\frac{1}{4}$ and $r\neq 1$.
		In this case, $h_{r}$ has two distinct nonzero roots $x_{1r}$ and $x_{2r}$. Therefore, 
		\[H_{2}(\xi_{1},\xi_{2},\xi_{3})=-3\a_1\xi_{2}^{3}\Big(\frac{\xi_1}{\xi_2}-x_{1r}\Big)\Big(\frac{\xi_1}{\xi_2}-x_{2r}\Big).\]
		The resonance set consists of three different hyperplanes: $\{\xi_{2}=0\}$, $\{\xi_{1}=x_{1r}\xi_{2}\}$ and $\{\xi_{1}=x_{2r}\xi_{2}\}$. If $s<0$, then near the hyperplane $\{\xi_{1}=x_{1r}\xi_{2}\}$ or $\{\xi_{1}=x_{2r}\xi_{2}\}$ with large $\xi_{2}$, the resonance function $H_{2}$ is small while $K_{1}$ is large. Thus, the bilinear estimate is likely to fail. Actually, the threshold for $s$ in this case is $s\geq 0$. This has already been pointed out by Oh\cite{Oh09a}.
		
		\item $r=\frac{1}{4}$. In this case, 
		\[H_{2}(\xi_{1},\xi_{2},\xi_{3})=-3\a_1\xi_{2}^{3}\Big(\frac{\xi_1}{\xi_2}+\frac12\Big)^{2}.\]
		The resonance set consists of two hyperplanes $\{\xi_{2}=0\}$ and $\{\xi_{1}=-\frac12\xi_{2}\}$. But the resonance interaction is significant near the hyperplane $\{\xi_{1}=-\frac12\xi_{2}\}$ due to the square power. Consequently, the situation is expected to be worse. Actually, the bilinear estimate is valid only for $s\geq \frac{3}{4}$. 
	\end{enumerate}

	In addition to the resonant interactions, there is another trouble coming from {\it coherent interactions}  (see \cite{Tao01}) when one has $\nabla \phi^{\a_{1}}(\xi_{1})=\nabla \phi^{\a_{2}}(\xi_{2})$, that is $\a_{1}\xi_{1}^{2}=\a_{2}\xi_{2}^{2}$. Geometrically, coherent interactions occur when the surfaces $\tau_{1}=\phi^{\a_{1}}(\xi_{1})$ and $\tau_{2}=\phi^{\a_{2}}(\xi_{2})$ fail to be transverse. For example, when $r<\frac14$, as we just discussed above, the resonance set of $H_{2}$ is a sinlge hyperplane: $\{\xi_{2}=0\}$ no matter $r<0$ or $0<r<\frac{1}{4}$. However, the critical indexes for $s$ are different in these two cases. 
	\begin{itemize}
		\item If $r<0$, then $\a_{1}\xi_{1}^{2}$ will not match $\a_{2}\xi_{2}^{2}$ regardless of the values of $\xi_{1}$ and $\xi_{2}$. So the coherent interactions do not occur in this case and the sharp index for $s$ is $-\frac{13}{12}$.
		
		\item If $r>0$, then $\a_{1}\xi_{1}^{2}=\a_{2}\xi_{2}^{2}$ when $\xi_{1}=\pm \sqrt{r}\xi_{2}$. So the coherent interactions occur along the hyperplanes $\{\xi_{1}=\pm \sqrt{r}\xi_{2}\}$. It turns out that the critical index for this case is $-\frac{3}{4}$.
	\end{itemize}
	
	The above arguments revealed the difficulties for the bilinear estimate of the divergence form. These difficulties play the similar role in the nondivergence case. But the nondivergence form can bring additional trouble. Let us compare (D2) and (ND1) with $r=1$ and $\b_{i}=0\,(i=1,2)$. In this case, the resonance functions for (D2) and (ND1) are the same, both of them are equal to $H_0$. However, the terms $K_{1}$ and $\widetilde{K}_{1}$ coming from the loss of the spatial derivative for (D1) and (ND1) are different. More precisely, 
	\[K_{1}=\frac{\xi_{3}\la\xi_{3}\ra^{s}}{\la\xi_{1}\ra^{s}\la\xi_{2}\ra^{s}}\quad\text{and}\quad \widetilde{K}_{1}=\frac{\xi_{1}\la\xi_{3}\ra^{s}}{\la\xi_{1}\ra^{s}\la\xi_{2}\ra^{s}}.\]
	Consider $s=-\frac{3}{4}+$, then 
	\[|K_{1}|\sim \frac{|\xi_3|\la\xi_{1}\ra^{\frac34 -}\la\xi_{2}\ra^{\frac34 -}}{\la\xi_{3}\ra^{\frac34 -}}\quad\text{and}\quad |\widetilde{K}_{1}|\sim\frac{|\xi_1|\la\xi_{1}\ra^{\frac34 -}\la\xi_{2}\ra^{\frac34 -}}{\la\xi_{3}\ra^{\frac34 -}}.\]
	Previously, the worst region for (D2) is when $|\xi_1|\sim |\xi_2|\gg |\xi_3|$ and this forces $s$ to be greater than $-3/4$. But in this region, it is easily seen that $\widetilde{K}_{1}$ is even much larger than $K_{1}$. So there is no hope to control $\widetilde{K}_{1}$ as well when $s$ is near $-3/4$. Actually, it will be shown that the critical index for (ND1) is 0.
	
	In summary, there are three main troubles in establishing the bilinear estimates (\ref{d1})-(\ref{nd2}).
	\begin{itemize}
		\item[(T1)]: {\it resonant interactions};
		\item[(T2)]: {\it coherent interaction};
		\item[(T3)]: the nondivergence form in the region $|\xi_{1}|\sim |\xi_{2}|\gg |\xi_{3}|$.
	\end{itemize}
	Generally speaking, (T1) is the most significant trouble and (T2) and (T3) are of the same level of influence. In most cases, these troubles do not occur at the same place, then the strategy is simply to divide the region suitably and  deal with one trouble in each region. However, if more than one trouble occur at the same place, then the situation is expected to be worse.
	In the following, we provide  Table \ref{Table, bilin est, Trouble and CI} to present the main trouble and the critical indexes for $s$ in each case for the bilinear estimates (\ref{d1})-(\ref{nd2}).  The sign  ``+"  indicates the situation when two troubles occur at the same place.
	\begin{table}[!ht]
		\renewcommand\arraystretch{1.6}
		\begin{center}
			\begin{tabular}{|c|c|c|c|c|c|} \hline
				& $r<0$    &  $0<r<\frac{1}{4}$ & $r=\frac{1}{4}$ & $r>\frac{1}{4}$, $r\neq 1$ &  $r=1$  \\ \hline
				(D1): (\ref{d1}) & \begin{tabular}{c} (T2) \\ $-\frac{3}{4}$ \end{tabular} & \begin{tabular}{c} (T2) \\ $-\frac{3}{4}$ \end{tabular} & \begin{tabular}{c} (T1)+(T2) \\ $\frac{3}{4}$ \end{tabular} & \begin{tabular}{c} (T1) \\ $ 0$ \end{tabular} & \begin{tabular}{c} (T2) \\ $-\frac{3}{4}$ \end{tabular} \\\hline
				(D2): (\ref{d2}) & \begin{tabular}{c} None \\ $ -\frac{13}{12}$ \end{tabular} & \begin{tabular}{c} (T2) \\ $-\frac{3}{4}$ \end{tabular} & \begin{tabular}{c} (T1)+(T2) \\ $ \frac{3}{4}$ \end{tabular} & \begin{tabular}{c} (T1) \\ $ 0$ \end{tabular} & \begin{tabular}{c} (T2) \\ $-\frac{3}{4}$ \end{tabular} \\\hline
				(ND1): (\ref{nd1}) & \begin{tabular}{c} (T3) \\ $-\frac{3}{4}$ \end{tabular} & \begin{tabular}{c} (T2) or (T3) \\ $-\frac{3}{4}$ \end{tabular} & \begin{tabular}{c} (T1)+(T2) \\ $ \frac{3}{4}$ \end{tabular} & \begin{tabular}{c} (T1) \\ $ 0$ \end{tabular} & \begin{tabular}{c} (T2)+(T3) \\ $0$ \end{tabular} \\\hline
				(ND2): (\ref{nd2}) & \begin{tabular}{c} (T3) \\ $-\frac{3}{4}$ \end{tabular} & \begin{tabular}{c} (T2) or (T3) \\ $-\frac{3}{4}$ \end{tabular} & \begin{tabular}{c} (T1)+(T2) \\ $ \frac{3}{4}$ \end{tabular}  & \begin{tabular}{c} (T1) \\ $ 0$ \end{tabular} & \begin{tabular}{c} (T2)+(T3) \\ $0$ \end{tabular}\\\hline
			\end{tabular}	 
		\end{center} 
		\caption{Troubles and Critical Indexes ($r=\frac{\a_2}{\a_1}$)}
		\label{Table, bilin est, Trouble and CI}
	\end{table}

	\subsection{Auxiliary lemmas}
	\label{Subsec, Aux lemma}
	
	\begin{lemma}\label{Lemma, int in tau}
		Let $\rho_{1}>1$ and $0\leq\rho_{2}\leq\rho_{1}$ be given. There exists  a constant $C=C(\rho_{1},\rho_{2})$ such that for any $\a,\b\in\m{R}$,
		\be\label{int in tau}
		\int_{-\infty}^{\infty}\frac{dx}{\la x-\a \ra^{\rho_{1}} \la -x-\b \ra^{\rho_{2}}}\leq \frac{C}{\la\a+\b\ra^{\rho_{2}}}.\ee
	\end{lemma}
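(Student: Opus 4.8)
The plan is to translate the integral into a standard one-dimensional convolution bound and then split the line into two halves. First I would change variables $y = x-\a$; since $\la\cdot\ra$ is even, $\la x-\a\ra=\la y\ra$ and $\la -x-\b\ra=\la y+(\a+\b)\ra$, so with $a:=\a+\b$ the assertion is equivalent to
\be
\i_{\m{R}}\frac{dy}{\la y\ra^{\rho_{1}}\la y+a\ra^{\rho_{2}}}\ls\frac{1}{\la a\ra^{\rho_{2}}},
\ee
with the implicit constant depending only on $\rho_{1}$ and $\rho_{2}$. Before splitting I would record two elementary facts. First, $\i_{\m{R}}\la y\ra^{-\rho_{1}}\,dy<\infty$ because $\rho_{1}>1$. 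Second, for all $R\ge 0$,
\be
\i_{|z|\le R}\la z\ra^{-\rho_{2}}\,dz\ls\la R\ra^{\rho_{1}-\rho_{2}},
\ee
the constant depending on $\rho_{1},\rho_{2}$: when $\rho_{2}<1$ the left side is $\ls\la R\ra^{1-\rho_{2}}\le\la R\ra^{\rho_{1}-\rho_{2}}$ (using $\rho_{1}>1$); when $\rho_{2}>1$ the integral is bounded and $\la R\ra^{\rho_{1}-\rho_{2}}\ge 1$ since $\rho_{1}\ge\rho_{2}$; the borderline $\rho_{2}=1$ produces a logarithm $\log\la R\ra$, which is absorbed by $\la R\ra^{\rho_{1}-1}$ precisely because $\rho_{1}>1$.

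Next I would decompose $\m{R}$ according to whether $|y+a|\ge |a|/2$ or $|y+a|<|a|/2$. On the first set $\la y+a\ra\ge\tfrac12\la a\ra$, so that piece of the integral is at most $2^{\rho_{2}}\la a\ra^{-\rho_{2}}\i_{\m{R}}\la y\ra^{-\rho_{1}}\,dy\ls\la a\ra^{-\rho_{2}}$. On the second set the triangle inequality gives $|y|\ge |a|-|y+a|>|a|/2$, hence $\la y\ra^{\rho_{1}}\gs\la a\ra^{\rho_{1}}$; substituting $z=y+a$ and using the one-dimensional bound above, that piece is at most $C\la a\ra^{-\rho_{1}}\i_{|z|\le |a|/2}\la z\ra^{-\rho_{2}}\,dz\ls\la a\ra^{-\rho_{1}}\la a\ra^{\rho_{1}-\rho_{2}}=\la a\ra^{-\rho_{2}}$. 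Adding the two contributions yields the reduced inequality, and hence the lemma.

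There is no serious obstacle here: this is a routine convolution estimate and the argument is pure bookkeeping with the triangle inequality. The only place that requires a moment's care is the one-dimensional sub-estimate at the endpoint $\rho_{2}=1$, where one must invoke $\rho_{1}>1$ (rather than merely $\rho_{1}\ge\rho_{2}$) to absorb the logarithm — which is exactly why the hypothesis is stated as $\rho_{1}>1$. The two-region decomposition is chosen so that in each region one of the two weight factors is comparable to $\la a\ra$ raised to precisely the power needed, after which the remaining integral is either globally convergent or an integral over a bounded set handled by the elementary bound.
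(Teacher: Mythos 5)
Your proof is correct. The paper omits the proof, stating only that it is ``standard''; your argument is precisely the standard two-region convolution estimate. The reduction via $y=x-\a$, $a=\a+\b$, the split into $|y+a|\ge|a|/2$ versus $|y+a|<|a|/2$, and the elementary integral bound $\int_{|z|\le R}\la z\ra^{-\rho_2}\,dz\ls\la R\ra^{\rho_1-\rho_2}$ (with correct handling of all three subcases of $\rho_2$, including the logarithmic endpoint $\rho_2=1$ where $\rho_1>1$ is needed) are all handled carefully and the bookkeeping is right.
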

	\noindent
	The proof for this lemma is standard and therefore omitted, we just want to remark that $\la\a+\b\ra=\la(x-\a)+(-x-\b)\ra$, this observation will be used in the estimate (\ref{use of lemma: int in tau}).

	\begin{lemma}\label{Lemma, bdd int}
		If $\rho>\frac{1}{2}$, then there exists $C=C(\rho)$ such that for any $\sigma_{i}\in\m{R},\,0\leq i\leq 2$, with $\sigma_{2}\neq 0$,
		\be\label{bdd int for quad}
		\int_{-\infty}^{\infty}\frac{dx}{\la \sigma_{2}x^{2}+\sigma_{1}x+\sigma_{0}\ra^{\rho}}\leq \frac{C}{|\sigma_{2}|^{1/2}}.\ee
		Similarly, if $\rho>\frac{1}{3}$, then there exists $C=C(\rho)$ such that for any $\sigma_{i}\in\m{R},\,0\leq i\leq 3$, with $\sigma_{3}\neq 0$,
		\be\label{bdd int for cubic}
		\int_{-\infty}^{\infty}\frac{dx}{\la \sigma_{3}x^{3}+\sigma_{2}x^{2}+\sigma_{1}x+\sigma_{0}\ra^{\rho}}\leq \frac{C}{|\sigma_{3}|^{1/3}}.\ee
	\end{lemma}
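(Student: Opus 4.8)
The plan is to reduce both inequalities to a single sub-level set estimate for polynomials and then run a distribution-function argument. First I would record the following elementary fact: if $P$ is a real polynomial of degree $n$ (here $n=2$ or $n=3$) with leading coefficient $\sigma_n\neq 0$, then for every $R>0$,
\be\label{eq:sublevel}
\big|\{x\in\m{R}:|P(x)|\leq R\}\big|\,\leq\,2n\,\big(R/|\sigma_n|\big)^{1/n}.\ee
This follows by factoring $P$ over $\m{C}$ as $P(x)=\sigma_n\prod_{i=1}^{n}(x-z_i)$: if $|P(x)|\leq R$ then $\prod_{i=1}^{n}|x-z_i|\leq R/|\sigma_n|$, so $|x-z_{i_0}|\leq (R/|\sigma_n|)^{1/n}$ for some index $i_0$, whence $|x-\mathrm{Re}\,z_{i_0}|\leq (R/|\sigma_n|)^{1/n}$; thus the sub-level set is covered by the $n$ intervals of radius $(R/|\sigma_n|)^{1/n}$ about $\mathrm{Re}\,z_1,\dots,\mathrm{Re}\,z_n$.

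With \eqref{eq:sublevel} in hand, I would use $\la y\ra\geq 1$ and the layer-cake identity $\la y\ra^{-\rho}=\rho\int_{1}^{\infty}s^{-1-\rho}\,\mb{1}_{\{|y|\leq s-1\}}\,ds$ to write, for $P(x)=\sigma_n x^n+\cdots$ and by Fubini,
\be\label{eq:layer}
\int_{\m{R}}\frac{dx}{\la P(x)\ra^{\rho}}=\rho\int_{1}^{\infty}s^{-1-\rho}\,\big|\{x:|P(x)|\leq s-1\}\big|\,ds\,\leq\,\frac{2n\rho}{|\sigma_n|^{1/n}}\int_{1}^{\infty}s^{-1-\rho+\frac1n}\,ds.\ee
The last integral converges exactly when $\rho>\tfrac1n$ --- which is precisely the hypothesis $\rho>\tfrac12$ for \eqref{bdd int for quad} ($n=2$) and $\rho>\tfrac13$ for \eqref{bdd int for cubic} ($n=3$) --- and it equals $(\rho-\tfrac1n)^{-1}$, so \eqref{eq:layer} gives the stated bounds with $C=C(\rho)=2n\rho/(\rho-\tfrac1n)$.

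I do not expect a genuine obstacle: this is a classical real-variable estimate. The one point deserving attention is that every constant must depend only on $\rho$ (and on the degree, which is fixed at $2$ or $3$) and not on the lower-order coefficients $\sigma_0,\sigma_1,\sigma_2$; the argument above makes this manifest, since those coefficients influence only the locations $\mathrm{Re}\,z_i$ of the covering intervals in \eqref{eq:sublevel}, not their number or length. If one prefers to avoid the layer-cake identity, an equivalent route is to decompose dyadically into the regions $\{x:\la P(x)\ra\sim 2^{j}\}$, $j\geq 0$, bound each by $2^{-j\rho}\cdot 2n\,(2^{j+1}/|\sigma_n|)^{1/n}$ via \eqref{eq:sublevel}, and sum the resulting geometric series, which again converges under the same condition $\rho>\tfrac1n$.
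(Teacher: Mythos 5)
Your proof is correct. Note, however, that the paper itself does not supply a proof of this lemma: it only states that ``the proof follows exactly as Lemma 2.5 in \cite{BOP97}'' for the cubic case and that the quadratic case is similar, deferring entirely to that reference. So you have, in effect, supplied what the paper leaves out, and in a pleasantly uniform way. Your key step is the sub-level set bound $|\{x:|P(x)|\leq R\}|\leq 2n(R/|\sigma_n|)^{1/n}$, obtained by factoring $P$ over $\mathbb{C}$, observing that some factor must be small, and covering the sub-level set by the $n$ real intervals of radius $(R/|\sigma_n|)^{1/n}$ centered at the real parts of the roots; the triangle inequality $|x-\mathrm{Re}\,z_{i_0}|\leq|x-z_{i_0}|$ for real $x$ is exactly what makes the real covering legitimate. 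Coupling this with the layer-cake identity $\langle y\rangle^{-\rho}=\rho\int_1^\infty s^{-1-\rho}\mathbbm{1}_{\{|y|\leq s-1\}}\,ds$ and Fubini reduces everything to the scalar integral $\int_1^\infty s^{-1-\rho+1/n}\,ds$, which converges exactly when $\rho>1/n$, matching the hypotheses for $n=2$ and $n=3$ and producing a constant $C=2n\rho/(\rho-1/n)$ depending only on $\rho$ and the (fixed) degree. This contrasts with the strategy the authors use for the neighboring Lemma \ref{Lemma, int for cubic}, where they first depress the cubic by translation and then split the real line according to the size of the derivative $g'$; your factoring-plus-layer-cake route avoids any case analysis, treats the quadratic and cubic cases in one stroke, and makes the independence of $C$ from the lower-order coefficients $\sigma_0,\dots,\sigma_{n-1}$ transparent, since they only shift the centers of the covering intervals and never affect their number or length. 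The one thing your approach does \emph{not} give, and which the derivative-splitting approach of Lemmas \ref{Lemma, int for quad} and \ref{Lemma, int for cubic} does, is the refined decay in the remaining coefficients (e.g.\ the extra factor $\langle\sigma_0-\sigma_1^2/(4\sigma_2)\rangle^{-1/2}$ or $\langle 3\sigma_1-\sigma_2^2\rangle^{-1/4}$) when $\rho>1$; but that refinement is not part of the present lemma, so your argument is complete and appropriate here.
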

	
	\begin{proof}
		We refer the reader to the proof of Lemma 2.5 in \cite{BOP97} where (\ref{bdd int for cubic}) was  proved. The similar argument can also be applied to  obtain (\ref{bdd int for quad}).
	\end{proof}
	
	If the power $\rho$ in Lemma \ref{Lemma, bdd int} is greater than 1, then stronger conclusions hold.
	
	\begin{lemma}\label{Lemma, int for quad}
		Let $\rho>1$ be given. There exists  a constant $C=C(\rho)$ such that for any $\sigma_{i}\in\m{R},\,0\leq i\leq 2$, with $\sigma_{2}\neq 0$, 
		\be\label{int for quad}
		\int_{-\infty}^{\infty}\frac{dx}{\la \sigma_{2}x^{2}+\sigma_{1}x+\sigma_{0}\ra^{\rho}}\leq C\,|\sigma_{2}|^{-\frac{1}{2}}\Big\la \sigma_{0}-\frac{\sigma_{1}^{2}}{4\sigma_{2}}\Big\ra^{-\frac{1}{2}}.\ee
	\end{lemma}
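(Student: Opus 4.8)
The plan is to normalize the quadratic by completing the square and rescaling, and then to control the integral by splitting into the region where the argument $z^{2}+E$ is large compared with its size near the zero set and the region where it is not. First I would write
\[\sigma_{2}x^{2}+\sigma_{1}x+\sigma_{0}=\sigma_{2}\Big(x+\tfrac{\sigma_{1}}{2\sigma_{2}}\Big)^{2}+\Delta,\qquad \Delta:=\sigma_{0}-\tfrac{\sigma_{1}^{2}}{4\sigma_{2}},\]
translate $x$ to remove the linear term, and substitute $z=|\sigma_{2}|^{1/2}y$. Since $\la\cdot\ra$ depends only on absolute values and $\la\Delta\ra=\la-\Delta\ra$, the sign of $\sigma_{2}$ is immaterial once we prove the statement for all signs of $E$, and the lemma is reduced to the single inequality
\[\int_{-\infty}^{\infty}\frac{dz}{\la z^{2}+E\ra^{\rho}}\le C(\rho)\,\la E\ra^{-\frac12}\qquad\text{for all }E\in\m{R}\text{ and }\rho>1,\]
with the prefactor $|\sigma_{2}|^{-1/2}$ produced by the Jacobian.

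To prove the reduced inequality I would split $\m{R}$ into $\{|z|\ge 2|E|^{1/2}\}$ and $\{|z|<2|E|^{1/2}\}$. On the former, $|z^{2}+E|\ge z^{2}-|E|\ge \tfrac34 z^{2}$, so $\la z^{2}+E\ra\gs\la z\ra^{2}$ and the contribution is $\ls\int_{|z|\ge 2|E|^{1/2}}\la z\ra^{-2\rho}\,dz$, which is $\ls|E|^{\frac12-\rho}\ls|E|^{-1/2}$ when $|E|\ge 1$ (here $\rho>1$ is used) and is simply bounded when $|E|\le 1$; in either case this is $\ls\la E\ra^{-1/2}$. On the latter, if $E\ge 0$ then $\la z^{2}+E\ra\ge\la E\ra$, so the contribution is at most $4|E|^{1/2}\la E\ra^{-\rho}\le 4\la E\ra^{\frac12-\rho}\ls\la E\ra^{-1/2}$; and if $-1\le E<0$ the integrand is $\le 1$ on an interval of length $<4$, again giving $\ls\la E\ra^{-1/2}$.

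The delicate case — and the main obstacle — is $E=-\mu$ with $\mu>1$ on the region $\{|z|<2\sqrt{\mu}\}$, where $z^{2}+E$ vanishes at $z=\pm\sqrt{\mu}$, so the integrand equals $1$ on a set of positive measure and the crude bound of Lemma \ref{Lemma, bdd int} only gives $O(1)$. Here I would rescale $z=\sqrt{\mu}\,u$ to reach $\sqrt{\mu}\int_{|u|<2}\la\mu(u^{2}-1)\ra^{-\rho}\,du$, split $[-2,2]$ at $0$, and exploit $|u^{2}-1|=|u-1||u+1|\ge|u-1|$ on $[0,2]$ and $\ge|u+1|$ on $[-2,0]$; each half is then bounded by $\int_{\m{R}}\la\mu(u\mp1)\ra^{-\rho}\,du=\mu^{-1}\int_{\m{R}}\la t\ra^{-\rho}\,dt=C(\rho)\,\mu^{-1}$, which is finite precisely because $\rho>1$. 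This yields $\sqrt{\mu}\cdot C(\rho)\mu^{-1}=C(\rho)\mu^{-1/2}\ls\la\mu\ra^{-1/2}$, and assembling the four pieces completes the argument. The whole reason one insists on $\rho>1$ rather than $\rho>\tfrac12$ is exactly to make this measure estimate near the double preimage of the zero contribute the extra decay $\la\Delta\ra^{-1/2}$.
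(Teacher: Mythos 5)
Your argument is correct and takes the same route as the paper: complete the square, rescale by $|\sigma_{2}|^{1/2}$, and reduce to $\int_{\m{R}}\la z^{2}+E\ra^{-\rho}\,dz\ls\la E\ra^{-1/2}$, which is exactly the paper's reduction. The paper leaves this last inequality "to the reader," and your case analysis (far/near $|z|\sim|E|^{1/2}$, with the further rescaling $z=\sqrt{\mu}\,u$ and the factorization $|u^{2}-1|\ge|u\mp1|$ on each half near the double roots when $E=-\mu<-1$) correctly supplies it and correctly isolates why $\rho>1$, rather than merely $\rho>\tfrac12$, is required.
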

	\begin{proof}
		It suffices to consider the case when $\sigma_{2}>0$. By rewriting 
		\[\sigma_{2}x^{2}+\sigma_{1}x+\sigma_{0}=\sigma_{2}\Big(x+\frac{\sigma_{1}}{2\sigma_{2}}\Big)^{2}+\sigma_{0}-\frac{\sigma_{1}^{2}}{4\sigma_{2}}\]
		and doing a change of variable $y=\sqrt{\sigma_{2}}\big(x+\frac{\sigma_{1}}{2\sigma_{2}}\big)$, it reduces to show for any $\a\in\m{R}$, 
		\[\int_{-\infty}^{\infty}\frac{dy}{\la y^{2}+\a\ra^{\rho}}\leq C\la\a\ra^{-\frac{1}{2}},\]
		for which, the verification is straightforward and left to the readers.
	\end{proof}
	
	\begin{lemma}\label{Lemma, int for cubic}
		Let $\rho>1$ be given. There exists a constant $C=C(\rho)$ such that  for any $\sigma_{i}\in\m{R},\,0\leq i\leq 2$,
		\be\label{int for cubic}
		\int_{-\infty}^{\infty}\frac{dx}{\la x^3+\sigma_{2}x^{2}+\sigma_{1}x+\sigma_{0}\ra^{\rho}}\leq C\big\la 3\sigma_{1}-\sigma_{2}^{2}\big\ra^{-\frac14}.\ee
	\end{lemma}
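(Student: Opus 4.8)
The plan is to pass to a depressed cubic and then partition $\m{R}$ into regions on which either $f'$ or $f''$ is large, estimating the integral on each region by a change of variables or by a sublevel-set estimate. First I would substitute $x = y - \sigma_2/3$, which turns $x^3+\sigma_2x^2+\sigma_1x+\sigma_0$ into $y^3+py+q$ with $p=\sigma_1-\sigma_2^2/3$ and $q$ an explicit constant; then $3p = 3\sigma_1-\sigma_2^2$, and since $\langle p\rangle\leq\langle 3p\rangle\leq 3\langle p\rangle$ it suffices to prove
\[
\int_{-\infty}^{\infty}\frac{dy}{\langle y^3+py+q\rangle^{\rho}}\leq C(\rho)\,\langle p\rangle^{-1/4}
\]
uniformly in $q\in\m{R}$. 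The right-hand side is $\leq 1$, so when $|p|\leq 10$ this is immediate from Lemma \ref{Lemma, bdd int} (estimate (\ref{bdd int for cubic}) with leading coefficient $1$ and $\rho>1>\tfrac13$). Hence I may assume $|p|\geq 10$. Write $f(y)=y^3+py+q$, so $f'(y)=3y^2+p$ and $f''(y)=6y$.

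If $p\geq 10$, then $f'\geq p>0$, so $f$ is an increasing $C^1$-diffeomorphism of $\m{R}$; the change of variables $u=f(y)$ gives $\int\langle f\rangle^{-\rho}\,dy=\int\langle u\rangle^{-\rho}\big(f'(f^{-1}(u))\big)^{-1}\,du\leq p^{-1}\int\langle u\rangle^{-\rho}\,du\lesssim p^{-1}\leq\langle p\rangle^{-1/4}$, which is stronger than required. The main case is $p\leq -10$. Put $a=\sqrt{-p/3}$, so $a\geq\sqrt{10/3}>1$, $a\sim\langle p\rangle^{1/2}$, $\langle p\rangle^{-1/4}\sim a^{-1/2}$, and the critical points of $f$ are $\pm a$. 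I would split $\m{R}$ into three pieces. On $\{|y|\leq a/4\}$ one has $|f'(y)|=3(a^2-y^2)\geq\tfrac{45}{16}a^2$ and $f$ is monotone, so the change-of-variables bound contributes $\lesssim a^{-2}\leq a^{-1/2}$ (here $a\geq 1$ is used). On $\{|y|\geq 2a\}$, $f$ is monotone on each half-line with $|f'(y)|=3y^2-3a^2\geq 9a^2$, again contributing $\lesssim a^{-2}\leq a^{-1/2}$. On the middle region $\{a/4\leq|y|\leq 2a\}$, which contains $\pm a$, one has $|f''(y)|=6|y|\geq\tfrac{3a}{2}$, so $\pm f$ is uniformly convex there; combining the sublevel-set estimate
\[
\big|\{y\in I:|f(y)|\leq\lambda\}\big|\leq C\,(\lambda/\mu)^{1/2}\qquad\text{whenever }f''\geq\mu>0\text{ on the interval }I
\]
with a dyadic decomposition of $\langle f\rangle$ and $\rho>1>\tfrac12$ yields a contribution $\lesssim\mu^{-1/2}\lesssim a^{-1/2}$. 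Summing the three contributions gives $\lesssim a^{-1/2}\sim\langle p\rangle^{-1/4}$, as claimed.

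The hard part is the middle region, where $f$ is genuinely flat to second order near $\pm a$: bounding $\langle f\rangle\geq 1$ loses a factor $\sim a\sim\langle p\rangle^{1/2}$, and a pointwise change of variables fails since $f'$ vanishes there. The degeneracy being exactly quadratic with $|f''|\sim a$ is precisely why the true loss is only $a^{-1/2}$, and the one technical point is to prove the displayed second-order sublevel-set estimate correctly — by bounding $|f'(y)|\geq\mu\,\mathrm{dist}\big(y,\{f'=0\}\big)$ and integrating, so that $f$ departs from its minimum $y_0$ on $I$ at least like $\tfrac{\mu}{2}(y-y_0)^2$; a crude Taylor bound using $\max_{I}|f''|$ would not cancel and would fail. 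One should also fix the thresholds ($|p|\leq 10$ versus $|p|\geq 10$) so that $a\geq 1$ throughout the last case, which is what legitimizes $a^{-2}\leq a^{-1/2}$ in the two monotone regions; all bounds above are manifestly uniform in $q$, as needed.
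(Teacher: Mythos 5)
Your proof is correct, but it takes a genuinely different route from the paper's. After the common reduction to the depressed cubic $g(y)=y^3+py+q$, the paper's proof does not localize around the critical points; instead it splits $\m{R}$ according to the single threshold $|g'(y)|\gtrless |p|^{1/4}$. On $\{|g'|\geq |p|^{1/4}\}$ it writes $dy\leq |p|^{-1/4}|g'|\,dy$ and a change of variables (on each of the at most three monotonicity intervals of $g$) gives $\int |g'|\la g\ra^{-\rho}\,dy\ls 1$; on the complementary set it simply observes that $\{|3y^2+p|\leq |p|^{1/4}\}$ has Lebesgue measure $O(|p|^{-1/4})$ and bounds the integrand by $1$. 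Your version instead places the cut at fixed multiples of the critical radius $a=\sqrt{-p/3}$, uses the change-of-variables bound away from $\pm a$ (where $|f'|\gs a^2$, giving the better decay $a^{-2}$), and invokes a second-derivative van der Corput sublevel-set estimate combined with a dyadic decomposition on the annuli around $\pm a$, where $|f''|\sim a$. Both are sound. The paper's split on $|g'|$ is slicker and avoids having to treat $p>0$ and $p<0$ separately, and the measure estimate for $\{|g'|\leq|p|^{1/4}\}$ plays the same role as your sublevel-set lemma but is checked by hand with less machinery; your decomposition makes the geometry near the two degenerate critical points completely transparent and shows explicitly where the loss $\la p\ra^{-1/4}$ really comes from. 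Your parenthetical warning that one must use the \emph{lower} bound $f''\geq\mu$ rather than a crude Taylor expansion with $\max_I|f''|$ is the right thing to keep in mind for the sublevel-set lemma in general, although in the specific application here $|f''|\sim a$ uniformly on the middle annulus so even the crude bound would suffice.
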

	
	\begin{proof}
		By the change of variable $y=x+\frac{\sigma_2}{3}$, 
		\[\int_{-\infty}^{\infty}\frac{dx}{\la x^3+\sigma_{2}x^{2}+\sigma_{1}x+\sigma_{0}\ra^{\rho}}=\int_{-\infty}^{\infty}\frac{dy}{\la y^{3}+b_{1}y+b_{0}\ra^{\rho}},\]
		where 
		\[b_{1}=\sigma_1-\frac{1}{3}\sigma_{2}^2,\quad b_{0}=\frac{2}{27}\sigma_{2}^{3}-\frac{1}{3}\sigma_1\sigma_2+\sigma_0.\]
		Thus, (\ref{int for cubic}) reduces to justify
		\be\label{int in cubic, simp}
		\int_{-\infty}^{\infty}\frac{dy}{\la y^{3}+b_{1}y+b_{0}\ra^{\rho}}\leq C\la b_1\ra^{-\frac14}\ee
		for some constant $C$ which only depends on $\rho$. If $|b_1|\ls 1$, then (\ref{int in cubic, simp}) follows from (\ref{bdd int for cubic}) in Lemma \ref{Lemma, bdd int}. If $|b_1|\gg 1$, we define $g(y)=y^{3}+b_1y+b_0$ and find $g'(y)=3y^2+b_1$.
		If $|g'(y)|\geq |b_1|^{1/4}$, then 
		\[\int\frac{dy}{\la g(y)\ra^{\rho}}\leq \int\frac{1}{|b_1|^{1/4}}\frac{|g'(y)|}{\la g(y)\ra^{\rho}}\,dy\leq C\la b_1\ra^{-\frac14}.\]
		If $|g'(y)|\leq |b_1|^{1/4}$, then the measure of the set of these $y$ values is at most $O\big(|b_{1}|^{-\frac14}\big)$, so the integral of $\la g(y)\ra^{-\rho}$ on this set is also bounded by $C\la b_1\ra^{-1/4}$.
	\end{proof}
	

	For the proof of the bilinear estimate, it is usually beneficial to transfer it to an estimate of some weighted convolution of $L^{2}$ functions as pointed out in \cite{Tao01, CKSTT03}. The next lemma is one of such an example for the general bilinear estimate whose proof is standard by using duality and Plancherel theorem. For the convenience of notation, 
	we denote $\vec{\xi}=(\xi_{1},\xi_{2},\xi_{3})$ and $\vec{\tau}=(\tau_{1},\tau_{2},\tau_{3})$ to be the vectors in $\m{R}^{3}$ and define
	\be\label{int domain}
	A:=\Big\{(\vec{\xi},\vec{\tau})\in\m{R}^{6}:\sum_{i=1}^{3}\xi_{i}=\sum_{i=1}^{3}\tau_{i}=0\Big\}.\ee 
	
	\begin{lemma}\label{Lemma, bilin to weighted l2}
		Given $s$, $b$ and $(\a_{i},\b_{i}) 1\leq i\leq 3$,  the bilinear estimate
		\[\|\p_{x}(w_{1}w_{2})\|_{X^{\a_{3},\b_{3}}_{s,b-1}}\leq C\,\|w_{1}\|_{X^{\a_{1},\b_{1}}_{s,b}}\,\|w_{2}\|_{X^{\a_{2},\b_{2}}_{s,b}}, \quad\forall\, \{w_{i}\}_{i=1,2},\]
		is equivalent to
		\be\label{weighted l2 form, nd1}
		\int\limits_{A}\frac{\xi_{3}\la\xi_{3}\ra^{s}\prod\limits_{i=1}^{3}f_{i}(\xi_{i},\tau_{i})}{\la\xi_{1}\ra^{s}\la\xi_{2}\ra^{s}\la L_{1}\ra^{b}\la L_{2}\ra^{b}\la L_{3}\ra^{1-b}} \leq C\,\prod_{i=1}^{3}\|f_{i}\|_{L^{2}_{\xi\tau}}, \quad\forall\,\{f_{i}\}_{1\leq i\leq 3},\ee
		where 
		\be\label{def of L}
		L_{i}=\tau_{i}-\phi^{\a_{i},\b_{i}}(\xi_{i}),\quad i=1,2,3.\ee
	\end{lemma}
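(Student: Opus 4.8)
The plan is to prove this by the standard duality-plus-Plancherel reduction (cf. \cite{Tao01}); the only real work is bookkeeping the weights. First I would use duality. Since the map $w\mapsto\la\xi\ra^{s}\la\tau-\phi^{\a_3,\b_3}(\xi)\ra^{b-1}\wh{w}$ is an isometry of $X^{\a_3,\b_3}_{s,b-1}$ onto $L^{2}(d\xi\,d\tau)$, the dual of $X^{\a_3,\b_3}_{s,b-1}$ with respect to the $L^{2}(dx\,dt)$ pairing is $X^{\a_3,\b_3}_{-s,1-b}$, so that
\[
\|\p_{x}(w_{1}w_{2})\|_{X^{\a_{3},\b_{3}}_{s,b-1}}
 =\sup\Big\{\,\Big|\i_{\m{R}^{2}}\p_{x}(w_{1}w_{2})\,\overline{w_{3}}\,dx\,dt\Big|\ :\ \|w_{3}\|_{X^{\a_{3},\b_{3}}_{-s,1-b}}\le 1\,\Big\}.
\]
It therefore suffices to rewrite this trilinear pairing and take the supremum.

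Next I would push the pairing to the frequency side by Parseval's identity. Using $\wh{\p_{x}(w_{1}w_{2})}(\xi,\tau)=i\xi\,(\wh{w_{1}}\ast\wh{w_{2}})(\xi,\tau)$, writing out the convolution, and relabelling the frequencies by $\xi_{3}=-\xi$, $\tau_{3}=-\tau$, $\xi_{2}=\xi-\xi_{1}$, $\tau_{2}=\tau-\tau_{1}$ — so that $(\vec{\xi},\vec{\tau})$ runs over the set $A$ of (\ref{int domain}) and $i\xi=-i\xi_{3}$ — the pairing becomes
\[
\i_{\m{R}^{2}}\p_{x}(w_{1}w_{2})\,\overline{w_{3}}\,dx\,dt
 =-i\i_{A}\xi_{3}\,\wh{w_{1}}(\xi_{1},\tau_{1})\,\wh{w_{2}}(\xi_{2},\tau_{2})\,\wh{w_{3}}(\xi_{3},\tau_{3}),
\]
where I have replaced $\overline{\wh{w_{3}}(-\xi_{3},-\tau_{3})}$ by $\wh{w_{3}}(\xi_{3},\tau_{3})$. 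This is automatic when $w_{3}$ is real, and in general one simply renames $\overline{\wh{w_{3}}(-\cdot,-\cdot)}$ as the new $\wh{w_{3}}$; since $\phi^{\a_3,\b_3}$ is odd, the reflection $(\xi,\tau)\mapsto(-\xi,-\tau)$ is an isometry of $X^{\a_3,\b_3}_{-s,1-b}$, so nothing changes.

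Then I would carry out the change of unknowns
\[
f_{i}=\la\xi_{i}\ra^{s}\la L_{i}\ra^{b}\,\wh{w_{i}}\quad(i=1,2),
\qquad
f_{3}=\la\xi_{3}\ra^{-s}\la L_{3}\ra^{1-b}\,\wh{w_{3}},
\]
with $L_{i}$ as in (\ref{def of L}); this is an isometric bijection of $X^{\a_{1},\b_{1}}_{s,b}\times X^{\a_{2},\b_{2}}_{s,b}\times X^{\a_{3},\b_{3}}_{-s,1-b}$ onto $L^{2}_{\xi\tau}\times L^{2}_{\xi\tau}\times L^{2}_{\xi\tau}$. Substituting $\wh{w_{i}}=f_{i}\,(\la\xi_{i}\ra^{s}\la L_{i}\ra^{b})^{-1}$ and $\wh{w_{3}}=\la\xi_{3}\ra^{s}f_{3}\,\la L_{3}\ra^{-(1-b)}$ into the previous display reproduces exactly the kernel $\xi_{3}\la\xi_{3}\ra^{s}\big(\la\xi_{1}\ra^{s}\la\xi_{2}\ra^{s}\la L_{1}\ra^{b}\la L_{2}\ra^{b}\la L_{3}\ra^{1-b}\big)^{-1}$ appearing in (\ref{weighted l2 form, nd1}). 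Taking the supremum over $\|f_{3}\|_{L^{2}_{\xi\tau}}\le1$ then gives the claimed equivalence; here the absence of absolute-value bars on the left of (\ref{weighted l2 form, nd1}) is harmless, since requiring the bound for all $\{f_{i}\}$ is the same as requiring it with the kernel replaced by its modulus and $f_{i}\ge0$ (one may freely multiply each $f_{i}$ by a sign).

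There is no genuine obstacle here — the statement is essentially a dictionary entry. The only points that need care are the three already flagged: correctly identifying the dual exponents (so that $b-1$ turns into $1-b$ and $s$ into $-s$), tracking the factor $i\xi=-i\xi_{3}$ produced by $\p_{x}$ together with the conjugation sitting on $\wh{w_{3}}$, and the final sign-reduction to nonnegative $f_{i}$. Finally, density of $\mathscr{S}(\m{R}^{2})$ — built into the definition (\ref{Fourier R-norm}) of the spaces $X^{\a,\b}_{s,b}$ as completions — lets one run all the Fourier manipulations for Schwartz functions first and then pass to the limit on both sides.
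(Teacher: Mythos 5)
Your proof is correct and takes exactly the route the paper itself invokes (without writing out details): duality between $X^{\a_3,\b_3}_{s,b-1}$ and $X^{\a_3,\b_3}_{-s,1-b}$, Plancherel to pass to the frequency-side trilinear form on the convolution constraint set $A$, and the isometric substitution $f_i=\la\xi_i\ra^{\pm s}\la L_i\ra^{\cdot}\wh{w_i}$. The bookkeeping of exponents, the factor $i\xi=-i\xi_3$, and the reduction to nonnegative $f_i$ are all handled correctly.
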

	
	\subsection{Resonance functions and the characteristic quadratic function}
	\label{Subsec, res fcn and char quad fcn}
	
	Based on the discussion in Section \ref{Subsec, idea of proof}, the resonance function plays an essential role in establishing bilinear estimates. Now we follow \cite{Tao01} to give a formal definition to this function in the most general form.
	
	\begin{definition}[\cite{Tao01}]\label{Def, res fcn}
		Let $\big((a_1,\b_1),(\a_2,\b_2),(\a_3,\b_3)\big)$ be a triple in $(\m{R}^{*}\times\m{R})^{3}$. Define the {\it resonance function} $H$ associated to this triple by
		\be\label{res fcn}
		H(\xi_1,\xi_2,\xi_3)=\sum_{i=1}^{3}\phi^{\a_i,\b_i}(\xi_i),\quad \forall\,\sum_{i=1}^{3}\xi_{i}=0.\ee
		The {\it resonance set} of $H$ is defined to be the zero set of $H$, that is
		\be\label{res set}
		\Big\{(\xi_1,\xi_2,\xi_3)\in\m{R}^3:\sum_{i=1}^{3}\xi_i=0,\,H(\xi_1,\xi_2,\xi_3)=0\Big\}.\ee
	\end{definition}
	
	In particular, we introduce the notations of $H_{0}$, $H_{1}$ and $H_{2}$. 
	\begin{itemize}
		\item[(1)] The resonance function associated to the triple $\big((a_1,\b_1),(\a_1,\b_1),(\a_1,\b_1)\big)$ is denoted as $H_0$:
		\be\label{H_0}
		H_{0}(\xi_1,\xi_2,\xi_3)=\sum_{i=1}^{3}\phi^{\a_1,\b_1}(\xi_{i}),\quad \forall\,\sum_{i=1}^{3}\xi_{i}=0.\ee
		This applies to the classical case (\ref{classical bilin}) or the case $r=1$ in Table \ref{Table, bilin est}. By direct calculation, 
		\[H_{0}(\xi_1,\xi_2,\xi_3)=3\a_1\xi_1\xi_2\xi_3.\]
		
		\item[(2)] The resonance function associated to the triple $\big((a_1,\b_1),(\a_1,\b_1),(\a_2,\b_2)\big)$ is  denoted as $H_1$:
		\be\label{H_1}
		H_{1}(\xi_1,\xi_2,\xi_3)=\phi^{\a_1,\b_1}(\xi_1)+\phi^{\a_1,\b_1}(\xi_2)+\phi^{\a_2,\b_2}(\xi_3),\quad \forall\,\sum_{i=1}^{3}\xi_{i}=0.\ee
		This applies to the bilinear estimate of Type (D1). By direct calculation and writing $\xi_2=-(\xi_1+\xi_3)$, 
		\be\label{exp of H_1}
		H_{1}(\xi_1,\xi_2,\xi_3)=\xi_{3}\Big[(\a_2-\a_1)\xi_3^2-3\a_1\xi_1\xi_3-3\a_1\xi_1^2\Big]+(\b_1-\b_2)\xi_3.\ee
		If $\xi_3=0$, then $H_1=0$. If $\xi_3\neq 0$, then $H_1$ can be rewritten as 
		\be\label{simp exp of H_1}
		H_{1}(\xi_1,\xi_2,\xi_3)=-3\a_1\xi_3^3h_{r}\Big(\frac{\xi_1}{\xi_3}\Big)+(\b_1-\b_2)\xi_3,\ee
		where $r=\frac{\a_2}{\a_1}$ and 
		\be\label{def of h_r}
		h_{r}(x):=x^2+x+\frac{1-r}{3}.\ee

		\item[(3)] The resonance function $H_{2}$ associated to the triple $\big((a_1,\b_1),(\a_2,\b_2),(\a_1,\b_1)\big)$ is denoted as $H_2$:
		\be\label{H_2}
		H_{2}(\xi_1,\xi_2,\xi_3)=\phi^{\a_1,\b_1}(\xi_1)+\phi^{\a_2,\b_2}(\xi_2)+\phi^{\a_1,\b_1}(\xi_3),\quad \forall\,\sum_{i=1}^{3}\xi_{i}=0.\ee
		This applies to the bilinear estimates of Type (D2), (ND1) and (ND2). By direct calculation and writing $\xi_3=-(\xi_1+\xi_2)$, 
		\be\label{exp of H_2}
		H_{2}(\xi_1,\xi_2,\xi_3)=\xi_{2}\Big[(\a_2-\a_1)\xi_2^2-3\a_1\xi_1\xi_2-3\a_1\xi_1^2\Big]+(\b_1-\b_2)\xi_2.\ee
		If $\xi_2=0$, then $H_2=0$. If $\xi_2\neq 0$, then $H_2$ can be rewritten as 
		\be\label{simp exp of H_2}
		H_{2}(\xi_1,\xi_2,\xi_3)=-3\a_1\xi_2^3h_{r}\Big(\frac{\xi_1}{\xi_2}\Big)+(\b_1-\b_2)\xi_2,\ee
		where $r=\frac{\a_2}{\a_1}$ and $h_{r}$ is as defined in (\ref{def of h_r}).
	\end{itemize}
	According to the above computation, the quadratic function $h_{r}$ is essential to determine the behavior of $H_{1}$ and $H_{2}$, thus, it is a characterization of the coupled KdV-KdV systems.
	
	\begin{definition}
		The quadratic function $h_{r}$ in (\ref{def of h_r}) is called the {\it characteristic quadratic function} associated to the coupled KdV-KdV systems (\ref{ckdv, coef form}).
	\end{definition}

	\subsection{Proof of Theorem \ref{Thm, d2, neg}}
	\label{Subsec, proof of bilin, neg}
	
	For the convenience of the proof, we introduce some notations below. For any $r\in\m{R}$, we define $h_{r}$ as in (\ref{def of h_r}) and define $p_{r}:\m{R}\to\m{R}$ by 
	\be\label{def of p_r}
	p_{r}(x)=x^2+2x+1-r.\ee
	For fixed $\a_1,\a_2,\b_1,\b_2$ and for any $\xi,\tau\in\m{R}$, define $P_{\xi,\tau}$ and $Q_{\xi,\tau}$ from $\m{R}$ to $\m{R}$ as
	\begin{eqnarray}
		P_{\xi,\tau}(x) &=& (\a_1-\a_2)x^3+3\a_1\xi x^2+(3\a_1\xi^2+\b_2-\b_1)x+\phi^{\a_1,\b_1}(\xi)-\tau  \label{def of P}, \\
		Q_{\xi,\tau}(x) &=& 3\a_1\xi x^2+3\a_1\xi^2 x+\phi^{\a_1,\b_1}(\xi)-\tau \label{def of Q},
	\end{eqnarray}
	where $\phi^{\a_1,\b_1}(\xi)=\a_1\xi^3-\b_1\xi$. In the case when $(\a_1,\b_1)=(\a_2,\b_2)$, $P_{\xi,\tau}$ reduces to $Q_{\xi,\tau}$.
	
	\bigskip
	
	\begin{proof}[Proof of Theorem \ref{Thm, d2, neg}]
		We will provide details for Case (1) and then briefly mention Case (2) and Case (3).

		\noindent {\bf Proof of Case (1).}
		
		For  $-\frac{13}{12}\leq s\leq -1$ and $\frac{1}{4}-\frac{s}{3}\leq b\leq \frac{4}{3}+\frac{2s}{3}$,  let $\rho=-s$. Then 
		\be\label{d2, neg, cond on b}
		1\leq \rho\leq \frac{13}{12}, \qquad \frac{1}{4}+\frac{\rho}{3}\leq b\leq \frac{4}{3}-\frac{2\rho}{3}.\ee
		According to Lemma \ref{Lemma, bilin to weighted l2}, it suffices to prove 
		\be\label{d2, neg, weighted l2 form}
		\int\limits_{A}\frac{|\xi_{3}|\la\xi_{1}\ra^{\rho}\la\xi_{2}\ra^{\rho}\prod\limits_{i=1}^{3}|f_{i}(\xi_{i},\tau_{i})|}{\la\xi_{3}\ra^{\rho}\la L_{1}\ra^{b}\la L_{2}\ra^{b}\la L_{3}\ra^{1-b}} \leq C\,\prod_{i=1}^{3}\|f_{i}\|_{L^{2}_{\xi\tau}}, \quad\forall\,\{f_{i}\}_{1\leq i\leq 3},\ee
		where $A$ is as defined in (\ref{int domain}) and 
		\be\label{d2, neg, L}
		L_{1}=\tau_1-\phi^{\a_1,\b_1}(\xi_1),\quad L_{2}=\tau_2-\phi^{\a_2,\b_2}(\xi_2), \quad L_{3}=\tau_3-\phi^{\a_1,\b_1}(\xi_3).\ee
		The resonance function $H_2$ is as defined in (\ref{H_2}).
		
		Since $r:=\frac{\a_2}{\a_1}<0$, the function $h_{r}$, as defined in (\ref{def of h_r}), has no real roots, so there exists $\delta_1=\delta_1(\a_1,\a_2)$ such that 
		\be\label{d2, neg, lbd for h_r}
		h_{r}(x)\geq \delta_{1}(1+x^2),\quad\forall\,x\in\m{R}.\ee
		Then according to (\ref{simp exp of H_2}), there exists $\delta_{2}=\delta_{2}(\a_1,\a_2)$ such that 
		\[|H_2(\xi_1,\xi_2,\xi_3)|\geq \delta_{2}|\xi_2|(\xi_1^2+\xi_2^2)-|\b_1-\b_2||\xi_2|.\]
		Now if $|\b_2-\b_1|\leq \eps_1$ with sufficiently small $\eps_1$ depending only on $\delta_2$, 
		\[\la H_2(\xi_1,\xi_2,\xi_3)\ra\geq \frac{\delta_2}{2}|\xi_2|(\xi_1^2+\xi_2^2).\]
		Since $\sum\limits_{i=1}^{3}\xi_i=0$, the above estimate implies that 
		\be\label{d2, neg, lbd for H_2}\la H_2(\xi_1,\xi_2,\xi_3)\ra\gs |\xi_2|\sum_{i=1}^{3}\xi_i^2.\ee
		Define $\text{MAX}=\max\{\la L_{1}\ra, \la L_2\ra, \la L_3\ra\}$. Then it follows from $H_{2}=-\sum\limits_{i=1}^{3}L_{i}$ that $\text{MAX}\geq \frac{1}{3}\la H_2\ra$. Therefore, 
		
		\be\label{d2, neg, lbd for MAX}
		\text{MAX}\gs |\xi_2|\sum_{i=1}^{3}\xi_i^2.\ee
		Decompose the region  $A$  as $\bigcup\limits_{i=0}^{3}A_{i}$, where
		\be\label{d2, neg, region division}
		\begin{split}
			&A_{0}=\{(\vec{\xi},\vec{\tau})\in A:|\xi_{1}|\leq 1\,\,\text{or}\,\,|\xi_{2}|\leq 1\}, \vspace{0.05in}\\
			&A_{i}=\{(\vec{\xi},\vec{\tau})\in A:|\xi_{1}|> 1,\,|\xi_{2}|>1\,\,\text{and}\,\,\la L_{i}\ra=\text{MAX}\},\quad 1\leq i\leq 3.
		\end{split}\ee
		
		\noindent
		{\bf Contribution on $A_{0}$}: 
		
		\quad Since $\la\xi_1\ra\la\xi_2\ra\ls \la\xi_3\ra$ when $|\xi_1|\leq 1$ or $|\xi_2|\leq 1$,
		\begin{eqnarray}
			\int\frac{|\xi_{3}|\la\xi_{1}\ra^{\rho}\la\xi_{2}\ra^{\rho}\prod\limits_{i=1}^{3}|f_{i}(\xi_{i},\tau_{i})|}{\la\xi_{3}\ra^{\rho}\la L_{1}\ra^{b}\la L_{2}\ra^{b}\la L_{3}\ra^{1-b}} &\ls &
			\int\frac{|\xi_{3}|\prod\limits_{i=1}^{3}|f_{i}(\xi_{i},\tau_{i})|}{\la L_{1}\ra^{b}\la L_{2}\ra^{b}\la L_{3}\ra^{1-b}} \nonumber\\
			&=&
			\iint\frac{|\xi_{3}\|f_{3}|}{\la L_{3}\ra^{1-b}}\bigg(\iint\frac{|f_{1}f_{2}|}{\la L_{1}\ra^{b}\la L_{2}\ra^{b}}\,d\tau_{2}d\xi_{2}\bigg)\,d\tau_{3}\,d\xi_{3}. \label{d2, neg, A0, region simp}
		\end{eqnarray}
		In order to bound the above integral by $C\prod\limits_{i=1}^{3}\|f_{i}\|_{L^{2}_{\xi\tau}}$, it suffices to show
		\be\label{d2, neg, A0, CS}
		\sup_{\xi_{3},\tau_{3}}\frac{|\xi_{3}|}{\la L_{3}\ra^{1-b}}\bigg(\iint\frac{d\tau_{2}d\xi_{2}}{\la L_{1}\ra^{2b}\la L_{2}\ra^{2b}}\bigg)^{\frac{1}{2}}\leq C\ee
		due to  the same argument as in \cite{KPV96} via the Cauchy-Schwartz inequality. Next, for any fixed $\xi_3$ and $\tau_{3}$, we will estimate 
		\[\iint\frac{d\tau_{2}d\xi_{2}}{\la L_{1}\ra^{2b}\la L_{2}\ra^{2b}}.\] 
		Since $\tau_{1}=-\tau_2-\tau_3$ and $\xi_{1}=-\xi_2-\xi_3$, $L_{1}$ can be written as 
		\[L_{1}=-\tau_2-\tau_3-\phi^{\a_1,\b_1}(-\xi_2-\xi_3).\]
		Meanwhile, recalling $L_{2}=\tau_2-\phi^{\a_2,\b_2}(\xi_2)$, it then follows from Lemma \ref{Lemma, int in tau} that 
		\be\label{use of lemma: int in tau}
		\int\frac{d\tau_{2}}{\la L_{1}\ra^{2b}\la L_{2}\ra^{2b}}\ls \frac{1}{\la L_{1}+L_{2}\ra^{2b}}. \ee
		So (\ref{d2, neg, A0, CS}) is reduced to 
		\[\sup_{\xi_{3},\tau_{3}}\frac{|\xi_{3}|}{\la L_{3}\ra^{1-b}}\bigg(\int\frac{d\xi_{2}}{\la L_{1}+L_{2}\ra^{2b}}\bigg)^{\frac{1}{2}}\leq C,\]
		or equivalently, 
		\be\label{d2, neg, A0, L1+L2}
		\sup_{\xi_{3},\tau_{3}}\frac{|\xi_{3}|^2}{\la L_{3}\ra^{2(1-b)}}\int\frac{d\xi_{2}}{\la L_{1}+L_{2}\ra^{2b}}\leq C.\ee
		By direct calculation, we find
		\be\label{L1+L2, H2}
		L_{1}+L_{2}=P_{\xi_3,\tau_3}(\xi_2),\ee
		where $P_{\xi_{3},\tau_{3}}$ is as defined in (\ref{def of P}) with $(\xi,\tau)$ being replaced by $(\xi_3,\tau_3)$. Hence, (\ref{d2, neg, A0, L1+L2}) is further reduced to 
		\be\label{d2, neg, A0, single int}
		\sup_{\xi_{3},\tau_{3}}\frac{|\xi_{3}|^2}{\la L_{3}\ra^{2(1-b)}}\int \frac{d\xi_2}{\la P_{\xi_3,\tau_3}(\xi_2)\ra^{2b}}\leq C.\ee
		There are two situations.
		\begin{itemize}
			\item $|\xi_{3}|\leq 1$. In this situation, it suffices to prove $\int \frac{d\xi_2}{\la P_{\xi_3,\tau_3}(\xi_2)\ra^{2b}}$ is bounded. Since $P_{\xi_{3},\tau_{3}}(\xi_2)$ is a cubic function in $\xi_2$, the boundedness of this integral follows from Lemma \ref{Lemma, bdd int}.
			
			\item $|\xi_{3}|\geq 1$. In this situation, 
			\be\label{H2, deri of L1+L2}
			P_{\xi_3,\tau_3}'(\xi_2)=3(\a_1-\a_2)\xi_2^2+6\a_1\xi_3\xi_2+3\a_1\xi_3^2+\b_2-\b_1.\ee
			When $\xi_2\neq 0$, 
			\be\label{H2, deri of L1+L2, simp}
			P_{\xi_3,\tau_3}'(\xi_2)=3\a_1\xi_2^2p_{r}\Big(\frac{\xi_3}{\xi_2}\Big)+\b_2-\b_1,\ee
			where $p_r$ is as defined in (\ref{def of p_r}). Since $r<0$, $p_{r}$ does not have any real roots. Therefore, there exists $\delta_{3}=\delta_{3}(\a_1,\a_2)$ such that 
			\[p_{r}(x)\geq \delta_{3}(1+x^2), \quad\forall\,x\in\m{R}.\]
			As a result, there exists $\delta_{4}=\delta_{4}(\a_1,\a_2)$ such that 
			\[|P_{\xi_3,\tau_3}'(\xi_2)|\geq \delta_{4}(\xi_2^2+\xi_3^2)-|\b_2-\b_1|.\]
			Since $|\xi_3|\geq 1$, when $|\b_2-\b_1|$ is sufficiently small, 
			\be\label{d2, neg, lbd for deri of L1+L2}
			|P_{\xi_3,\tau_3}'(\xi_2)|\gs \xi_2^2+\xi_3^2.\ee
			
			Hence, 
			\[\int \frac{d\xi_2}{\la P_{\xi_3,\tau_3}(\xi_2)\ra^{2b}}\ls \frac{1}{|\xi_{3}|^{2}}\i_{\m{R}}\frac{|P_{\xi_3,\tau_3}'(\xi_2)|}{\la P_{\xi_3,\tau_3}(\xi_2)\ra^{2b}}\,d\xi_{2}\ls \frac{1}{|\xi_{3}|^{2}},\]
			which also justifies (\ref{d2, neg, A0, single int}).
		\end{itemize}
		
		\smallskip
		\noindent {\bf Contribution on $A_{3}$}: 
		
		\quad Since  $\la\xi_{i}\ra\sim |\xi_{i}|$  when $|\xi_{i}|>1$ for $i=1,2$, 
		\be\label{d2, neg, A3, region simp}
		\int\frac{|\xi_{3}|\la\xi_{1}\ra^{\rho}\la\xi_{2}\ra^{\rho}\prod\limits_{i=1}^{3}|f_{i}(\xi_{i},\tau_{i})|}{\la\xi_{3}\ra^{\rho}\la L_{1}\ra^{b}\la L_{2}\ra^{b}\la L_{3}\ra^{1-b}} \ls 
		\iint\frac{|\xi_{3}\|f_{3}|}{\la\xi_{3}\ra^{\rho}\la L_{3}\ra^{1-b}}\bigg(\iint\frac{|\xi_{1}\xi_{2}|^{\rho}|f_{1}f_{2}|}{\la L_{1}\ra^{b}\la L_{2}\ra^{b}}\,d\tau_{2}d\xi_{2}\bigg)\,d\tau_{3}\,d\xi_{3}. \ee
		In order to bound the above integral by $C\prod\limits_{i=1}^{3}\|f_{i}\|_{L^{2}_{\xi\tau}}$, similar to the derivation from (\ref{d2, neg, A0, region simp}) to (\ref{d2, neg, A0, single int}), it suffices to show 
		\be\label{d2, neg, A3, single int}
		\sup_{\xi_{3},\tau_{3}}\frac{|\xi_{3}|^2}{\la\xi_{3}\ra ^{2\rho}\la L_{3}\ra ^{2(1-b)}}\int\frac{|\xi_{1}\xi_{2}|^{2\rho}}{\la P_{\xi_3,\tau_3}(\xi_{2})\ra ^{2b}}\,d\xi_{2}\leq C,\ee
		where $P_{\xi_3,\tau_3}(\xi_{2})$ is the same as (\ref{L1+L2, H2}). Then by analogous derivation from (\ref{H2, deri of L1+L2}) to (\ref{d2, neg, lbd for deri of L1+L2}), it also holds $|P_{\xi_3,\tau_3}'(\xi_2)|\gs \xi_2^2+\xi_3^2$. Moreover, since $\sum\limits_{i=1}^{3}\xi_i=0$, 
		\be\label{d2, neg, A3, lbd for deri of L1+L2}
		|P_{\xi_3,\tau_3}'(\xi_2)|\gs \sum_{i=1}^{3}\xi_{i}^{2}\gs |\xi_1\xi_2|.\ee
		As a result, (\ref{d2, neg, A3, single int}) is reduced to
		\[\sup_{\xi_{3},\tau_{3}}\int\frac{|\xi_{3}|^{2}|\xi_{1}\xi_{2}|^{2\rho-1}}{\la\xi_{3}\ra^{2\rho}\la L_{3}\ra ^{2(1-b)}}\frac{|P_{\xi_3,\tau_3}'(\xi_2)|}{\la |P_{\xi_3,\tau_3}(\xi_2)|\ra ^{2b}}\,d\xi_{2}\leq C.\]
		Since $\la L_{3}\ra=\text{MAX}$ on $A_{3}$ and 
		\[\int\frac{|P_{\xi_3,\tau_3}'(\xi_2)|}{\la |P_{\xi_3,\tau_3}(\xi_2)|\ra ^{2b}}\,d\xi_{2}<\infty,\]
		it suffices to show
		\be\label{d2, neg, A3, final est}
		\frac{|\xi_{3}|^{2}|\xi_{1}\xi_{2}|^{2\rho-1}}{\la\xi_{3}\ra^{2\rho}(\text{MAX})^{2(1-b)}}\leq C.\ee
		To this end,  note  that  $|\xi_{3}|\leq \la\xi_{3}\ra^{\rho}$ as $\rho\geq 1$. Moreover,  it follows from (\ref{d2, neg, lbd for MAX}) that 
		$\la \text{MAX}\ra\gs  |\xi_{1}\xi_{2}|^{\frac{3}{2}}$. Consequently,  
		\begin{eqnarray*}
			\frac{|\xi_{3}|^{2}|\xi_{1}\xi_{2}|^{2\rho-1}}{\la\xi_{3}\ra^{2\rho}(\text{MAX})^{2(1-b)}} &\ls & \frac{|\xi_{1}\xi_{2}|^{2\rho-1}}{|\xi_{1}\xi_{2}|^{3(1-b)}}=|\xi_1\xi_2|^{3b+2\rho-4}.
		\end{eqnarray*}
		Noticing the restriction (\ref{d2, neg, cond on b}) implies $3b+2\rho-4\leq 0$, so $|\xi_1\xi_2|^{3b+2\rho-4}\leq 1$.
		
		\smallskip
		\noindent {\bf Contribution on $A_{1}$}: 
		
		\quad Since $\la L_{1}\ra=\text{MAX}$ on $A_{1}$, 
		\[\frac{1}{\la L_{1}\ra^{b\la }L_{3}\ra^{1-b}}\leq \frac{1}{\la L_{1}\ra^{1-b}\la L_{3}\ra^{b}}.\]
		Therefore,
		\begin{eqnarray*}
			\int\frac{|\xi_{3}|\la\xi_{1}\ra^{\rho}\la\xi_{2}\ra^{\rho}\prod\limits_{i=1}^{3}|f_{i}(\xi_{i},\tau_{i})|}{\la\xi_{3}\ra^{\rho}\la L_{1}\ra^{b}\la L_{2}\ra^{b}\la L_{3}\ra^{1-b}} &\ls &
			\int\frac{|\xi_{3}\|\xi_{1}\xi_{2}|^{\rho}\prod\limits_{i=1}^{3}|f_{i}(\xi_{i},\tau_{i})|}{\la \xi_{3}\ra^{\rho}\la L_{1}\ra^{1-b}\la L_{2}\ra^{b}\la L_{3}\ra^{b}}\\
			&=&
			\iint\frac{|f_{1}\|\xi_{1}|^{\rho}}{\la L_{1}\ra^{1-b}}\bigg(\iint\frac{|\xi_{2}|^{\rho}|\xi_{3}||f_{2}f_{3}|}{\la\xi_{3}\ra^{\rho}\la L_{2}\ra^{b}\la L_{3}\ra^{b}}\,d\tau_{2}d\xi_{2}\bigg)\,d\tau_{1}\,d\xi_{1}.
		\end{eqnarray*}
		Then similar to the derivation from (\ref{d2, neg, A0, region simp}) to (\ref{d2, neg, A0, L1+L2}), it suffices to show 
		\be\label{d2, neg, A1, L2+L3}
		\sup_{\xi_1,\tau_1}\frac{|\xi_{1}|^{2\rho}}{\la L_{1}\ra^{2(1-b)}}\int\frac{|\xi_{2}|^{2\rho}|\xi_{3}|^{2}}{\la\xi_{3}\ra^{2\rho}\la L_2+L_3\ra^{2b}}\,d\xi_{2}\leq C.\ee
		For any fixed $(\xi_1,\tau_1)$, writing $\tau_{3}=-\tau_2-\tau_{1}$ and $\xi_3=-\xi_2-\xi_1$, then by direct calculation, we find
		\be\label{L2+L3, H2}
		L_{2}+L_{3}=P_{\xi_1,\tau_1}(\xi_2),\ee
		where $P_{\xi_{1},\tau_{1}}$ is as defined in (\ref{def of P}) with $(\xi,\tau)$ being replaced by $(\xi_1,\tau_1)$. Hence, (\ref{d2, neg, A1, L2+L3}) is further reduced to 
		\be\label{d2, neg, A1, single int}
		\sup_{\xi_1,\tau_1}\frac{|\xi_{1}|^{2\rho}}{\la L_{1}\ra^{2(1-b)}}\int\frac{|\xi_{2}|^{2\rho}|\xi_{3}|^{2}}{\la\xi_{3}\ra^{2\rho}\la P_{\xi_1,\tau_1}(\xi_2)\ra^{2b}}\,d\xi_{2}\leq C.\ee
		Then by analogous derivation from (\ref{H2, deri of L1+L2}) to (\ref{d2, neg, lbd for deri of L1+L2}), for sufficiently small $|\b_2-\b_1|$, we have 
		\[|P_{\xi_1,\tau_1}'(\xi_2)|\gs \xi_1^2+\xi_2^2\geq |\xi_1\xi_2|.\]
		Based on this estimate, the rest argument is similar to that for the region $A_3$ after (\ref{d2, neg, A3, lbd for deri of L1+L2}). 
		
		\smallskip
		\noindent {\bf Contribution on $A_{2}$}: First, we  decompose $A_{2}$ into  three parts: $A_{2}=\bigcup\limits_{i=1}^{3}A_{2i}$ with 
		\be\label{d2, neg, decom of A2}
		\left\{\begin{array}{l}
			A_{21}=\{(\vec{\xi},\vec{\tau})\in A_{2}: |\xi_{1}|< \frac{1}{3}|\xi_{2}|\},\vspace{0.05in}\\
			A_{22}=\{(\vec{\xi},\vec{\tau})\in A_{2}: \frac{1}{3}|\xi_{2}|\leq |\xi_{1}|\leq \frac{2}{3}|\xi_{2}|\}, \vspace{0.05in}\\
			A_{23}=\{(\vec{\xi},\vec{\tau})\in A_{2}: |\xi_{1}|> \frac{2}{3}|\xi_{2}|\}.
		\end{array}\right.\ee
		\begin{itemize}
			\item On $A_{21}$ or $A_{23}$, since $\la L_{2}\ra=\text{MAX}$,
			\[\frac{1}{\la L_{2}\ra^{b\la }L_{3}\ra^{1-b}}\leq \frac{1}{\la L_{2}\ra^{1-b}\la L_{3}\ra^{b}}.\]
			Thus,
			\begin{eqnarray*}
				\int\frac{|\xi_{3}|\la\xi_{1}\ra^{\rho}\la\xi_{2}\ra^{\rho}\prod\limits_{i=1}^{3}|f_{i}(\xi_{i},\tau_{i})|}{\la\xi_{3}\ra^{\rho}\la L_{1}\ra^{b}\la L_{2}\ra^{b}\la L_{3}\ra^{1-b}} &\ls &
				\int\frac{|\xi_{3}\|\xi_{1}\xi_{2}|^{\rho}\prod\limits_{i=1}^{3}|f_{i}(\xi_{i},\tau_{i})|}{\la \xi_{3}\ra^{\rho}\la L_{1}\ra^{b}\la L_{2}\ra^{1-b}\la L_{3}\ra^{b}}\\
				&=&
				\iint\frac{|f_{2}\|\xi_{2}|^{\rho}}{\la L_{2}\ra^{1-b}}\bigg(\iint\frac{|\xi_{1}|^{\rho}|\xi_{3}||f_{1}f_{3}|}{\la\xi_{3}\ra^{\rho}\la L_{1}\ra^{b}\la L_{3}\ra^{b}}\,d\tau_{1}d\xi_{1}\bigg)\,d\tau_{2}\,d\xi_{2}.
			\end{eqnarray*}
			Then similar to the derivation from (\ref{d2, neg, A0, region simp}) to (\ref{d2, neg, A0, L1+L2}), it suffices to show 
			\be\label{d2, neg, A2, L1+L3}
			\sup_{\xi_2,\tau_2}\frac{|\xi_{2}|^{2\rho}}{\la L_{2}\ra^{2(1-b)}}\int\frac{|\xi_{1}|^{2\rho}|\xi_{3}|^{2}}{\la\xi_{3}\ra^{2\rho}\la L_1+L_3\ra^{2b}}\,d\xi_{1}\leq C.\ee
			For any fixed $(\xi_2,\tau_2)$, writing $\tau_{3}=-\tau_1-\tau_{2}$ and $\xi_3=-\xi_1-\xi_2$, then by direct calculation, we find
			\be\label{L1+L3, H2}
			L_{1}+L_{3}=Q_{\xi_2,\tau_2}(\xi_1),\ee
			where $Q_{\xi_{2},\tau_{2}}$ is as defined in (\ref{def of Q}) with $(\xi,\tau)$ being replaced by $(\xi_2,\tau_2)$. Hence, (\ref{d2, neg, A2, L1+L3}) is further reduced to 
			\be\label{d2, neg, A2, single int}
			\sup_{\xi_2,\tau_2}\frac{|\xi_{2}|^{2\rho}}{\la L_{2}\ra^{2(1-b)}}\int\frac{|\xi_{1}|^{2\rho}|\xi_{3}|^{2}}{\la\xi_{3}\ra^{2\rho}\la Q_{\xi_2,\tau_2}(\xi_1) \ra^{2b}}\,d\xi_{1}\leq C.\ee
			Again by direct calculation, 
			\be\label{d2, neg, deri of L1+L3} 
			Q_{\xi_2,\tau_2}'(\xi_1)=6\a_1\xi_2\xi_1+3\a_1\xi_2^2
			=3\a_1\xi_2(2\xi_1+\xi_2).\ee
			According to the definition of $A_{21}$ and $A_{23}$ in (\ref{d2, neg, decom of A2}), either $|\xi_1|<\frac13|\xi_2|$ or $|\xi_1|>\frac23|\xi_2|$, so it follows from (\ref{d2, neg, deri of L1+L3}) that 
			\[|Q_{\xi_2,\tau_2}'(\xi_1)|\gs |\xi_1\xi_2|.\]
			Based on this estimate, the rest argument is similar to that for the region $A_3$ after (\ref{d2, neg, A3, lbd for deri of L1+L2}). 
			
			\item  On $A_{22}$,  we have  $|\xi_{1}|\sim |\xi_{2}|\sim |\xi_{3}|$, so
			\begin{eqnarray*}
				\int\frac{|\xi_{3}|\la\xi_{1}\ra^{\rho}\la\xi_{2}\ra^{\rho}\prod\limits_{i=1}^{3}|f_{i}(\xi_{i},\tau_{i})|}{\la\xi_{3}\ra^{\rho}\la L_{1}\ra^{b}\la L_{2}\ra^{b}\la L_{3}\ra^{1-b}} &\ls &
				\int\frac{|\xi_{2}|^{1+\rho}\prod\limits_{i=1}^{3}|f_{i}(\xi_{i},\tau_{i})|}{\la L_{1}\ra^{b}\la L_{2}\ra^{b}\la L_{3}\ra^{1-b}}\\
				&=&
				\iint\frac{|f_{2}\|\xi_{2}|^{1+\rho}}{\la L_{2}\ra^{b}}\bigg(\iint\frac{|f_{1}f_{3}|}{\la L_{1}\ra^{b}\la L_{3}\ra^{1-b}}\,d\tau_{1}d\xi_{1}\bigg)\,d\tau_{2}\,d\xi_{2}.
			\end{eqnarray*}
			Then similar to the derivation from (\ref{d2, neg, A0, region simp}) to (\ref{d2, neg, A0, L1+L2}), it suffices to show 
			\[\sup_{\xi_2,\tau_2}\frac{|\xi_{2}|^{2(1+\rho)}}{\la L_{2}\ra^{2b}}\int\frac{d\xi_{1}}{\la L_1+L_3\ra^{2(1-b)}}\leq C.\]
			That is to prove 
			\be\label{d2, neg, A22, L1+L3}
			\sup_{\xi_2,\tau_2}\frac{|\xi_{2}|^{2(1+\rho)}}{\la L_{2}\ra^{2b}}\int\frac{d\xi_{1}}{\la Q_{\xi_2,\tau_2}(\xi_{1})\ra^{2(1-b)}}\leq C,\ee
			where $Q_{\xi_2,\tau_2}(\xi_{1})=L_1+L_3$ is as defined in (\ref{L1+L3, H2}). By (\ref{L1+L3, H2}) and (\ref{def of Q}), 
			\[Q_{\xi_2,\tau_2}(\xi_{1})
			=3\a_1\xi_2\xi_1^2+3\a_1\xi_2^2\xi_1+\phi^{\a_1,\b_1}(\xi_2)-\tau_2.\]
			In other words, $Q_{\xi_2,\tau_2}(\xi_{1})$ is a quadratic function in $\xi_1$ with the leading coefficient $3\a_1\xi_2$. Since (\ref{d2, neg, cond on b}) implies that $2(1-b)>\frac12$, then it follows from Lemma \ref{Lemma, bdd int} that 
			\[\int\frac{d\xi_{1}}{\la Q_{\xi_2,\tau_2}(\xi_{1})\ra^{2(1-b)}}\ls |\xi_2|^{-\frac12}.\]
			Therefore, (\ref{d2, neg, A22, L1+L3}) reduces to 
			\be\label{d2, neg, A22, final est}
			\sup_{\xi_2,\tau_2}\frac{|\xi_{2}|^{\frac{3}{2}+2\rho}}{\la L_{2}\ra^{2b}}\leq C.\ee
			
			Since $\la L_{2}\ra=\text{MAX}$ on $A_{22}$, it follows from (\ref{d2, neg, lbd for MAX}) that $\la L_{2}\ra\gs |\xi_{2}|^{3}$. Hence, 
			\[\frac{|\xi_{2}|^{\frac{3}{2}+2\rho}}{\la L_{2}\ra^{2b}}\ls |\xi_{2}|^{\frac{3}{2}+2\rho-6b}.\] 
			Finally, due to the restriction $b\geq \frac14+\frac{\rho}{3}$ in (\ref{d2, neg, cond on b}), we have   $\frac{3}{2}+2\rho-6b\leq 0$ and $|\xi_{2}|^{\frac{3}{2}+2\rho-6b}\leq 1$.
		\end{itemize}
		
		\bigskip
		
		\noindent {\bf Proof of Case (2).}
		
		Let $\rho=-s$. By the assumption in this case, 
		\be\label{d2, neg 2, cond on b}
		\frac{3}{4}<\rho<1, \qquad \frac{1}{4}+\frac{\rho}{3}\leq b\leq 1-\frac{\rho}{3}.\ee
		As  in the proof for Case (1), we first  decompose  $A=\bigcup\limits_{i=0}^{3}A_{i}$ as in (\ref{d2, neg, region division}). 
		\begin{itemize}
			\item On $A_{0}$, the proof is the same as that for Case (1).
			\item  On $A_{3}$, it again reduces to prove (\ref{d2, neg, A3, final est}). Since $\rho<1$, it suffices to show
			\be\label{d2, neg 2, A3, final est}
			\frac{|\xi_{3}|^{2-2\rho}|\xi_{1}\xi_{2}|^{2\rho-1}}{(\text{MAX})^{2(1-b)}}\leq C.\ee 
			By  (\ref{d2, neg, lbd for MAX}),
			\[\text{MAX}\gs \max\{|\xi_{2}\xi_{3}^{2}|,\,|\xi_{1}^{2}\xi_{2}|,\,|\xi_{2}|^{3}\}\geq 1.\]
			Then it follows from $\frac{3}{4}<\rho<1$ that 
			\[|\xi_{3}|^{2-2\rho}|\xi_{1}\xi_{2}|^{2\rho-1}=|\xi_{2}\xi_{3}^{2}|^{1-\rho}|\xi_{1}^{2}\xi_{2}|^{\rho-\frac{1}{2}}|\xi_{2}^{3}|^{\frac{4\rho-3}{6}}\ls (\text{MAX})^{2\rho/3}.\]
			Finally, (\ref{d2, neg 2, A3, final est}) holds since (\ref{d2, neg 2, cond on b}) implies $\frac{2\rho}{3}\leq 2(1-b)$.
			
			\item On $A_{1}$, similarly, it reduces to prove (\ref{d2, neg 2, A3, final est}) which can be justified exactly the same as above.
			
			\item On $A_{2}$, we also decompose $A_{2}$ as (\ref{d2, neg, decom of A2}). The arguments on $A_{21}$ and $A_{23}$ are again reduced to prove (\ref{d2, neg 2, A3, final est}). The argument on $A_{22}$ is the same as that for Case (1) thanks to the condition $b\geq \frac14+\frac{\rho}{3}$ in (\ref{d2, neg 2, cond on b}).
		\end{itemize}
		
		\bigskip
		
		\noindent {\bf Proof of Case (3).}
		
		Since $\la\xi_1\ra\la\xi_2\ra\geq \la\xi_3\ra$, the left hand side of (\ref{d2, neg, weighted l2 form}) is an increasing function in $\rho$. So it suffices to consider the case when $s=-\frac{3}{4}$. Then it can be justified in the same way as that for Case (2).
		
	\end{proof}
	
	\subsection{Proof of Theorem \ref{Thm, bilin est, general}}
	\label{Subsec, proof of bilin, general}
	
	First, we want to point out several cases in Table \ref{Table, bilin est} which have been known or can be proved similarly.
	\begin{itemize}
		\item When $r=1$, Type (D1) and (D2) with $s>-\frac34$ were established in \cite{KPV96}. 
		
		\item When $r>\frac14$ but $r\neq 1$, Type (D1) and (D2) with $s\geq 0$ have been justified in \cite{Oh09a}. 
		The situations for Type (ND1) and (ND2) can be treated similarly.
		
		\item When $r=-1$, Type (D1) was proved in \cite{AC08}.
	\end{itemize}
	In all of the above results, it is assumed that $\b_1=\b_2=0$. But as we have seen from the proof of Theorem \ref{Thm, d2, neg}, even if $\b_1$ or $\b_2$ is not equal to $0$, they will not affect the conclusion as long as $|\b_2-\b_1|$ is small. 
	
	For the rest cases in Table \ref{Table, bilin est}, we will only provide proofs for the following typical ones.
	\begin{enumerate}[(1)]
		\item Among the cases when $r<0$ or $0<r<\frac14$, we will only prove Type (ND1) with $0<r<\frac14$, see Section \ref{Subsubsec, ND1, 0<r<1/4}. There are two reasons. Firstly, the cases when $0<r<\frac14$ is generally more difficult than the cases when $r<0$. Secondly, the non-divergence cases is more challenging than the divergence cases.
		
		\item When $r=\frac14$, the justifications for all four types are analogous, so we will still only focus on Type (ND1), see Section \ref{Subsubsec, ND1, r=1/4}.

		\item When $r=1$, Type (D1) and (D2) have been known and Type (ND1) and (ND2) are similar, so we will again only deal with Type (ND1), see Section \ref{Subsubsec, ND1, r=1}.
	\end{enumerate}
	
	As discussed above, only Type (ND1) will be investigated, so we list some common notations which will be used in Sections \ref{Subsubsec, ND1, 0<r<1/4}--\ref{Subsubsec, ND1, r=1}. First, we define the set $A$ as (\ref{int domain}), that is
	\[A:=\Big\{(\vec{\xi},\vec{\tau})\in\m{R}^{6}:\sum_{i=1}^{3}\xi_{i}=\sum_{i=1}^{3}\tau_{i}=0\Big\}.\]
	Then for any $(\vec{\xi},\vec{\tau})\in A$, we denote
	\[L_{1}=\tau_1-\phi^{\a_1,\b_1}(\xi_1),\quad L_{2}=\tau_2-\phi^{\a_2,\b_2}(\xi_2), \quad L_{3}=\tau_3-\phi^{\a_1,\b_1}(\xi_3).\]
	The resonance function is $H_{2}$ as defined in (\ref{H_2}). That is 
	\[H_{2}(\xi_1,\xi_2,\xi_3)=\phi^{\a_1,\b_1}(\xi_1)+\phi^{\a_2,\b_2}(\xi_2)+\phi^{\a_1,\b_1}(\xi_3)=-\sum_{i=1}^{3}L_{i}.\]
	In addition, we write $\text{MAX}=\max\{\la L_1\ra, \la L_2\ra, \la L_3\ra\}$. It is obvious that $\text{MAX}\gs |H_{2}(\xi_1,\xi_2,\xi_3)|$. Finally, we denote the functions $h_r$, $p_{r}$, $P_{\xi,\tau}$ and $Q_{\xi,\tau}$ as in (\ref{def of h_r}), (\ref{def of p_r}), (\ref{def of P}) and (\ref{def of Q}) respectively.
	
	\subsubsection{Type (ND1) with $0<r<\frac14$ and $s>-\frac34$}
	\label{Subsubsec, ND1, 0<r<1/4}
	
	Let  $\rho=-s$. Then $\rho<\frac34$. Similar to the argument as in the proof of Lemma \ref{Lemma, bilin to weighted l2}, one only needs  to show
	\be\label{nd1, 0<r<1/4, weighted l2 form}
	\int\limits_{A}\,\frac{|\xi_{1}|\la\xi_{1}\ra^{\rho}\la\xi_{2}\ra^{\rho}\prod\limits_{i=1}^{3}|f_{i}(\xi_{i},\tau_{i})|}{\la\xi_{3}\ra^{\rho}\la L_{1}\ra^{b}\la L_{2}\ra^{b}\la L_{3}\ra^{1-b}} \leq C\,\prod_{i=1}^{3}\|f_{i}\|_{L^{2}_{\xi\tau}}, \quad\forall\,\{f_{i}\}_{1\leq i\leq 3}.\ee
	Since $\frac{\la\xi_{1}\ra\la\xi_{2}\ra}{\la\xi_{3}\ra}\geq 1$, it suffices to consider the case when $\frac{9}{16}\leq \rho<\frac{3}{4}$. Assume 
	\be\label{nd1, 0<r<1/4, cond on b}
	\frac{1}{2}<b\leq \frac{3}{4}-\frac{\rho}{3}:=b_0.\ee
	Since $0<r<\frac{1}{4}$, the function $h_{r}$ has no real roots. Then by the similar argument from (\ref{d2, neg, lbd for h_r}) to (\ref{d2, neg, lbd for MAX}) in Section \ref{Subsec, proof of bilin, neg}, there exist $\eps$ and $\delta$, which only depend on $\a_1$ and $\a_2$, such that whenever $|\b_2-\b_1|\leq \eps$, it holds
	\be\label{nd1, 0<r<1/4, lbd for MAX}
	\text{MAX}\geq \delta |\xi_2|\sum_{i=1}^{3}\xi_i^2.\ee
	Decompose  the region $A=\bigcup\limits_{i=0}^{3}A_{i}$ as in (\ref{d2, neg, region division}), that is
	\be\label{region division, nd1, 0<r<1/4}
	\begin{split}
		&A_{0}=\{(\vec{\xi},\vec{\tau})\in A:|\xi_{1}|\leq 1\,\,\text{or}\,\,|\xi_{2}|\leq 1\}, \\
		&A_{i}=\{(\vec{\xi},\vec{\tau})\in A:|\xi_{1}|> 1,\,|\xi_{2}|>1\,\,\text{and}\,\,\la L_{i}\ra=\text{MAX}\},\quad 1\leq i\leq 3.
	\end{split}\ee
	Among the above regions $ \{A_i\}_{i=0}^{3} $, the most challenging region is $ A_3 $, so we will only show how we estimate on this region next.
	
	\noindent {\bf Contribution on $A_{3}$}: 
	
	Similar to the derivation for (\ref{d2, neg, A3, single int}), it suffices to show 
	\be\label{nd1, 0<r<1/4, A3, single int}
	\sup_{\xi_{3},\tau_{3}}\frac{1}{\la\xi_{3}\ra ^{2\rho}\la L_{3}\ra ^{2(1-b)}}\int\frac{|\xi_{1}|^{2(1+\rho)}|\xi_{2}|^{2\rho}}{\la P_{\xi_3,\tau_3}(\xi_2)\ra ^{2b}}\,d\xi_{2}\leq C.\ee
	Since $\xi_{2}\neq 0$, then it follows from (\ref{H2, deri of L1+L2, simp}) that
	\[P_{\xi_3,\tau_3}'(\xi_2)=3\a_1\xi_2^2p_{r}\Big(\frac{\xi_3}{\xi_2}\Big)+\b_2-\b_1,\]
	where $p_r$ is as defined in (\ref{def of p_r}). Since $0<r<\frac14$, $p_{r}$ has two roots $x_{1r}=-1-\sqrt{r}$ and $x_{2r}=-1+\sqrt{r}$ which satisfy 
	\[-2<x_{1r}<-1<x_{2r}<0.\]
	So there exists a positive constant $\sigma_{r}$, depending only on $r$, such that 
	\[[x_{1r}-2\sigma_{r},x_{1r}+2\sigma_{r}]\subset (-2,-1) \quad\text{and}\quad [x_{2r}-2\sigma_{r},x_{2r}+2\sigma_{r}]\subset (-1,0).\]
	The region  $A_{3}$  is accordingly decomposed   further as $A_{3}=\bigcup\limits_{i=1}^{3}A_{3i}$, where
	\[\left\{\begin{array}{l}
		A_{31}=\big\{(\vec{\xi},\vec{\tau})\in A_{3}: \big|\frac{\xi_{3}}{\xi_{2}}-x_{1r}\big|\geq \sigma_{r}\,\,\text{and}\,\,\big|\frac{\xi_{3}}{\xi_{2}}-x_{2r}\big|\geq \sigma_{r}\big\}, \vspace{0.1in}\\
		A_{32}=\big\{(\vec{\xi},\vec{\tau})\in A_{3}: \big|\frac{\xi_{3}}{\xi_{2}}-x_{1r}\big|<\sigma_{r} \big\}, \vspace{0.1in}\\
		A_{33}=\big\{(\vec{\xi},\vec{\tau})\in A_{3}: \big|\frac{\xi_{3}}{\xi_{2}}-x_{2r}\big|<\sigma_{r} \big\}.
	\end{array}\right.\]
	\begin{itemize}
		\item On $A_{31}$, since $\frac{\xi_3}{\xi_2}$ is away from the roots of $p_{r}$, there exists $\delta$, depending only on $r$, such that 
		\[p_{r}\Big(\frac{\xi_3}{\xi_2}\Big)\geq \delta \Big[1+\Big(\frac{\xi_3}{\xi_2}\Big)^2\Big].\]
		Hence, 
		\[|P_{\xi_3,\tau_3}'(\xi_2)|\geq 3|\a_{1}|\delta(\xi_2^2+\xi_3^2)-|\b_2-\b_1|.\]
		When $|\b_2-\b_1|$ is sufficiently small, 
		\be\label{nd1, 0<r<1/4, A31, lbd for deri of L1+L2}
		|P_{\xi_3,\tau_3}'(\xi_2)|\gs \xi_2^2+\xi_3^2\gs \xi_1^2.\ee
		Then 
		\[\text{LHS of (\ref{nd1, 0<r<1/4, A3, single int})}\ls \sup_{\xi_{3},\tau_{3}}\i_{\m{R}}\frac{|\xi_{1}\xi_{2}|^{2\rho}}{\la\xi_{3}\ra ^{2\rho}\la L_{3}\ra ^{2(1-b)}}\,\frac{|P_{\xi_3,\tau_3}'(\xi_2)|}{\la P_{\xi_3,\tau_3}(\xi_2)\ra ^{2b}}\,d\xi_{2}.\]
		In order to prove the boundedness of the above integral, it suffices to show 
		\be\label{nd1, 0<r<1/4, A31, final est}
		\frac{|\xi_{1}\xi_{2}|^{2\rho}}{\la\xi_{3}\ra ^{2\rho}\la L_{3}\ra ^{2(1-b)}}\leq C.\ee
		Since $\la L_{3}\ra=\text{MAX}$, it follows from (\ref{nd1, 0<r<1/4, lbd for MAX}) that $\la L_{3}\ra\gs |\xi_{2}|(\xi_1^2+\xi_2^2)\gs |\xi_1\xi_2|^{\frac32}$. Finally, due to the restriction $\rho<\frac34$ and the choice (\ref{nd1, 0<r<1/4, cond on b}) for $b$, we have $3(1-b)\geq 2\rho$. Therefore, 
		\[\la L_{3}\ra ^{2(1-b)} \gs |\xi_1\xi_2|^{3(1-b)} \gs
		|\xi_1\xi_2|^{2\rho},\]
		which implies (\ref{nd1, 0<r<1/4, A31, final est}).

		\item On $A_{32}$, it is easily seen that 
		$|\xi_1|\sim |\xi_2|\sim |\xi_3|$. Then  
		$\la L_{3}\ra=\text{MAX}\gs |\xi_2|\sum\limits_{i=1}^{3}\xi_i^2\gs |\xi_{3}|^{3}$. Therefore, 
		\begin{eqnarray}
			\text{LHS of (\ref{nd1, 0<r<1/4, A3, single int})} &\sim & \sup_{\xi_{3},\tau_{3}}\frac{|\xi_{3}|^{4\rho+2}}{\la\xi_{3}\ra ^{2\rho}\la L_{3}\ra ^{2(1-b)}}\int\frac{d\xi_{2}}{\la P_{\xi_3,\tau_3}(\xi_2)\ra ^{2b}} \nonumber\\
			& \ls  & \sup_{\xi_{3},\tau_{3}}\frac{|\xi_{3}|^{4\rho+2}}{\la\xi_{3}\ra ^{2\rho}\la \xi_{3}\ra ^{6(1-b)}}\int\frac{d\xi_{2}}{\la P_{\xi_3,\tau_3}(\xi_2)\ra ^{2b}}. \label{nd1, 0<r<1/4, A32, single int}
		\end{eqnarray}
		Since 
		\[P_{\xi_3,\tau_3}(\xi_2)=(\a_1-\a_2)\xi_2^3+3\a_1\xi_3\xi_2^2+(3\a_1\xi_3^2+\b_2-\b_1)\xi_2+\phi^{\a_1,\b_1}(\xi_3)-\tau_3,\]
		then by dividing the leading coefficient $\a_1-\a_2$, we have
		\[\la P_{\xi_3,\tau_3}(\xi_2)\ra\sim \la \xi_2^3+\sigma_{2}\xi_2^2+\sigma_1\xi_2+\sigma_0\ra,\]
		where 
		\[\sigma_{2}=\frac{3\a_1\xi_3}{\a_1-\a_2},\quad \sigma_{1}=\frac{3\a_1\xi_3^2+\b_2-\b_1}{\a_1-\a_2},\quad \sigma_0=\frac{\phi^{\a_1,\b_1}(\xi_3)-\tau_3}{\a_1-\a_2}.\]
		Consequently, it follows from Lemma \ref{Lemma, int for cubic} and direct calculation that 
		\begin{eqnarray}
			\int\frac{d\xi_{2}}{\la P_{\xi_3,\tau_3}(\xi_2)\ra ^{2b}} &\ls & \big\la 3\sigma_{1}-\sigma_{2}^{2}\big\ra^{-\frac14} \nonumber\\
			&\sim & \la -9\a_1\a_2\xi_3^2+3(\a_1-\a_2)(\b_2-\b_1)\ra^{-\frac14}. \label{nd1, 0<r<1/4, A32, int in cubic}
		\end{eqnarray}
		Since $|\xi_{3}|\sim |\xi_{1}|\gs 1$, when $|\b_2-\b_1|$ is sufficiently small, it follows from (\ref{nd1, 0<r<1/4, A32, int in cubic}) that 
		\[\int\frac{d\xi_{2}}{\la P_{\xi_3,\tau_3}(\xi_2)\ra ^{2b}} \ls \la \xi_3^2\ra^{-\frac14}\sim |\xi_{3}|^{-\frac12}.\]
		Hence, it follows from (\ref{nd1, 0<r<1/4, A32, single int}) that 
		\[\text{LHS of (\ref{nd1, 0<r<1/4, A3, single int})} \ls  \sup_{\xi_{3}\gs 1}\frac{|\xi_{3}|^{4\rho+2}}{\la\xi_{3}\ra ^{2\rho}\la \xi_{3}\ra ^{6(1-b)}}|\xi_3|^{-\frac12} \ls \sup_{\xi_{3}\gs 1} |\xi_{3}|^{6b+2\rho-\frac92}\leq C,\]
		where the last inequality is due to $6b+2\rho-\frac92\leq 0$ \big(see (\ref{nd1, 0<r<1/4, cond on b})\big).
		
		\item On $A_{33}$, the argument is similar to that for $A_{32}$.
	\end{itemize}

	\subsubsection{Type (ND1) with $r=\frac14$ and $s\geq \frac34$}
	\label{Subsubsec, ND1, r=1/4}
	
	Similar to the argument as in the proof of Lemma \ref{Lemma, bilin to weighted l2}, it suffices to prove 
	\[\int\limits_{A}\frac{|\xi_{1}|\la\xi_{3}\ra^{s}\prod\limits_{i=1}^{3}|f_{i}(\xi_{i},\tau_{i})|}{\la\xi_{1}\ra^{s}\la\xi_{2}\ra^{s}\la L_{1}\ra^{b}\la L_{2}\ra^{b}\la L_{3}\ra^{1-b}} \leq C\,\prod_{i=1}^{3}\|f_{i}\|_{L^{2}_{\xi\tau}}, \quad\forall\,\{f_{i}\}_{1\leq i\leq 3}.\]
	As $\frac{\la\xi_{3}\ra}{\la\xi_{1}\ra\la\xi_{2}\ra}\leq 1$,  we only need consider the case of  $s=\frac{3}{4}$, i.e., 
	\be\label{nd1, r=1/4, weighted l2 form}
	\int\limits_{A}\frac{|\xi_{1}|\la\xi_{3}\ra^{\frac{3}{4}}\prod\limits_{i=1}^{3}|f_{i}(\xi_{i},\tau_{i})|}{\la\xi_{1}\ra^{\frac{3}{4}}\la\xi_{2}\ra^{\frac{3}{4}}\la L_{1}\ra^{b}\la L_{2}\ra^{b}\la L_{3}\ra^{1-b}} \leq C\,\prod_{i=1}^{3}\|f_{i}\|_{L^{2}_{\xi\tau}}, \quad\forall\,\{f_{i}\}_{1\leq i\leq 3}.\ee
	Assume  $b\in\big(\frac{1}{2},b_0\big]$ with  $b_{0}=1$. Similar as before, it reduces to show 
	\be\label{nd1, r=1/4, single int}
	\sup_{\xi_{3},\tau_{3}}\frac{\la\xi_3\ra^{\frac32}}{\la L_{3}\ra^{2(1-b)}}\int\frac{|\xi_{1}|^{2}}{\la\xi_1\ra^{\frac32}\la\xi_2\ra^{\frac32}\la P_{\xi_3,\tau_3}(\xi_{2})\ra^{2b}}\,d\xi_{2}\leq C,\ee
	where $P_{\xi_3,\tau_3}(\xi_{2})=L_{1}+L_{2}$ is the same as (\ref{L1+L2, H2}) but with $r=\frac14$. More precisely, 
	\be\label{nd1, r=1/4, L1+L2}
	P_{\xi_3,\tau_3}(\xi_{2})=\frac34\a_1\xi_2^3+3\a_1\xi_3\xi_2^2+(3\a_1\xi_3^2+\b_2-\b_1)\xi_2+\phi^{\a_1,\b_1}(\xi_3)-\tau_3.\ee
	Taking derivative with respect to $\xi_2$, then 
	\[P'_{\xi_3,\tau_3}(\xi_{2})=\frac94\a_1\xi_2^2+6\a_1\xi_3\xi_2+3\a_1\xi_3^2+\b_2-\b_1.\]
	When $\xi_2\neq 0$, it can be rewritten as 
	\be\label{nd1, r=1/4, deri of L1+L2}
	P'_{\xi_3,\tau_3}(\xi_{2})=3\a_1\xi_2^2 p\Big(\frac{\xi_3}{\xi_2}\Big)+\b_2-\b_1,\ee
	where the function $p$ is just the function $p_{r}$, as defined in (\ref{def of p_r}), with $r=\frac14$. That is
	\[p(x)=x^2+2x+\frac34.\]
	Since the function $p$ has two roots $-\frac32$ and $-\frac12$, we further decompose the domain $A$ as 
	$A=\bigcup\limits_{i=0}^{3}B_{i}$ with
	\begin{align*}
		&B_{0}=\{(\vec{\xi},\vec{\tau})\in A:|\xi_{1}|\leq 1\}, \vspace{0.05in}\\
		&B_{1}=\Big\{(\vec{\xi},\vec{\tau})\in A:|\xi_{1}|>1,\, \Big |\frac{\xi_{3}}{\xi_{2}}+\frac32\Big|\geq \frac{1}{10}\,\,\text{and}\,\,\Big |\frac{\xi_{3}}{\xi_{2}}+\frac12\Big|\geq \frac{1}{10}\Big\}, \vspace{0.25in}\\
		&B_{2}=\Big\{(\vec{\xi},\vec{\tau})\in A:|\xi_{1}|>1,\, \Big |\frac{\xi_{3}}{\xi_{2}}+\frac32\Big|< \frac{1}{10}\Big\}, \vspace{0.05in}\\
		&B_{3}=\Big\{(\vec{\xi},\vec{\tau})\in A:|\xi_{1}|>1,\,\Big |\frac{\xi_{3}}{\xi_{2}}+\frac12\Big|<\frac{1}{10}\Big\}.
	\end{align*}
	
	Among these regions $ \{B_i\}_{i=0}^{3} $, the most difficult analysis occurs on $ B_2 $ (or equivalently on $ B_3 $), so next we will just focus on $ B_2 $. It is easily seen that $|\xi_1|\sim |\xi_2|\sim |\xi_3|$ on $B_{2}$, so 
	\be\label{nd1, r=1/4, final est}
	\text{LHS of (\ref{nd1, r=1/4, single int})}\ls \sup_{\xi_{3},\tau_{3}}\frac{|\xi_3|^{\frac12}}{\la L_{3}\ra^{2(1-b)}}\int\frac{1}{\la P_{\xi_3,\tau_3}(\xi_{2})\ra^{2b}}\,d\xi_{2}.\ee
	By dividing the leading coefficient $\frac{3}{4}\a_1$ in (\ref{nd1, r=1/4, L1+L2}),  we get
	\[\la P_{\xi_3,\tau_3}(\xi_{2})\ra \sim\big\la\xi_2^3+\sigma_2\xi_2^2+\sigma_1\xi_2+\sigma_0\big\ra,\]
	where 
	\[\sigma_2=4\xi_3, \quad \sigma_1=4\xi_3^2+\frac{4(\b_2-\b_1)}{3\a_1}, \quad \sigma_0=\frac{4}{3\a_1}\Big(\phi^{\a_1,\b_1}(\xi_3)-\tau_3\Big).\]
	Then it follows from Lemma \ref{Lemma, int for cubic} that 
	\begin{eqnarray*}
		\int\frac{1}{\la P_{\xi_3,\tau_3}(\xi_{2})\ra^{2b}}\,d\xi_{2} &\ls &\la 3\sigma_1-\sigma_2^2\ra^{-\frac14}\\
		&=& \Big\la -4\xi_3^2+\frac{4(\b_2-\b_1)}{\a_1}\Big\ra^{-\frac14}.
	\end{eqnarray*}
	Since $|\xi_3|\sim |\xi_1|\geq 1$, when $|\b_2-\b_1|$ is sufficiently small, 
	$\big\la-4\xi_3^2+\frac{4(\b_2-\b_1)}{\a_1}\big\ra\sim \xi_3^2$. Consequently, 
	\[\int\frac{1}{\la P_{\xi_3,\tau_3}(\xi_{2})\ra^{2b}}\,d\xi_{2}\ls |\xi_3|^{-\frac12},\]
	which implies the boundedness of the right hand side of (\ref{nd1, r=1/4, final est}).
	
	\subsubsection{Type (ND1) with $r=1$ and $s>0$}
	\label{Subsubsec, ND1, r=1}
	
	Similar as before, it suffices to show
	\be\label{nd1, r=1, weighted l2 form}
	\int\limits_{A}\frac{|\xi_{1}|\la\xi_{3}\ra^{s}\prod\limits_{i=1}^{3}|f_{i}(\xi_{i},\tau_{i})|}{\la\xi_{1}\ra^{s}\la\xi_{2}\ra^{s}\la L_{1}\ra^{b}\la L_{2}\ra^{b}\la L_{3}\ra^{1-b}} \leq C\,\prod_{i=1}^{3}\|f_{i}\|_{L^{2}_{\xi\tau}}, \quad\forall\,\{f_{i}\}_{1\leq i\leq 3},\ee
	and we only need to consider the case when $0<s\leq \frac{3}{4}$.   Let  $b_{0}=\frac{1}{2}+\frac{s}{3}$ and assume  $b\in\big(\frac{1}{2},b_{0}\big]$. Hence, 
	\be\label{nd1, r=1, cond on b}
	\frac{1}{2}<b\leq \frac{1}{2}+\frac{s}{3}\leq \frac{3}{4}.\ee
	Decompose  the region $A$ as $A=\bigcup\limits_{i=0}^{2} B_{i}$, where
	\begin{align*}
		&B_{0}=\{(\vec{\xi},\vec{\tau})\in A:|\xi_{1}|\leq 1\},\\
		&B_{1}=\Big\{(\vec{\xi},\vec{\tau})\in A:|\xi_{1}|>1,\, |\xi_{3}|\geq \frac{1}{4}|\xi_{1}|\Big\},\\
		&B_{2}=\Big\{(\vec{\xi},\vec{\tau})\in A:|\xi_{1}|>1,\, |\xi_{3}|< \frac{1}{4}|\xi_{1}|\Big\}.
	\end{align*}
	Among the above regions, the most difficult analysis occurs on $ B_2 $, so next we will just focus on this part.
	
	\noindent {\bf Contribution on $B_{2}$:} 
	
	Since $|\xi_{3}|< \frac{1}{4}|\xi_{1}|$, then
	\be\label{nd1, r=1, xi_2 approx xi_1}
	\frac{4}{5}|\xi_{2}|\leq |\xi_{1}|\leq \frac{4}{3}|\xi_{2}|\ee
	and 
	\[\int\frac{|\xi_{1}|\la\xi_{3}\ra^{s}\prod\limits_{i=1}^{3}|f_{i}(\xi_{i},\tau_{i})|}{\la\xi_{1}\ra^{s}\la\xi_{2}\ra^{s}\la L_{1}\ra^{b}\la L_{2}\ra^{b}\la L_{3}\ra^{1-b}} 
	\ls 
	\iint\frac{|\xi_{2}|^{1-s}|f_{2}|}{\la L_{2}\ra^{b}}\bigg(\iint\frac{|f_{1}f_{3}|\,d\tau_{1}d\xi_{1}}{\la L_{1}\ra^{b}\la L_{3}\ra^{1-b}}\bigg)\,d\tau_{2}\,d\xi_{2}.\]
	Thus, similar to the derivation for (\ref{d2, neg, A22, L1+L3}) in Section \ref{Subsec, proof of bilin, neg}, it suffices to prove 
	\be\label{nd1, r=1, B2, single int}
	\sup_{\xi_{2},\tau_{2}}\frac{|\xi_{2}|^{2(1-s)}}{\la L_{2}\ra^{2b}}\int\frac{d\xi_{1}}{\la Q_{\xi_2,\tau_2}(\xi_{1})\ra^{2(1-b)}}\leq C,\ee
	where $Q_{\xi_2,\tau_2}(\xi_{1})=L_1+L_3$ is as defined in (\ref{L1+L3, H2}). More specifically, 
	\be\label{nd1, r=1, Q}
	Q_{\xi_2,\tau_2}(\xi_{1})
	=3\a_1\xi_2\xi_1^2+3\a_1\xi_2^2\xi_1+\phi^{\a_1,\b_1}(\xi_2)-\tau_2.\ee
	Taking derivative respect to $\xi_1$,
	\be\label{nd1, r=1, Q'}
	Q'_{\xi_2,\tau_2}(\xi_{1})=3\a_1\xi_2(2\xi_1+\xi_2).\ee
	Due to (\ref{nd1, r=1, xi_2 approx xi_1}), we have 
	\be\label{nd1, r=1, lbd for Q'}
	|Q'_{\xi_2,\tau_2}(\xi_{1})|\gs |\xi_{2}|^{2}.\ee
	Moreover, since $r=1$, then $\a_1=\a_2$ and  
	\[\big|\phi^{\a_1,\b_1}(\xi_2)-\tau_2\big|=\big|\phi^{\a_2,\b_2}(\xi_2)-\tau_2+(\b_2-\b_1)\xi_2 \big|=\big|-L_{2}+(\b_2-\b_1)\xi_2\big|.\]
	When $|\b_2-\b_1|$ is sufficiently small, 
	\[|-L_{2}+(\b_2-\b_1)\xi_2|\leq |L_2|+|\xi_2|^{3}. \]
	As a result, it follows from (\ref{nd1, r=1, Q}) and (\ref{nd1, r=1, xi_2 approx xi_1}) that 
	\be\label{nd1, r=1, ubd for Q}
	\big|Q_{\xi_2,\tau_2}(\xi_{1})\big|\leq C |\xi_{2}|^{3}+|L_{2}|.\ee
	Then by (\ref{nd1, r=1, ubd for Q}) and (\ref{nd1, r=1, lbd for Q'}), we obtain
	\begin{eqnarray*}
		\int\frac{d\xi_{1}}{\la Q_{\xi_2,\tau_2}(\xi_{1})\ra^{2(1-b)}} &\leq & 
		\i_{\{|Q_{\xi_2,\tau_2}(\xi_{1})|\leq C|\xi_{2}|^{3}+|L_{2}|\}}
		\frac{1}{|Q'_{\xi_2,\tau_2}(\xi_{1})|}\frac{|Q'_{\xi_2,\tau_2}(\xi_{1})|}{\la Q_{\xi_2,\tau_2}(\xi_{1})\ra^{2(1-b)}}\,d\xi_{1}\\
		&\ls & \frac{1}{|\xi_{2}|^{2}}\int_{0}^{C|\xi_{2}|^{3}+|L_{2}|}\frac{dy}{\la y\ra^{2(1-b)}}\\
		&\ls & \frac{|\xi_{2}|^{3(2b-1)}+\la L_{2}\ra^{2b-1}}{|\xi_{2}|^{2}}.
	\end{eqnarray*}
	Hence,
	\begin{eqnarray}
		\text{LHS of (\ref{nd1, r=1, B2, single int})} &\ls & \sup_{\xi_2,\tau_2}\frac{|\xi_{2}|^{2(1-s)}\big(|\xi_{2}|^{3(2b-1)}+\la L_{2}\ra^{2b-1}\big)}{\la L_{2}\ra^{2b}|\xi_{2}|^2} \notag\\
		&=& \sup_{\xi_2,\tau_2}\frac{|\xi_2|^{6b-2s-1}+\la L_2\ra^{2b-1}|\xi_2|^{2(1-s)}}{\la L_{2}\ra^{2b}|\xi_{2}|^2}. \label{nd1, r=1, B2, final est}
	\end{eqnarray}
	Since $|\xi_2|\gs 1$ and (\ref{nd1, r=1, cond on b}) implies $6b-2s-1\leq 2$, the boundedness of (\ref{nd1, r=1, B2, final est}) is justified.

	\section{Sharpness of bilinear estimates}
	\label{Sec, sharp of bilin est}
	
	In this section we  prove Theorem \ref{Thm, sharp d2, neg} and \ref{Thm, sharp bilin est, general} which establish the sharpness of all the bilinear estimates in Theorem \ref{Thm, d2, neg} and \ref{Thm, bilin est, general}. We first fix some notations.  First, we define $A$  as in (\ref{int domain}), that is 
	\[A=\Big\{(\vec{\xi},\vec{\tau})\in\m{R}^{6}:\sum_{i=1}^{3}\xi_{i}=\sum_{i=1}^{3}\tau_{i}=0\Big\}.\]
	Secondly,  for any set $E\in \m{R}^2$,  we denote its Lebesgue  measure by $|E|$.   The following is a simple result which will be used frequently in this section.
	
	\begin{lemma}\label{Lemma, area of convolution}
		Let $E_{i}\subset\m{R}^{2}\,(1\leq i\leq 3)$ be bounded regions such that $E_{1}+E_{2}\subseteq -E_{3}$,  i.e., 
		\be\label{comp region}
		-(\xi_{1}+\xi_{2}, \tau_{1}+\tau_{2})\in E_{3},\quad \forall\, (\xi_{i},\tau_{i})\in E_{i},\,i=1,2.\ee
		Then 
		\[\int\limits_{A}\prod_{i=1}^{3}\mb{1}_{E_{i}}(\xi_{i},\tau_{i})= |E_{1}\|E_{2}|.\]
	\end{lemma}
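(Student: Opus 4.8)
\textbf{Proof plan for Lemma \ref{Lemma, area of convolution}.}

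The statement is essentially a Fubini computation combined with the observation that the constraint set $A$ allows $(\xi_1,\tau_1)$ and $(\xi_2,\tau_2)$ to range freely over $E_1$ and $E_2$ once we know $(\xi_3,\tau_3)$ is forced to lie in $E_3$. The plan is as follows. First I would unwind the integral over $A$: since $A$ is cut out by the linear constraints $\sum_{i=1}^3\xi_i=0$ and $\sum_{i=1}^3\tau_i=0$, the integral $\int_A \prod_{i=1}^3 \mb{1}_{E_i}(\xi_i,\tau_i)$ should be interpreted (as everywhere else in the paper, cf. the passage from (\ref{bilin est, div, general}) to (\ref{weighted l2 form, div, general})) as the integral over the four free variables $(\xi_1,\tau_1,\xi_2,\tau_2)\in\m{R}^4$ of the integrand with $(\xi_3,\tau_3)$ replaced by $-(\xi_1+\xi_2,\tau_1+\tau_2)$. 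That is,
\[
\int\limits_{A}\prod_{i=1}^{3}\mb{1}_{E_{i}}(\xi_{i},\tau_{i})
=\int_{\m{R}^{4}}\mb{1}_{E_{1}}(\xi_{1},\tau_{1})\,\mb{1}_{E_{2}}(\xi_{2},\tau_{2})\,\mb{1}_{E_{3}}\big(-(\xi_{1}+\xi_{2}),-(\tau_{1}+\tau_{2})\big)\,d\xi_{1}d\tau_{1}d\xi_{2}d\tau_{2}.
\]

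Next I would invoke the hypothesis (\ref{comp region}): whenever $(\xi_1,\tau_1)\in E_1$ and $(\xi_2,\tau_2)\in E_2$, the point $-(\xi_1+\xi_2,\tau_1+\tau_2)$ lies in $E_3$, so the third indicator equals $1$ on the entire support of the product of the first two. Hence the integrand reduces to $\mb{1}_{E_1}(\xi_1,\tau_1)\mb{1}_{E_2}(\xi_2,\tau_2)$, and by Tonelli's theorem (the integrand is nonnegative) the integral factors as
\[
\Big(\int_{\m{R}^2}\mb{1}_{E_1}(\xi_1,\tau_1)\,d\xi_1 d\tau_1\Big)\Big(\int_{\m{R}^2}\mb{1}_{E_2}(\xi_2,\tau_2)\,d\xi_2 d\tau_2\Big)=|E_1|\,|E_2|,
\]
using that $E_1,E_2$ are bounded (hence of finite measure, so the product is well-defined and finite). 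This completes the argument.

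There is essentially no obstacle here; the only point requiring a word of care is the meaning of "$\int_A$", i.e. which reference measure on the codimension-two affine subspace $A$ we are using. The convention throughout the paper (implicit in Lemma \ref{Lemma, bilin to weighted l2} and in the reduction (\ref{weighted l2 form, div, general})) is precisely the pushforward of Lebesgue measure on the four coordinates $(\xi_1,\tau_1,\xi_2,\tau_2)$ under the parametrization that solves for $(\xi_3,\tau_3)$; with that reading the lemma is immediate, and it is exactly the normalization needed for the subsequent counterexample constructions in Section \ref{Sec, sharp of bilin est}, where one builds $f_i=\mb{1}_{E_i}$ on small boxes and needs $\int_A\prod f_i$ computed against $\prod\|f_i\|_{L^2}=\sqrt{|E_1||E_2||E_3|}$.
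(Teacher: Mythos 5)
Your proposal is correct and follows essentially the same route as the paper: rewrite the integral over $A$ as an iterated integral in the free variables $(\xi_1,\tau_1,\xi_2,\tau_2)$, observe that hypothesis (\ref{comp region}) makes the third indicator identically $1$ on the support of the first two, and then factor by Fubini/Tonelli. The paper's proof is a one-liner that does exactly this; your only addition is the (welcome) explicit remark about which measure on $A$ is being used.
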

	The proof of this lemma follows from (\ref{comp region}) by rewriting the left hand side of the above equation as 
	\[\iint_{E_{1}}\bigg(\iint_{E_{2}}\mb{1}_{E_{3}}\big(-(\xi_{1}+\xi_{2}),-(\tau_{1}+\tau_{2})\big) \,d\xi_{2}d\tau_{2}\bigg)\,d\xi_{1}d\tau_{1}.\]
	
	\subsection{Proof of Theorem \ref{Thm, sharp d2, neg}}
	\label{Subsec, proof of sharp of bilin est, neg}
	
	\noindent {\bf Proof of Case (1).}
	
	Fix $\a_1,\a_2,\b\in\m{R}$ with $\a_1\a_2<0$. Suppose there exist $s<-\frac{13}{12}$, $b\in\m{R}$ and $C=C(\a_1,\a_2,\b,s,b)$ such that  the bilinear estimate (\ref{d2}) holds. Then it follows from Lemma \ref{Lemma, bilin to weighted l2} that 
	\be\label{weighted l2 form, d2, neg, fail}
	\int\limits_{A}\frac{\xi_{3}\la\xi_{3}\ra^{s}\prod\limits_{i=1}^{3}f_{i}(\xi_{i},\tau_{i})}{\la\xi_{1}\ra^{s}\la\xi_{2}\ra^{s}\la L_{1}\ra^{b}\la L_{2}\ra^{b}\la L_{3}\ra^{1-b}} \leq C\,\prod_{i=1}^{3}\|f_{i}\|_{L^{2}_{\xi\tau}}, \quad\forall\, \{f_i\}_{1\leq i\leq 3},  \ee
	where 
	\be\label{def of L, H2, fail}
	L_{1}=\tau_{1}-\phi^{\a_1,\b}(\xi_1),\quad L_{2}=\tau_{2}-\phi^{\a_2,\b}(\xi_2),\quad L_{3}=\tau_{3}-\phi^{\a_1,\b}(\xi_3).\ee
	Let $r=\frac{\a_2}{\a_1}$. Then $r<0$. The resonance function is $H_{2}$ as calculated in (\ref{exp of H_2}) with $\b_1=\b_2=\b$, that is
	\begin{eqnarray}
		H_2(\xi_{1},\xi_{2},\xi_{3}) &=& \xi_{2}\Big[(\a_2-\a_1)\xi_2^2-3\a_1\xi_1\xi_2-3\a_1\xi_1^2\Big] \nonumber\\
		&=& -3\a_1\xi_2\Big(\frac{1-r}{3}\xi_2^2+\xi_1\xi_2+\xi_1^2\Big) \label{H2, sharp, formula}.
	\end{eqnarray}
	So $|H_2|\sim |\xi_2|(\xi_1^2+\xi_2^2)$ due to the fact that $r<0$.
	\begin{itemize}
		\item {\bf Claim A}:  {\em If   (\ref{weighted l2 form, d2, neg, fail}) holds, then 
			\be\label{fail of d2-neg-cond 1 on b}
			b\leq \frac{4+2s}{3}.\ee }
		
		For any large number $N>0$, let 
		\begin{align*}
			B_{1}&=\big\{(\xi_{1},\tau_{1}):N-1\leq \xi_{1}\leq N,\quad \big |\tau_{1}-\phi^{\a_1,\b}(\xi_1)\big |\leq 1\big\},\\
			B_{2}&=\big\{(\xi_{2},\tau_{2}):-N-2\leq \xi_{2}\leq -N-1,\quad \big|\tau_{2}-\phi^{\a_2,\b}(\xi_2)\big|\leq 1\big\}.
		\end{align*}
		For any $(\xi_{1},\tau_{1})\in B_{1}$ and $(\xi_{2},\tau_{2})\in B_{2}$, $(\xi_{3},\tau_{3})=-(\xi_{1}+\xi_{2},\tau_{1}+\tau_{2})$ satisfies $1\leq \xi_{3}\leq 3$. Since $|\xi_{1}-N|\leq 1$ and $|\xi_{2}+N|\leq 2$, 
		\begin{eqnarray}
			\phi^{\a_1,\b}(\xi_1)+\phi^{\a_2,\b}(\xi_2)&=& \a_{1}(\xi_{1}-N+N)^{3}+\a_{2}(\xi_{2}+N-N)^{3}-\b(\xi_1+\xi_{2}) \nonumber\\
			&=& (\a_{1}-\a_2)N^{3}+O(N^{2}). \label{fail-d2-key}
		\end{eqnarray}
		Moreover, since $|\tau_{1}-\phi^{\a_1,\b}(\xi_1)|\leq 1$ and $|\tau_{2}-\phi^{\a_2,\b}(\xi_2)|\leq 1$, it follows from (\ref{fail-d2-key}) that 
		\[|\tau_{3}+(\a_{1}-\a_2)N^{3}|=O(N^{2}).\]
		
		Thus, for  a suitably large constant $C_{1}$, the set 
		\[B_{3}:=\{(\xi_{3},\tau_{3}):1\leq \xi_{3}\leq 3,\quad |\tau_{3}+(\a_{1}-\a_{2})N^{3}|\leq C_{1}N^{2}\}\]
		satisfies $B_{1}+B_{2}\subseteq -B_{3}$. In addition, $|B_{1}|=|B_{2}|=2$ and $|B_{3}|\sim N^{2}$. Choosing $f_{i}=\mb{1}_{B_{i}}$ ( $1\leq i\leq 3$) in  (\ref{weighted l2 form, d2, neg, fail}) yields 
		\be\label{fail of d2-neg-1}
		C\,\prod_{i=1}^{3}|B_{i}|^{\frac{1}{2}}\geq \int\limits_{A}\frac{\xi_{3}\la\xi_{3}\ra^{s}\prod\limits_{i=1}^{3}\mb{1}_{B_{i}}(\xi_{i},\tau_{i})}{\la\xi_{1}\ra^{s}\la\xi_{2}\ra^{s}\la L_{1}\ra^{b}\la L_{2}\ra^{b}\la L_{3}\ra^{1-b}}.\ee
		For any $(\xi_{i},\tau_{i})\in B_{i}$, $1\leq i\leq 3$, it holds that
		\[|L_{1}|\leq 1,\quad |L_{2}|\leq 1, \quad |H_{2}(\xi_{1},\xi_{2},\xi_{3})|\sim N^{3}.\]
		So $|L_{3}|=|H_2+L_{1}+L_{2}|\sim N^{3}$. It then follows from (\ref{fail of d2-neg-1}) and Lemma \ref{Lemma, area of convolution} that 
		\[N \gs \frac{1}{N^{2s}N^{3(1-b)}}\int\limits_{A}\prod_{i=1}^{3}\mb{1}_{B_{i}}(\xi_{i},\tau_{i})= \frac{|B_{1}\|B_{2}|}{N^{2s}N^{3(1-b)}}\sim \frac{1}{N^{2s}N^{3(1-b)}}.\]
		which implies (\ref{fail of d2-neg-cond 1 on b}).
		
		\item {\bf Claim B:} {\em If   (\ref{weighted l2 form, d2, neg, fail}) holds, then 
			\be\label{fail of d2-neg-cond 2 on b}
			b\geq \frac{1}{4}-\frac{s}{3}.\ee 
		}
		\medskip
		Similarly, for large number $N$, let 
		\begin{align*}
			B_{1}&:=\big\{(\xi_{1},\tau_{1}):N-N^{-\frac{1}{2}}\leq \xi_{1}\leq N,\quad \big|\tau_{1}-\phi^{\a_1,\b}(\xi_1)\big|\leq 1\big\},\\
			B_{3}&:=\big\{(\xi_{3},\tau_{3}):N-N^{-\frac{1}{2}}\leq \xi_{3}\leq N,\quad \big|\tau_{3}-\phi^{\a_1,\b}(\xi_3)\big|\leq 1\big\}.
		\end{align*}
		For any $(\xi_{1},\tau_{1})\in B_{1}$ and $(\xi_{3},\tau_{3})\in B_{3}$,  $(\xi_{2},\tau_{2})=-(\xi_{1}+\xi_{3},\tau_{1}+\tau_{3})$ satisfies
		\[ \mbox{$-2N\leq \xi_{2}\leq -2N+2N^{-\frac{1}{2}}$. }\]
		As 
		\begin{eqnarray*}
			\phi^{\a_1,\b}(\xi_1)+\phi^{\a_1,\b}(\xi_3) &=& \a_{1}\xi_{1}^{3}+\a_{1}\xi_{3}^{3}-\b(\xi_1+\xi_3) \\
			&=& \a_{1}(\xi_{1}+\xi_{3})\bigg[\frac{(\xi_{1}+\xi_{3})^{2}}{4}+\frac{3(\xi_{1}-\xi_{3})^{2}}{4}\bigg]+\b\xi_2 \\
			&=& -\frac{\a_{1}\xi_{2}^{3}}{4}+O(1)+\b\xi_2,
		\end{eqnarray*}
		it follows from $\big|\tau_{1}-\phi^{\a_1,\b}(\xi_1)\big|\leq 1$ and $\big|\tau_{3}-\phi^{\a_1,\b}(\xi_3)\big|\leq 1$ that
		\[\Big|\tau_{2}-\frac{\a_{1}}{4}\xi_{2}^{3}+\b\xi_2\Big|=O(1).\]
		Thus,  for a suitably large constant $C_{2}$, the set 
		\[B_{2}:=\big\{(\xi_{2},\tau_{2}):-2N\leq \xi_{2}\leq -2N+2N^{-\frac{1}{2}},\quad \big|\tau_{2}-\frac{\a_{1}}{4}\xi_{2}^{3}+\b\xi_2\big|\leq C_{2}\big\}\]
		satisfies $B_{1}+B_{3}\subseteq -B_{2}$. Moreover, $|B_{1}|\sim |B_{3}|\sim |B_{2}|\sim N^{-\frac{1}{2}}$. Choosing $f_{i}=\mb{1}_{B_{i}}$ ($1\leq i\leq 3$) in  (\ref{weighted l2 form, d2, neg, fail})  yields
		\be\label{fail of d2-neg-2}
		C\,\prod_{i=1}^{3}|B_{i}|^{\frac{1}{2}}\geq \int\limits_{A}\frac{\xi_{3}\la\xi_{3}\ra^{s}\prod\limits_{i=1}^{3}\mb{1}_{B_{i}}(\xi_{i},\tau_{i})}{\la\xi_{1}\ra^{s}\la\xi_{2}\ra^{s}\la L_{1}\ra^{b}\la L_{2}\ra^{b}\la L_{3}\ra^{1-b}}.\ee
		As for any $(\xi_{i},\tau_{i})\in B_{i}$, $1\leq i\leq 3$,  
		\[|L_{1}|\leq 1,\quad |L_{3}|\leq 1, \quad |H_2(\xi_{1},\xi_{2},\xi_{3})|\sim N^{3},\]
		we have $|L_{2}|=|H_2+L_{1}+L_{3}|\sim N^{3}$. It then follows from (\ref{fail of d2-neg-2}) and Lemma \ref{Lemma, area of convolution} that
		\[N^{-\frac{3}{4}} \gs \frac{N^{1+s}}{N^{2s}N^{3b}}\int\limits_{A}\prod_{i=1}^{3}\mb{1}_{B_{i}}(\xi_{i},\tau_{i})= \frac{N^{1+s}|B_{1}\|B_{3}|}{N^{2s}N^{3b}}\sim \frac{1}{N^{s}N^{3b}},\]
		which implies (\ref{fail of d2-neg-cond 2 on b}).
	\end{itemize}
	
	Combining (\ref{fail of d2-neg-cond 1 on b}) and (\ref{fail of d2-neg-cond 2 on b}) together yields $s\geq -\frac{13}{12}$, which contradicts to the assumption $s<-\frac{13}{12}$.
	
	\bigskip
	\noindent {\bf Proof of Case (2).} 
	
	If  $-\frac{13}{12}\leq s\leq -1$,  the  same arguments as in the proofs of Claim A and Claim  B show that  (\ref{fail of d2-neg-cond 1 on b}) and (\ref{fail of d2-neg-cond 2 on b}) are necessary conditions on $b$  if  the bilinear estimate  (\ref{d2}) holds.
	
	\bigskip
	
	\noindent {\bf Proof of Case (3).} 
	
	Let $-1<s<-\frac{3}{4}$.  The  same argument  as in the proof of Claim B shows 
	$b\geq \frac{1}{4}-\frac{s}{3}$.
	To obtain the desired upper bound for $b$,  let
	\begin{align*}
		B_{1}&:=\big\{(\xi_{1},\tau_{1}):N-1\leq \xi_{1}\leq N,\quad \big|\tau_{1}-\phi^{\a_1,\b}(\xi_1)\big|\leq 1\big\},\\
		B_{2}&:=\big\{(\xi_{2},\tau_{2}):N-1\leq \xi_{2}\leq N,\quad \big|\tau_{2}-\phi^{\a_2,\b}(\xi_2)\big|\leq 1\big\}.
	\end{align*}
	Then similar to the procedure in the proof of Claim A in Case (1), there exists a suitably large constant $C_{3}$ such that the set 
	\[B_{3}:=\{(\xi_{3},\tau_{3}):-2N\leq \xi_{3}\leq -2N+2,\quad |\tau_{3}+(\a_{1}+\a_{2})N^{3}|\leq C_{3}N^{2}\}\]
	has the property $B_{1}+B_{2}\subseteq -B_{3}$. In addition, $|B_{1}|=|B_{2}|=2$ and $|B_{3}|\sim N^{2}$. Choosing $f_{i}=-\mb{1}_{B_{i}}$  ($1\leq i\leq 3$) in (\ref{weighted l2 form, d2, neg, fail}) yields  
	\be\label{fail of div 2-neg-3}
	C\,\prod_{i=1}^{3}|B_{i}|^{\frac{1}{2}}\geq \int\limits_{A}\frac{-\xi_{3}\la\xi_{3}\ra^{s}\prod\limits_{i=1}^{3}\mb{1}_{B_{i}}(\xi_{i},\tau_{i})}{\la\xi_{1}\ra^{s}\la\xi_{2}\ra^{s}\la L_{1}\ra^{b}\la L_{2}\ra^{b}\la L_{3}\ra^{1-b}}.\ee
	For any $(\xi_{i},\tau_{i})\in B_{i}$, $1\leq i\leq 3$, 
	\[|L_{1}|\leq 1,\quad |L_{2}|\leq 1, \quad |H_2(\xi_{1},\xi_{2},\xi_{3})|\sim N^{3}, \quad |L_{3}|\sim N^3.\]
	It then  follows from (\ref{fail of div 2-neg-3}) and Lemma \ref{Lemma, area of convolution} that  $b\leq 1+\frac{s}{3}$.

	\subsection{Proof of Theorem \ref{Thm, sharp bilin est, general}}
	\label{Subsec, proof of sharp of bilin est, general}
	
	First, we want to point out several cases in Table \ref{Table, sharp bilin est} which have been known or can be proved similarly. When $r=1$, the bilinear estimates of Type (D1) and (D2) have been known to fail if $s<-\frac34$, see \cite{KPV96}. When $r>\frac14$ but $r\neq 1$, the bilinear estimates of Type (D1) and (D2) do not hold for $s<0$, see  \cite{Oh09a}. The situations for Type (ND1) and (ND2) can be treated similarly.
	
	For the rest cases in Table \ref{Table, sharp bilin est}, we will only prove the failure of the bilinear estimates in the following five cases since other cases are similar.  {\bf Case (1)}: Type (D1) with $r<0$ and $s<-\frac34$.  {\bf Case (2)}: Type (D2) with $0<r<\frac14$ and $s<-\frac34$.  {\bf Case (3)}: Type (ND1) with $r<0$ and $s<-\frac34$.  {\bf Case (4)}: Type (ND1) with $r=\frac14$ and $s<\frac34$.  {\bf Case (5)}: Type (ND1) with $r=1$ and $s<0$.
	
	Moreover, the general strategy for all the above cases is very similar to that in the proof of Theorem \ref{Thm, sharp d2, neg} as shown above. The key ingredient is to construct suitable $ \{B_i\}_{i=1}^3 $. So in the following, we will only write out the sets $ \{B_i\}_{i=1}^{3} $ that works for the argument, but omit the detailed computations which can be easily carried out.
	
	\bigskip
	\noindent {\bf Proof of Case (1).} 
	For any large number $N>0$, define
	\begin{align*}
		B_{1}&=\big\{(\xi_{1},\tau_{1}):N-1\leq \xi_{1}\leq N,\quad \frac{1}{2}N\leq \big|\tau_{1}-\phi^{\a_1,\b}(\xi_1)\big|\leq N\big\},\\
		B_{2}&=\big\{(\xi_{2},\tau_{2}):-N-2\leq \xi_{2}\leq -N-1,\quad \frac{1}{2}N\leq \big|\tau_{2}-\phi^{\a_1,\b}(\xi_2)\big|\leq N\big\}.
	\end{align*}
	Then we choose a suitably large constant $C_1$ such that the set   
	\[B_{3}:=\{(\xi_{3},\tau_{3}):1\leq \xi_{3}\leq 3,\quad |\tau_{3}-3\a_{1}N^{2}\xi_{3}|\leq C_{1}N\}\]
	satisfies $B_{1}+B_{2}\subseteq -B_{3}$. 
	
	\bigskip
	\noindent {\bf Proof of Case (2).} 
	For large number  $N>0$, define
	\begin{align*}
		B_{1}&:=\big\{(\xi_{1},\tau_{1}):N-N^{-\frac{1}{2}}\leq \xi_{1}\leq N,\quad \big|\tau_{1}-\phi^{\a_1,\b}(\xi_1)\big|\leq 1\big\},\\
		B_{2}&:=\big\{(\xi_{2},\tau_{2}):r^{-\frac{1}{2}}N-N^{-\frac{1}{2}}\leq \xi_{2}\leq r^{-\frac{1}{2}}N,\quad \big|\tau_{2}-\phi^{\a_2,\b}(\xi_2)\big|\leq 1\big\}.
	\end{align*}
	Then we choose a suitably large constant $C_1$ such that the set   
	\[B_{3}:=\Big\{(\xi_{3},\tau_{3}):-\big(1+r^{-\frac{1}{2}}\big)N\leq \xi_{3}\leq -\big(1+r^{-\frac{1}{2}}\big)N+2N^{-\frac{1}{2}},\, \big|\tau_{3}-2\a_{1}\big(1+r^{-\frac12}\big)N^{3}+(\b-3\a_{1}N^{2})\xi_{3}\big|\leq C_{1}\Big\}\]
	satisfies $B_{1}+B_{2}\subseteq -B_{3}$.

	\bigskip
	\noindent {\bf Proof of Case (3).} 
	For large number $N>0$,  define 
	\begin{align*}
		B_{1}&:=\big\{(\xi_{1},\tau_{1}):N-N^{-\frac{1}{2}}\leq \xi_{1}\leq N,\quad \frac{1}{2}N^{\frac{3}{2}}\leq \big|\tau_{1}-\phi^{\a_1,\b}(\xi_1)\big|\leq N^{\frac{3}{2}}\big\},\\
		B_{2}&:=\big\{(\xi_{2},\tau_{2}):-N-N^{-\frac{1}{2}}\leq \xi_{2}\leq -N,\quad \frac{1}{2}N^{\frac{3}{2}}\leq \big|\tau_{2}-\phi^{\a_2,\b}(\xi_2)\big|\leq N^{\frac{3}{2}}\big\}.
	\end{align*}
	Then we choose a suitably large constant $C_1$ such that the set   
	\[B_{3}:=\{(\xi_{3},\tau_{3}):0\leq \xi_{3}\leq 2N^{-\frac{1}{2}},\quad |\tau_{3}+(\a_{1}-\a_{2})N^{3}|\leq C_{1}N^{\frac{3}{2}}\}\]
	satisfies $B_{1}+B_{2}\subseteq -B_{3}$. 
	
	\bigskip
	\noindent {\bf Proof of Case (4).} 
	For large $N>0$, define 
	\begin{align*}
		B_{1}&=\big\{(\xi_{1},\tau_{1}):N-N^{-\frac{1}{2}}\leq \xi_{1}\leq N,\quad \big|\tau_{1}-\phi^{\a_1,\b}(\xi_1)\big|\leq 1\big\},\\
		B_{2}&=\big\{(\xi_{2},\tau_{2}):-2N-N^{-\frac{1}{2}}\leq \xi_{2}\leq -2N,\quad \big|\tau_{2}-\phi^{\a_2,\b}(\xi_2)\big|\leq 1\big\}.
	\end{align*}
	Then we choose a suitably large constant $C_1$ such that the set   
	\[B_{3}:=\{(\xi_{3},\tau_{3}):N\leq \xi_{3}\leq N+2N^{-\frac{1}{2}},\quad |\tau_{3}+2\a_{1}N^{3}+(\b-3\a_{1}N^{2})\xi_{3}|\leq C_{1}\}\]
	satisfies $B_{1}+B_{2}\subseteq -B_{3}$. 
	
	\bigskip
	\noindent {\bf Proof of Case (5).} 
	For large number $N>0$, define 
	\begin{align*}
		B_{1}&:=\big\{(\xi_{1},\tau_{1}):N-N^{-2}\leq \xi_{1}\leq N,\quad \big|\tau_{1}-\phi^{\a,\b}(\xi_1)\big|\leq 1\big\},\\
		B_{2}&:=\big\{(\xi_{2},\tau_{2}):-N-2N^{-2}\leq \xi_{2}\leq -N-N^{-2},\quad \big|\tau_{2}-\phi^{\a,\b}(\xi_2)\big|\leq 1\big\}.
	\end{align*}
	Then we choose a suitably large constant $C_1$ such that the set  
	\[B_{3}=\{(\xi_{3},\tau_{3}):N^{-2}\leq \xi_{3}\leq 3N^{-2},\quad |\tau_{3}+\b\xi_3|\leq C_{1}\}\]
	has the property $B_{1}+B_{2}\subseteq -B_{3}$. 
	
	\section*{Acknowledgements}
	The authors appreciate the anonymous referees for their careful review and helpful suggestions.

	
	{\small
		
	}
	
	\bigskip
	
	\centering
	
	\thanks{(X. Yang) Department of Mathematics, University of California, Riverside, CA 92521, USA\\
		Email: xiny@ucr.edu}
	
	\bigskip
	
	\thanks{(B.-Y. Zhang) Department of Mathematical Sciences, University of Cincinnati, Cincinnati, OH 45221, USA\\
		\quad Email: zhangb@ucmail.uc.edu}
	
\end{document}